\theoremstyle{plain}
 \newtheorem{thm}{Theorem}[section]
 \newtheorem{prop}{Proposition}[section]
 \newtheorem{cor}{Corollary}[section]
\theoremstyle{definition}
 \newtheorem{exm}{Example}[section]
 \newtheorem{remark}{Remark}[section]
\newtheorem{dfn}{Definition}[section]
\numberwithin{equation}{section}
\DeclareMathOperator{\diag}{diag}
\DeclareMathOperator{\grad}{grad}
\title[Regressions and Pencils of Confocal Quadrics]{Orthogonal and Linear Regressions and Pencils of Confocal Quadrics}
\author[V. Dragovi\'c, B. Gaji\'c]{\bfseries Vladimir Dragovi\'c and Borislav Gaji\'c}
\address{
Department of Mathematical Sciences  \\
The University of Texas at Dallas   \\
Richardson, TX\\
USA\\
Mathematical Institute\\
of the Serbian Academy of Sciences and Arts\\
Belgrade\\
Serbia}
\email{Vladimir.Dragovic@utdallas.edu}
\address{
Mathematical Institute\\
of the Serbian Academy of Sciences and Arts\\
Belgrade\\
Serbia}
\email{gajab@mi.sanu.ac.rs}
\subjclass[2010]{}
\keywords{
data ellipsoid;
confocal pencil of quadrics;
planar moments of inertia;
restricted regression;
regularization and shrinkage;
restricted PCA}
\begin{document}

\maketitle

\begin{abstract}
This paper enhances and develops bridges between statistics, mechanics, and geometry.
For a given system of points in $\mathbb R^k$ representing a sample of full rank, we construct an explicit pencil of confocal quadrics with the following properties:
(i) All the hyperplanes for which the hyperplanar moments of inertia for the given system of points are equal, are tangent to  the same quadrics
from the pencil of quadrics.  As an application, we develop regularization procedures for the orthogonal least square method, analogues of lasso and ridge methods from linear regression. (ii) For any given point $P$ among all the hyperplanes that contain it, the best fit is the tangent hyperplane to the quadric from the confocal pencil corresponding to the maximal Jacobi coordinate of the point $P$; the worst fit among the hyperplanes containing $P$ is the tangent hyperplane to the ellipsoid from the confocal pencil that contains $P$. The confocal pencil of quadrics provides a universal tool to solve the restricted principal component analysis restricted at any given point. Both results (i) and (ii) can be seen as generalizations of the classical result of Pearson on orthogonal regression. They have natural and important applications in the statistics of the  errors-in-variables models (EIV). For the classical linear regressions we provide a geometric characterization of hyperplanes of least squares in a given direction among all hyperplanes which contain a given point.  The obtained results have applications in restricted regressions, both ordinary and orthogonal ones. For the latter, a new formula for test statistic is derived. The developed methods and results are illustrated in natural statistics examples.
\end{abstract}

\section{Introduction}\label{sec:intro}
The aim of this paper is to further develop and enhance bridges between three disciplines: statistics, mechanics, and geometry.
More precisely, we will explore and employ links between quadrics, moments of inertia, and  regressions, in both the ordinary linear and the orthogonal settings. As it is well known and as it will be discussed later in detail, orthogonal regression plays a crucial role in Errors-in-Variables models (EIV), as seen in \cite{CR0,CR,Full}. The EIV models have various applications in different areas in science, as seen in \cite{AB,AFB,BGSY,CMM,Ke,MOW}.

This cross-fertilization between statistics, mechanics, and geometry appears to be beneficial for each of them. While individual quadrics have been in use in statistics, as reviewed in Section \ref{sec:bestfit} (see also \cite{FMF, Ro}), the novelty of this paper lies in the construction of a new object, a confocal pencil of quadrics, associated to a given data set, which we attest to be a powerful and universal tool to study the data.
More about confocal pencils of quadrics and associated Jacobi elliptic coordinates in the space of an arbitrary dimension $k$ is given in  Supplementary material, see \cite{Jac} and also e.g. \cite{ArnoldMMM}, \cite{DR}.

For the reader's sake, we will briefly present the basics of the pencil of conics and associated Jacobi elliptic coordinates in the plane. This corresponds to the case $k=2$ and should provide enough intuition for the case of an arbitrary number of dimensions.
Let an ellipse $\mathcal{E}$  be given in the plane by the equation:
$$
\mathcal{E}: \frac{x^2}{\alpha}+\frac{y^2}{\beta}=1,\quad \alpha>\beta>0.
$$
All the conics in the plane, sharing the same focal points with $\mathcal{E}$, form a pencil of confocal conics, ${\mathcal C}_{\lambda}$, defined by the equation:
\begin{equation}\label{eq:confocal2d}
{\mathcal C}_{\lambda}\ :\ \frac {{x}^2}{\alpha-\lambda}+ \frac{{y}^2}{\beta-\lambda }=1,
\end{equation}
see Figure \ref{fig:confocal}, where $F_1, F_2$ are the common focal points.

The parameter $\lambda$ is real, and each fixed $\lambda$ defines a conic ${\mathcal C}_{\lambda}$.
The family ${\mathcal C}_{\lambda}$ \eqref{eq:confocal2d} contains two types of nondegenerate conics: ellipses when $\lambda<\beta$, and hyperbolas when $\lambda\in(\beta,\alpha)$, see Figure \ref{fig:confocal}. There are also two degenerate conics in the confocal pencil: the $x$-axis for $\lambda=\beta$, and the $y$-axis for $\lambda=\alpha$.

Each point in the plane, $P(x_0,y_0)$, which is not a focus of the confocal pencil of conics (i.e. $P\notin \{F_1, F_2\}$), lies at exactly two conics $\mathcal C_{\lambda_1}$ and $\mathcal C_{\lambda_2}$ from \eqref{eq:confocal2d} -- one ellipse and one hyperbola, which are orthogonal to each other at the intersection point $P$,  see Figure \ref{fig:confocal}.

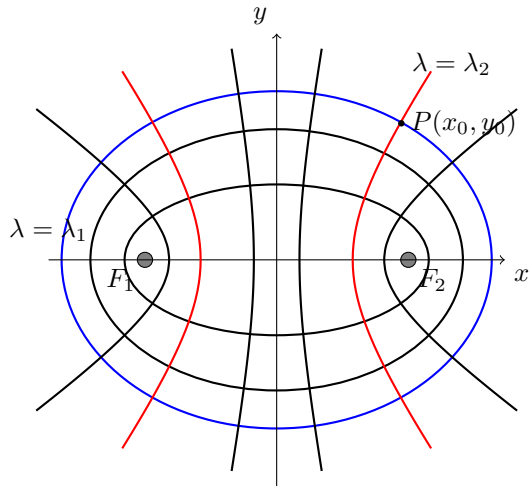
\begin{figure}[h] \centering
\begin{tikzpicture}

\draw[blue, thick](0,0) circle [x radius={sqrt(8)}, y radius={sqrt(5)}];
\draw[thick](0,0) circle [x radius=2, y radius=1];
\draw[thick](0,0) circle [x radius={sqrt(6)}, y radius={sqrt(3)}];

\draw[black,fill=gray]({sqrt(3)},0) circle [x radius=0.1,y radius=0.1] node[anchor=north west] {$F_2$};
\draw[black,fill=gray]({-sqrt(3)},0) circle [x radius=0.1,y radius=0.1] node[anchor=north east] {$F_1$};

\draw[domain=-2:2,smooth,thick,variable=\t] plot ({sqrt(2*(\t*\t+1))},{\t});
\draw[domain=-2:2,smooth,thick,variable=\t] plot ({-sqrt(2*(\t*\t+1))},{\t});

\draw[->] (-3,0) -- (3,0) node[anchor=north west] {$x$ };
\draw[->] (0,-3) -- (0,3) node[anchor=south east] {$y$};

\draw[red, domain=-2.5:2.5,smooth,thick,variable=\t] plot ({sqrt(1+\t*\t/2))},{\t});
\draw[red, domain=-2.5:2.5,smooth,thick,variable=\t] plot ({-sqrt(1+\t*\t/2))},{\t});

\draw[domain=-2.8:2.8,smooth,thick,variable=\t] plot ({0.3*sqrt(1+\t*\t/2.7))},{\t});
\draw[domain=-2.8:2.8,smooth,thick,variable=\t] plot ({-0.3*sqrt(1+\t*\t/2.7))},{\t});

\filldraw[black] (1.64,1.81) circle (1pt) node[anchor=west]{$P(x_0,y_0)$};

\node at (2.3,2.6){$\lambda=\lambda_2$};
\node at  (-3,0.4) {$\lambda=\lambda_1$};

\end{tikzpicture}
\caption{A confocal pencil of conics in plane; the Jacobi coorinates}
\label{fig:confocal}
\end{figure}

For a given point $P(x_0,y_0)$, the parameters $\lambda_1$ and $\lambda_2$ of the conics from the confocal pencil that contain $P$ are the solutions of the quadratic equation \eqref{eq:confocal2d} in $\lambda$, where the given Cartesian coordinates $x=x_0$ and $y=y_0$ of the point $P$ are known parameters. By definition, these $(\lambda_1, \lambda_2)$ are the Jacobi elliptic coordinates of the point $P$ associated with the given confocal pencil of conics.

 Jacobi (1804-1851) introduced Jacobi's elliptic coordinates in arbitrary dimension in the 26-th Lecture of his Course on Dynamics, delivered in K\"oningsberg in 1842/43.
The Jacobi Lectures \cite{Jac} were published posthumously by Clebsch in 1865 in German. Jacobi used these coordinates in Lecture 26 to solve one of the important open problems of the first half of XIX century-- to integrate the equations of geodesics  on an ellipsoid in $\mathbb R^k$. Jacobi opened the Lecture 26 with the famous lines:

\emph{``The main difficulty in integrating a given differential equation lies in introducing convenient variables, which there is no rule for finding. Therefore, we must travel the reverse path and after finding some notable substitution, look for problems to which it can be successfully applied."
(see also e.g. \cite{ArnoldMMM}, section 47, \cite{Jur}).}

Following this ideology of Jacobi, in the present paper, we search for problems in Statistics to which the Jacobi elliptic coordinates can be effectively applied.

In this paper, we will deal with data of full rank. We say that a given system of points in $\mathbb R^k$ is of full rank if these points are not contained in a hyperplane. Let us immediately formulate some of the main outcomes of this paper. For a given system of points in $\mathbb R^k$ of full rank, we construct an explicit pencil of confocal quadrics   with the following properties:

(i) All the hyperplanes for which the hyperplanar moments of inertia (defined in Section \ref{HMI}) for the given system of points are equal, are tangent to  the same quadrics
from the pencil of quadrics. See Section \ref{s4} and Theorem \ref{nd}.  As an application, we develop regularization procedures for the orthogonal least square methods, analogues of lasso and ridge methods from linear regression, see Section \ref{sec:regular}. This is based on a dual version
of Theorem \ref{nd}, which is given as Theorem \ref{dualnd}. Another motivation for this study comes from the gradient descent methods in machine learning. An optimization algorithm may not be guaranteed to arrive at the minimum in a reasonable amount of time. As pointed out in e.g. \cite{ML}, it often reaches some quite low value of the cost function,  equal to some value $s_0$, quickly enough to be useful. Here we deal with the hyperplanar moment as the cost function, in application to  orthogonal least squares. From Theorem \ref{nd}, we know that the hyperplanes which generate the hyperpanar moment equal to $s_0$ are all tangent to the given quadric from
the confocal pencil of quadrics, where the pencil parameter is determined through the value $s_0$.

(ii) For any given point $P$, among all the hyperplanes that contain it, the best fit is the tangent hyperplane to the quadric from the confocal pencil corresponding to the maximal Jacobi coordinate of the point $P$. The worst fit among the hyperplanes containing $P$ is the tangent hyperplane to the ellipsoid from the confocal pencil that contains $P$. See Theorem \ref{th:GenP1} from Section \ref{sec:direct}.

Both results (i) and (ii) can be seen as generalizations of the classical result of Pearson on orthogonal regression \cite{Pir}, or in other words, on the orthogonal least square method (see e.g. \cite{CB}). The original result of Pearson is stated below as Theorem \ref{th:P1} in Section \ref{sec:bestfit}. The original Pearson result also initiated Principal Component Analysis (PCA), see e.g. \cite{And}. Some of our results have a natural interpretation in terms of PCA.  The confocal pencil of quadrics provides a universal tool to solve  Restricted Principal Component Analysis, restricted at any given point, which we formulate and solve in Section \ref{sec:direct}, see Theorem \ref{th:principal}, Corollary \ref{cor:d23orthcond}, and Example \ref{exam2dim}.  Our generalizations of the Pearson Theorem have natural and important applications in the statistics of the measurement error models, for which  orthogonal regression is known to provide a natural framework, see \cite{CB}, \cite{Full}, \cite{CR}.

We also study classical linear regression from a geometric standpoint, and we do this in a coordinate-free form in Section \ref{sec:dirregel}.
For linear regressions, we provide a geometric characterization of hyperplanes of least squares in a given direction among all hyperplanes that contain a given point. In the case $k=2$, this is done in Theorem \ref{th:dirregcond}, and for  an arbitrary $k$ in Theorem \ref{th:dirregcondk}. All the hyperplanes with the same directional hyperplanar moment of inertia (equal $\mu$) form an ellipsoid.
When $\mu$ varies, the ellipsoids change within a homothetic family of ellipsoids, see Propositions \ref{prop:directhyper} and \ref{prop:homotetic}.

 The results we obtain have applications in restricted regressions, see e.g. \cite{SL} for ordinary linear regressions and see e.g. \cite{Full} and \cite{CR} for orthogonal regressions and  measurement error models. Restricted regressions arise in situations with prior knowledge, which have numerous applications, for example in economics and econometrics, see e.g. \cite{FHJ}. Restricted regressions also appear in tests of hypotheses,  see e.g. \cite{SL, FHJ}.  For precise assumptions about distributional conditions under which test statistics are valid see \cite{Full}. For example, consider a system of $N$ points in $\mathbb R^k$ with the centroid $C$ given. For  restricted orthogonal regression, we derive a new formula for the test statistic for the hypothesis that the hyperplane of the best fit contains a given point $P$:
 \begin{equation}\label{eq:Flambda0}
\frac{N}{N-k+1}(2\lambda_{k_C}-\lambda_{k_P}),
\end{equation}
 where $\lambda_{k_P}$ and $\lambda_{k_C}$ are the maximal Jacobi coordinates respectively of the given point $P$ and the centroid $C$, see Theorem \ref{th:Flambda}. Here, it is assumed that the Jacobi coordinates are induced by the confocal pencil of quadrics associated with the given system of $N$ points. The elegance of the above formula certifies the appropriateness of the geometric methods and tools developed here for their use in this statistics framework.  The developed methods and results are further illustrated in natural statistics settings, see e.g. Examples \ref{ex:cells} and \ref{ex:pressure}.

We will say that an ellipsoid is \emph{rotational} if at least two of its principal semiaxes coincide.

For the construction of the confocal pencil of quadrics associated to a given system of points,  we develop a  method based on the study of points for which the hyperplanar ellipsoid of inertia is  rotational. As we will see in Section \ref{HMI}, the hyperplanar ellipsoid of inertia at a point $O$ determines the moment of inertia for any hyperplane that contains $O$, and vice versa. Hence, the hyperplanar ellipsoid of inertia depends on a point $O$.
 Our construction of the confocal pencil of quadrics in the case of an arbitrary number of dimensions $k$ is studied in Section \ref{s4}, which eventually leads to our main result in this direction, Theorem \ref{nd}.

Let us explain the content of Theorem \ref{nd} in the simplest case of $k=2$ here, see Example \ref{ex:thk2}, see Fig. \ref{fig:2dthm}, and also \cite{DG2022a}. We start from a data set in the plane with full rank and the centroid $C$. We find two points, $F_1$ and $F_2$, symmetric with respect to $C$, that have a circle as the ellipse of inertia for the given data set. The question is what the points $F_1$ and $F_2$ tell us about the initial data set. To answer that question, we construct the pencil of confocal conics with foci $F_1$ and $F_2$. We discover that the pencil of confocal conics has the following property with respect to the initial data set: \emph{the lines in the plane that have the same moment with respect to the given data set are all tangent to the same conic from the confocal pencil of conics.}

 Our  method of constructing the confocal pencil of quadrics in arbitrary dimension $k$ associated with a given data set generalizes these planar considerations. It searches for the points
where the ellipsoid of inertia is  rotational and also takes into account the subtle challenges in generalizing the notion of focal points in the plane to spaces of higher dimensions.  The obtained confocal pencil of quadrics does not contain either the hyperplanar ellipsoid of inertia or any of gyrational
hyperplanar ellipsoids of inertia. In statistics terminology, the confocal pencil of quadrics does not contain either the data ellipsoid or the ellipsoid of residuals.  However, the confocal pencil does contain a specially normalized axial ellipsoid of gyration. This normalization is not known {\it a priori} and comes only as a consequence of applying our method described above. Let us also note that the idea of considering the points
where the ellipsoid of inertia is  rotational is more typical for mechanics than for statistics. A dominant practice in statistics is to consider ellipsoids of data or residuals exclusively centered at the centroid.
The cases of given data of lower rank can be treated similarly, considering subspaces of smaller dimension.

\section{Lines and hyperplanes of the best fit to the data set in $\mathbb R^k$}\label{sec:bestfit}

Nowadays, when we are witnessing the explosion of data science it is important to recall the heroic days of formation of statistics and its mathematical foundations.
Ellipses, as two-dimensional quadrics, made a great entrance in statistics in 1886 in the work of Francis Galton \cite{Gal}, which marked
   the birth of the law of regression.

Galton studied the hereditary transmissions and in \cite{Gal} he focused on the height. He collected the data of 930 adult children and 205 of their respective parentages. To each pair of parents he assigned a ``mid-parent" height, as a weighted average of the heights of the parents.
He established the average regression from mid-parent to offsprings and from offsprings to mid-parent. He formulated the law of regression toward
mediocrity:

{\it When Mid-Parents are taller than mediocrity, their Children tend to be shorter than they. When Mid-Parents are shorter than mediocrity, their Children tend to be taller than they.}

This is how the term {\it regression}, thanks to Galton, entered into statistics, although the method of least squares which is in the background, existed in mathematics from the beginning of the XIX century and works of Gauss and Legendre.

In the course of his investigation, Galton discovered a remarkable role of ellipses in analysis of such data. Galton explained in a very nice manner his line of thoughts and actions. We present that in his own words:

``...{\it I found it hard at first to catch the full significance of the entries in the table...They came out distinctly when I `smoothed' the entries by writing at each intersection of a horizontal column with a vertical one, the sum of entries of four adjacent squares... I then noticed that lines drawn through entries of the same value formed a series of concentric and similar ellipses. Their common center ... corresponded to $68\frac{1}{4}$ inches. Their axes are similarly inclined. The points where each ellipse in succession was touched by a horizontal tangent, lay in a straight line inclined to the vertical in the ratio of $\frac{2}{3}$; those where they were touched by a vertical tangent
lay in a straight line inclined to the horizontal in the   ratio of $\frac{1}{3}$. These ratios confirm the values of average
regression already obtained by a different method, of $\frac{2}{3}$ from mid-parent to offspring, and of $\frac{1}{3}$
from offspring to mid-parent...These and other relations were evidently a subject for mathematical analysis and verification...I noted these values and phrased the problem in abstract terms such as a competent mathematician could deal with, disentangled from all references to heredity, and in that shape submitted it to Mr. Hamilton Dixson, of St. Peter's College, Cambridge...

I may be permitted to say that I never felt such a glow of loyalty and respect towards the sovereignty
and magnificent sway of mathematical analysis as when his answer reached me, confirming, by purely mathematical reasoning, my various and laborious statistical conclusions with far more minuteness than I had dared to hope... His calculations corrected my observed value of mid-parental regression
from $\frac{1}{3}$ to $\frac{6}{17.6}$, the relation between the major and minor axis of the ellipse was changed 3 per cent. (it should be as $\frac{\sqrt 7}{\sqrt 2}$)...}"

\begin{figure}[h]\centering
\includegraphics[width=4cm,height=4cm]{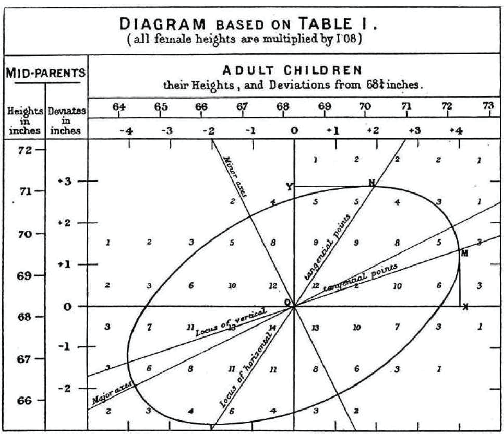}
\caption{From \cite{Gal}.}
\label{fig:Galton}
\end{figure}

In his seminal paper \cite{Pir}, one of the founding fathers of modern statistics, Karl Pearson, investigated the question
of the hyperplane which minimizes the mean square distance from a given set of points in $\mathbb R^k$, for any $k\ge 3$.
In his own words, Pearson formulated the problem: {\it ``In the case we are about to deal with, we suppose
the observed variables--all subject to error--to be plotted
in the plane, three-dimensioned or higher space, and we endeavour
to take a line (or plane) which will be the `best fit' to such
a system of points.
Of course the term `best fit' is really arbitrary; but a
good fit will clearly be obtained if we make the sum of the
squares of the perpendiculars from the system of points upon
the line or plane a minimum."}

\begin{figure}[h]\centering
\begin{minipage}{0.49\textwidth}\centering
\includegraphics[width=4cm,height=3cm]{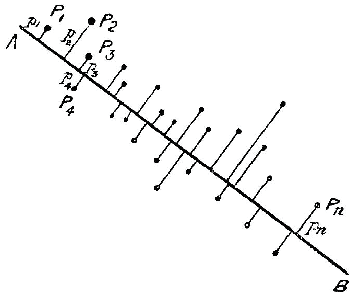}
\caption{From \cite{Pir}.}
\end{minipage}
\begin{minipage}{0.49\textwidth}\centering
\includegraphics[width=3cm,height=3cm]{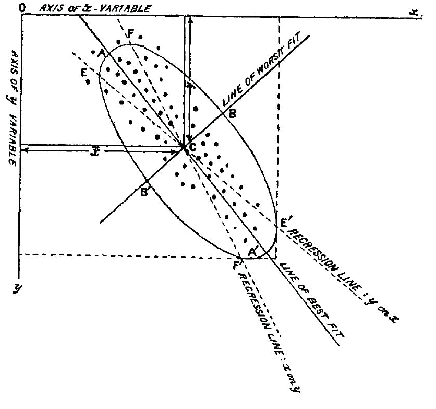}
\caption{From \cite{Pir}.}
\label{fig:Pir1}
\end{minipage}
\end{figure}

Thus, the notion of the best fit is not uniquely determined.  In statistics,  the choice of the squares of the perpendiculars is natural in the measurement error models, or in other words in regression with errors in variables (EIV), \cite{CB}. This corresponds to Pearson's above remark
``we suppose the observed variables--all subject to error".
In such models it is assumed that both predictors and responses are known with some error. In contrast to that, in the classical linear regressions, mentioned above, it is assumed that only responses are known with some error, while for the predictors the exact values are known. Thus,
the squares of distances along one of the axes  is in use in such classical regression models, see also Section \ref{sec:dirk}.
We will call that \emph{directional regression} in a given direction. We will talk more about its geometric aspects in the last Section \ref{sec:dirregel}.

Let us make a more rigorous definitions of the classical simple linear regression model and regression with error in variables (EIV) models, \cite{CB}. For classical simple regression model, it is assumed that the values $(x^{(i)})_{i=1}^N$ are known, fixed values, as for example values set up in advance in the experiment. The values $(y^{(i)})_{i=1}^N$ are observed values of uncorrelated random variables $Y_i$, $i=1, \dots, N$ with the same variance $\sigma^2$.
There is a linear relationship assumed between the predictors $x^{(i)}$ and responses  $(y^{(i)})_{i=1}^N$:
$
EY_i=\alpha +\beta x^{(i)}, \quad i=1, \dots, N.
$
This can be restated as
$
Y_i=\alpha +\beta x^{(i)} + \epsilon_i, \quad i=1, \dots, N,
$
where $\epsilon_i$ are called {\it the random errors} and they are uncorrelated random variables with zero expectation and the same variance $\sigma^2$. In such models the regression is of $Y$ on $x$, i.e. in the vertical direction, see  Example \ref{ex:slr}.
This model was in the background of the
Galton study \cite{Gal}, mentioned above.

There are also important situations where predictors are known only up to some error and they are described by measurement error models. There
the observed pairs $(x^{(i)}, y^{(i)} )_{i=1}^N$ are sampled from random variables $(X_i, Y_i)$ with means satisfying the linear relationship
$
EY_i=\alpha +\beta(EX_i), \quad i=1, \dots, N.
$
Denoting $EX_i=\xi_i$, the errors in variables model can be defined as
$
Y_i=\alpha +\beta\xi_i +\epsilon_i, \quad X_i=\xi_i + \delta_i, \quad i=1, \dots, N,
$
where both $X_i$ and $Y_i$ have error terms which belong to mean zero normal distributions, such that all $\epsilon_i$, $i=1, \dots, N$ have the same variance $\sigma^2_{\epsilon}$ and all $\delta_i$, $i=1, \dots, N$ have the same variance $\sigma^2_{\delta}$. Since in such models
there is a symmetry between $x_i$ and $y_i$ as they are both known with an error, it is more natural to apply to them the orthogonal regression,
or in other words, the orthogonal least square method. This we are going to introduce next under the assumption that $\eta= \sigma^2_{\epsilon}/ \sigma^2_{\delta}=1$   and for an arbitrary numbers of dimensions $k$. One should also mention that applications of the orthogonal least square method in the models with measurement errors have limitations, see e.g. \cite{CR}, \cite{CR0}. These limitations are related to a potentially unknown value of $\eta$. Here we assume that $\eta$ {\it is known}. At first we assume that $\eta=1$. The assumption that $\eta$ is known is essential, while that it is equal to one is not.  If $\eta$ is known, but not equal to 1, it can be made equal to 1 by rescaling either $X$ or $Y$. See also Remark \ref{rem:errorcov}.

From a historic perspective, the case $\eta=1$ originated from \cite{Ad1, Ad2}. Then it was Pearson who established
 \emph{orthogonal regression}  by selecting the squares of the perpendiculars, which corresponds to the case $\eta=1$. Nowadays, it is also called {\it the orthogonal least square method}, see e.g. \cite{CB}, as mentioned above. The geometric aspects of the orthogonal regression are the main subject of this study. We also adopt the Pearson's
generality assumption, that the given system of points does not belong to a hyper-plane i.e. a system of points is of full rank.

To fix the idea, suppose the system of $N$ points $(x_1^{(i)}, x_2^{(i)}, \dots, x_k^{(i)})_{i=1}^N$ is given. Define \emph{the centroid}, or the mean values of the coordinates $\bar x_j$ and \emph{the variances} $
\sigma^2_{x_j}$:
$$
\bar x_j=\frac{1}{N}\sum_{i=1}^Nx_j^{(i)},\ \  \sigma^2_{x_j}=\frac{1}{N}\sum_{i=1}^N(x_j^{(i)}-\bar x_j)^2, \ \  j=1, \dots, k.
$$
Due to the  full rank assumption, all $\sigma^2_{x_j}$, for $j=1, \dots, k$ are non-zero.
Then, \emph{the correlations} $r_{jl}$ and \emph{the covariances} $p_{jl}$ are
$$
r_{jl}=\frac{p_{jl}}{\sigma_{x_j}\sigma_{x_l}},\, p_{jl}=\frac{1}{N}\sum_{i=1}^N(x_j^{(i)}-\bar x_j)(x_l^{(i)}-\bar x_l), j,l=1, \dots, k, l\ne j.
$$
\emph{The covariance matrix} $K$ is a $(k\times k)$ matrix with  diagonal elements
$
K_{jj} = \sigma^2_{x_j}, j=1, \dots, k,
$
and  off-diagonal elements
$
K_{jl}=p_{jl}, j,l=1, \dots, k, l\ne j.
$
The covariance matrix is always symmetric positive semidefinite. However, in this case, we have more: $K$ is a positive-definite matrix due to the full rank assumption. In particular, it has the inverse $K^{-1}$ and all its eigenvalues are positive. Assuming, as it is customary in statistics, that the origin of the Cartesian coordinate system coincides with the centroid,
Pearson defined \emph{the ellipsoid of residuals} by the equation
\begin{equation}\label{eq:residual}
\sum_{j,l=1}^kK_{jl}x_jx_l=const.
\end{equation}
 As we will mention at the end of this Section, the Pearson ellipsoid of residuals is dual to the data ellipsoid.

\begin{remark}\label{pirrem}
Pearson observed that the radius of the ellipsoid of residuals in any direction is equal to the inverse of the standard deviation of the orthogonal projection of the points onto a
 hyperplane that is orthogonal to the line in the direction of the given radius.
\end{remark}

Denote the eigenvalues of $K$ as $\mu_1\ge\dots\ge\mu_k>0$.
\begin{thm}\label{th:P1}[Pearson, \cite{Pir}] The minimal mean square distance from a hyperplane to the given set of $N$ points
is equal to the minimal eigenvalue of the covariance matrix $K$. A best-fitting hyperplane contains the centroid and it is
orthogonal to the corresponding eigenvector of $K$. Thus, it is the principal coordinate hyperplane of the ellipsoid of residuals which is normal to
the major axis.
\end{thm}

Then Pearson studied the lines which best fit to the given set of points and proved
\begin{thm}\label{th:P2}[Pearson, \cite{Pir}]
The line which fits best the given system of $N$ points contains the centroid and coincides with the minor axis of the ellipsoid of residuals.
\end{thm}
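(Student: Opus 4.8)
The plan is to parametrize a line in $\R^k$ by a point $a$ through which it passes and a unit direction vector $v$, $|v|=1$, and to write the quantity to be minimized, namely the sum of squared perpendicular distances, as
\begin{equation*}
S(a,v)=\sum_{i=1}^N\Big(|X^{(i)}-a|^2-\langle X^{(i)}-a,v\rangle^2\Big),
\end{equation*}
where $X^{(i)}=(x_1^{(i)},\dots,x_k^{(i)})$ and the $i$-th summand is exactly the squared length of the component of $X^{(i)}-a$ orthogonal to $v$. Minimizing $S$ jointly over $a$ and over unit vectors $v$ gives the line of best fit.

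First I would pass to centered coordinates $Y^{(i)}=X^{(i)}-\bar X$, with $\bar X=(\bar x_1,\dots,\bar x_k)$ the centroid, and set $b=\bar X-a$. Since $\sum_{i}Y^{(i)}=0$, all cross terms vanish and $S$ splits as
\begin{equation*}
S(a,v)=\Big(\sum_{i=1}^N|Y^{(i)}|^2-\sum_{i=1}^N\langle Y^{(i)},v\rangle^2\Big)+N\big(|b|^2-\langle b,v\rangle^2\big).
\end{equation*}
The second bracket is nonnegative and, because $|v|=1$, vanishes precisely when $b$ is parallel to $v$, that is, when the line $(a,v)$ passes through the centroid $\bar X$. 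This is the step that pins the best-fitting line to the centroid, and it is the only place where the coupling between the unknown base point $a$ and the unknown direction $v$ has to be disentangled; I expect this to be the main (though modest) obstacle, since it is what separates the honest coordinate-free argument from a naive single-variable optimization.

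Once the line is known to pass through the centroid, only the first bracket remains, and as $\sum_{i}|Y^{(i)}|^2$ is constant, minimizing $S$ is equivalent to maximizing
\begin{equation*}
\sum_{i=1}^N\langle Y^{(i)},v\rangle^2=v^{T}\Big(\sum_{i=1}^N Y^{(i)}(Y^{(i)})^{T}\Big)v=N\,v^{T}Kv,
\end{equation*}
where I would check that $\tfrac1N\sum_{i}Y^{(i)}(Y^{(i)})^{T}$ is exactly the covariance matrix $K$ defined above, its diagonal entries being the $\sigma^2_{x_j}$ and its off-diagonal entries the $p_{jl}$. By the spectral theorem and the Rayleigh quotient, the maximum of $v^{T}Kv$ over unit vectors equals the largest eigenvalue $\mu_1$, attained at a corresponding unit eigenvector. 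Hence the best-fitting line is the line through the centroid in the direction of the eigenvector of $\mu_1$.

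Finally I would identify this direction geometrically. The ellipsoid of residuals $\sum_{j,l}K_{jl}x_jx_l=\mathrm{const}$ has principal axes along the eigenvectors of $K$, with the semi-axis in the eigendirection of $\mu_i$ proportional to $1/\sqrt{\mu_i}$; therefore the largest eigenvalue $\mu_1$ corresponds to the shortest principal axis, i.e. the minor axis. Since the centroid is the center of the ellipsoid of residuals, the best-fitting line through $\bar X$ in the eigendirection of $\mu_1$ coincides with the minor axis, which is the assertion of the theorem. This also makes transparent the duality with Theorem \ref{th:P1}: the best-fitting hyperplane is normal to the eigenvector of the smallest eigenvalue (the major axis), whereas the best-fitting line follows the eigenvector of the largest eigenvalue (the minor axis).
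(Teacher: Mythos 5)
Your proof is correct. Note that the paper itself does not prove Theorem \ref{th:P2}: it is quoted as Pearson's classical result from \cite{Pir}, and the closest the paper comes to a proof is the corollary of the Huygens--Steiner theorem in Section \ref{sec:mech3} (which gives the centroid part) together with the later generalization, Theorem \ref{th:ellbestfit}, whose case $\ell=1$ recovers this statement in the language of $\ell$-planar moments. Your argument is essentially the same computation packaged differently: your single decomposition of $S(a,v)$ into a centered term plus $N\bigl(|b|^2-\langle b,v\rangle^2\bigr)$ is exactly a Huygens--Steiner step (the $md^2$ correction, with $d^2=|b|^2-\langle b,v\rangle^2$ the squared distance from the centroid to the line), and your Rayleigh-quotient step over unit $v$ is the diagonalization of the inertia/covariance operator that the paper performs via principal axes. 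What your version buys is self-containedness and a direct identification with the statistical covariance matrix $K$, making the duality with Theorem \ref{th:P1} (largest versus smallest eigenvalue, minor versus major axis of the ellipsoid of residuals) explicit; what the paper's route buys is the uniform treatment of all intermediate dimensions $1\le\ell\le k-1$ at once. One pedantic point: the second bracket also vanishes when $b=0$, i.e.\ $a=\bar X$, but that case trivially gives a line through the centroid, so your conclusion stands.
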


Pearson integrated the visualization of  linear regression with  orthogonal regression in the planar case in \cite{Pir}, as shown in
 Fig. \ref{fig:Pir1}. The ellipse in Fig. \ref{fig:Pir1} is dual to the ellipse of residuals, coinciding with the object
studied by Galton. The main goals of this paper are to generalize the aforementioned  classical results of Pearson in the following directions.

{\bf The first goal:} {\it For a given system of $N$ points in $\mathbb R^k$, where $k\ge 2$, and under the  full rank assumption, we consider
all hyperplanes which equally fit  the given system of points. In other words, for any fixed value $s$ that is not less than the smallest eigenvalue $\mu_k$
 of the covariance matrix $K$, we consider all hyperplanes for which the mean sum of square distances to the given set of points is equal to $s$.
Starting from the ellipsoid of residuals,  we  effectively construct a pencil of confocal quadrics with the following property:
For each $s\ge \mu_k$ there exists a quadric from the confocal pencil that serves as the envelope of all the hyperplanes which $s$-fit the given system of points.}

 We stress that  neither the ellipsoid of residuals nor its dual, the data ellipsoid,  belongs to the confocal family of quadrics. The construction of this confocal pencil of quadrics is fully effective, though quite involved.  The obtained pencil
of confocal quadrics is going to have the same center as the ellipsoid of residuals, and moreover, the same principal axes.

\begin{exm}\label{exm:Pearsing} Let us recall that $\mu_k$ denotes the smallest eigenvalue of the covariance matrix $K$. In the case $s=\mu_k$, there is only one hyperplane that fits $s$ to the given set of $N$ points. This is the best-fitting hyperplane described in Theorem \ref{th:P1}. The envelope of this single hyperplane is the hyperplane itself. This hyperplane is going to be a degenerate
quadric from our confocal pencil of quadrics.
\end{exm}

{\bf The second goal:} {\it For a given system of $N$ points in $\mathbb R^k$, where $k\ge 2$, and under the  full rank assumption, find the best fitting hyperplane  under the condition that it contains a selected point in $\mathbb R^k$. We also provide an answer to the questions of the best fitting line
under the condition that it contains a given point.}

A careful look at  Galton's figure (see Fig. \ref{fig:Galton}) reveals an intriguing geometric fact that the line of linear regression
of $y$ on $x$ intersects the ellipse at the points of vertical tangency, while the line of linear regression of $x$ on $y$ intersects the ellipse
at the points of horizontal tangency. Further analysis of this phenomenon leads us to our third goal.

 {\bf The third goal:} to study linear regression in $\mathbb R^k$ in a coordinate free, invariant, form, see Theorem \ref{th:dirregcondk} and Corollary \ref{cor:k2dirreg}. We
address the following question: {\it for a given direction and a given system of $N$ points under the generality assumption, what is the best fitting hyperplane in  the given direction among those that contain a selected point in $\mathbb R^k$?
}

 Apparently, the second and the third goal are addressed using the same confocal pencil of quadrics, which is constructed in relation to the first goal and mentioned above.

 Let us conclude the introduction by observing that the ellipsoids reciprocal to the Pearson ellipsoid of residuals, known as \emph{the data ellipsoids},  have been studied in statistics, as seen in \cite{Cr}. Detailed explanation of the data ellipsoid and the concentration ellipsoid will be given in Section \ref{sec:dict}.

The notions of moments in statistics came from mechanics, where they were originally introduced in  three-dimensional space.
We review these basic notions from a mechanics perspective in the Supplementary material.

\section{The envelopes of hyperplanes which equally fit the data in $k$-dimensional case}\label{s4}

In this Section, we construct a pencil of confocal quadrics associated to a system of points in $\mathbb R^k$.
This pencil of confocal quadrics is going to be instrumental in achieving of all three of our main goals as listed in Section \ref{sec:bestfit}.
In Section \ref{sec:kconstruction}, we present two main steps in our  method of constructing the confocal pencil of quadrics: starting from a system of points, we consider the central hyperplanar ellipsoid of inertia to which
we {\it attach} the points for which the ellipsoid of inertia is  rotational, as per Definition \ref{dfn:attach}. Then, using the obtained list of the attached points, we {\it assign to} them a confocal pencil of quadrics, in accordance with Definition \ref{def:assigned}.
This eventually leads to the main result in this direction, Theorem \ref{nd}.

In Section \ref{sec:regular}, we deal with the first applications of Theorem \ref{nd} in the regularization for orthogonal regression and in the gradient descent method in machine learning. The initial step is a dual formulation of Theorem \ref{nd}, which is given as Theorem \ref{dualnd}.
Following that, methods of regularization for orthogonal regression,  analogues to the lasso and  ridge for linear regression, are presented.

 We initiate this Section by recalling the basic notions related to hyper-planar moments of inertia and their corresponding operators.

\subsection{Hyperplanar moments of inertia}\label{HMI}

In rigid body dynamics,  axial moments of inertia play an important role (see e.g. \cite{ArnoldMMM}).
Let us recall that for a given line $u\subset \mathbb R^3\{x,y,z\}$, \emph{the axial moment of inertia} $I_u$ is defined as $I_u=\sum\limits_{j=1}^{N}m_jd_j^2$,
where $d_j$ is the distance from the point $M_j$ to the line $u$, for $j=1, \dots, N$.

However, here we are studying the hyperplanar moments of inertia. The details about the axial moments of inertia, their definition in higher dimensions, as well as their connection with the hyperplanar moments of inertia, are provided in the
Supplement material, see also \cite{DG2022a}.

Now we pass to hyperplanar moments of inertia.

Let the system of points $M_1,...,M_N$ with masses $m_1,...,m_N$ be given in $\mathbb{R}^k$. Consider a hyperplane $\pi$ in the same space $\mathbb{R}^k$. The hyperplanar moment of inertia for the system of points for the hyperplane $\pi$ is, by definition:
$
J_{\pi}=\sum\limits_{i=1}^Nm_id_i^2,
$
 where $d_i$ is the  perpendicular distance form the $i$-th point to the hyperplane.
  The Huygens-Steiner Theorem (see e.g. \cite{ArnoldMMM}) gives a relationship between the axial moments of inertia for two parallel axes, where one of them contains the center of masses.
In the case of hyperplanar  moments of inertia, a generalization of the Huygens-Steiner
theorem can be formulated as follows:
$J_{\pi}=J_{\pi_1}+md^2,$
where it is assumed that the hyperplane $\pi_1$ contains the center of masses, while $\pi$ is a hyperplane parallel to $\pi_1$ at the distance $d$.
Here $m$ is the total mass of the system of points. The hyperplanar operator of inertia at the point $O$ is defined here as a $k$-dimensional symmetric operator as follows:
$
\langle J_O\mathbf{n_1},\mathbf{n_2}\rangle=\sum\limits_{j=1}^{N}m_j\langle\mathbf{r_j},\mathbf{n_1}\rangle\langle\mathbf{r_j},\mathbf{n_2}\rangle,
$
where $\mathbf{r_j}$ is the radius vector of the point $M_j$.
We see that
$
J_\pi=\langle J_O\mathbf{n},\mathbf{n}\rangle,
$
where $\mathbf{n}$ is the unit vector orthogonal to the hyperplane $\pi$ which contains $O$.  The diagonal elements of the hyperplanar inertia operator are the moments of inertia for the coordinate planes: $J_{ii}=\sum_{j=1}^Nm_j(x_{i}^{(j)})^2$ is the hyperplanar moment of inertia for the plane orthogonal to coordinate axis $Ox_i$, where $x_{i}^{(j)}$ is the $i$-th coordinate of point $M_j$.  The nondiagonal elements of the hyperplanar inertia operator are also called \emph{the centrifugal hyperplanar moments of inertia}: $J_{il}=\sum_{j=1}^Nm_jx_{i}^{(j)}x_{l}^{(j)}$.

In a principal basis, i.e. one  where the hyperplanar operator of inertia has a diagonal form, the diagonal elements are called \emph{the principal hyperplanar moments of inertia}, and they are the hyperplanar moments of inertia for  the coordinate
hyperplanes:
$
J_1=\sum\limits_{j=1}^Nm_j(x_{1}^{(j)})^2,\, J_2=\sum\limits_{j=1}^Nm_j(x_{2}^{(j)})^2,\,\dots,\, J_k=\sum\limits_{j=1}^Nm_j(x_{k}^{(j)})^2
$. (In principal coordinates the centrifugal moments are zero).
The hyperplanar ellipsoid of inertia at the point $O$ is the ellipsoid
$$
\langle J_Ou,u\rangle=1,\quad u\in \mathbb{R}^k.
$$
 Let $X$ be a point on the hyperplanar ellipsoid of inertia centered at $O$. Let $\pi$ be a hyperplane that contains $O$ and is orthogonal to $OX$. From the definition of the hyperplanar moment of inertia, it follows that:
\begin{equation}\label{eq:jpi}
J_{\pi}=\langle J_O\frac {\mathbf{OX}}{||\mathbf{OX}||}, \frac{\mathbf{OX}}{||\mathbf{OX}||}\rangle=\frac{1}{||\mathbf{OX}||^2}.
\end{equation}
Thus, the reciprocal value of the square of the distance $OX$  is equal to the hyperplanar moment of inertia for the hyperplane that contains $O$ and that is orthogonal to $OX$. This implies that the ellipsoid of inertia at a point $O$ completely determines the moments of inertia at  point $O$. This also means that the hyperplanar ellipsoid of inertia depends on $O$.

In principal coordinates, the hyperplanar ellipsoid of inertia at  point $O$ has the form  $J_1x_1^2+...+J_kx_k^2=1$. The ellipsoid given by
$$
\langle J^{-1}_Ou,u\rangle=1, u\in \mathbb{R}^k.
$$
is referred to us as \emph{the  hyperplanar ellipsoid of gyration  at  point $O$}.

We also introduce the following definition: the ellipsoid given by
$$\langle J^{-1}_Ou,u\rangle=m^{-1}, u\in \mathbb{R}^k$$
is called \emph{the mass normalized  hyperplanar ellipsoid of gyration  at point $O$}, where $m$ is the sum of masses of all points from the given dataset.

 An important special case is when point $O$ coincides with the center of masses i.e. when $O$ is the centroid $C$. Then, the hyperplanar operator of inertia at $C$ is called \emph{the central hyperplanar operator of inertia}, the principal hyperplanar moments of inertia at $C$ are \emph{the central principal hyperplanar moments of inertia}, and the hyperplanar ellipsoid of inertia at point $C$ is \emph{the central hyperplanar ellipsoid of inertia}. Similarly, the hyperplanar ellipsoid of gyration at  point $C$ and the mass normalized hyperplanar ellipsoid of gyration at  point $C$ is called \emph{the central hyperplanar ellipsoid of gyration} and \emph{the central mass normalized hyperplanar ellipsoid of gyration}, respectively.

\
\subsection{Planar case and axial moments}\label{sec:planaraxial}

The planar case ($k=2$) is specific since the axial and planar moments of inertia are both defined with respect to lines. The axial moment of inertia and the planar moment of inertia for a line $\ell$ in the plane coincide.  By definitions, they are the sum of the products of the masses of points $M_i$ and the square of the distances from the points $M_i$ to $\ell$. However, the associated inertia operators are not the same: one can check that in the standard basis of  Cartesian coordinates, the matrix of the planar inertia operator is the adjugate matrix of the matrix of the axial inertia operator. Hence, for example, one has that $I_{11}=J_{22}$. Thus, the axial moment of inertia for $Ox$-axis is $I_{11}=\sum_{i=1}^N m_i y_i^2$, and the planar moment of inertia for $O x$-axis is $J_{22}=\sum_{i=1}^N m_i y_i^2$.

In the $k$-dimensional case, one can define the intermediate  moments of inertia of $s$-dimensional planes for $1\le s <k$ as well. They are studied in \cite{DG2022a}. Specifically, for $s=1$, one gets an axial moment of inertia for the lines in $k$-dimensional space see Supplement material, Section 1. The hyperplanar moments of inertia correspond to the case $s=k-1$.

\subsection{A dictionary between mechanical and statistical terminologies}\label{sec:dict}

In order to make the exposition of the paper more accessible to both statistically and mechanically oriented communities, we will provide a brief dictionary in the following, interrelating the notions from Sections \ref{sec:bestfit} and \ref{HMI}.  To that end, we will suppose here that all $m_i$ for $1\leq i \leq N$ are equal (by setting $m_i=1$ for $1\leq i \leq N$ or $m_i=1/N$ for $1\leq i \leq N$). Then the mechanical terms discussed above have their statistical counterparts. We should also note though that having $m_i$ not all being equal also carries significance not only in mechanics but in statistics as well, where also the concept of weights is sometimes applied.

 The matrix of inertia of the operator $J_O$   coincides with the well-known notion in statistics of SSCP (sum of squares and cross-products) of the radius vectors. The  principal moments of inertia are its eigenvalues.
 If $O$ coincides with the centroid $C$, then the matrix of the central hyperplanar inertia operator
is the covariance matrix $K$, and the central hyperplanar ellipsoid of inertia is the ellipsoid of residuals.  The data ellipsoid  becomes the hyperplanar ellipsoid of gyration. The hyperplanar moment of inertia $J_{jj}$ for the coordinate hyperplane orthogonal to the axis $Ox_j$ is the variance $\sigma_{x_j}^2$, while the centrifugal hyperplanar moments of inertia $J_{ij}$ are covariances $p_{ij}$. In a central principal coordinate system, one gets that the central principal moments of inertia $J_{1},...,J_{k}$ coincide
with the eigenvalues of $K$, $\mu_k,...,\mu_1$.
If a hyperplane $\pi$ contains $C$, then the hyperplanar moment of inertia $J_{\pi}$ is the sum of the squared distances from the points of the given system to the hyperplane $\pi$. In the statistical terminology, it represents the variance of the orthogonal projection onto $\pi$. Thus, one can view Pearson's note given in Remark \ref{pirrem} as a statistical
interpretation of the relation \eqref{eq:jpi}.

 In \cite{Cr} the concentration ellipsoid for a given system of points in $\mathbb{R}^k$ is defined as the ellipsoid with the following property: the uniform distribution inside the concentration ellipsoid has the same first and second order moments as the given system of points. The concentration ellipsoid has the equation \cite{Cr}:
 $$
\sum_{j,l=1}^kK^{-1}_{jl}x_jx_l=k+2.
$$
When the constant $k+2$ is replaced with an arbitrary positive constant $c$, a family of homothetic ellipsoids is obtained. This family coincides with the ellipsoids homothetic to the hyperplanar ellipsoid of gyration defined below (see\cite{Cr} for the planar case). In the planar case when $k=2$ the ellipsoid of concentration becomes the ellipse of concentration  $\sum_{j,l=1}^2K^{-1}_{jl}x_jx_l=4$. As explained in Section \ref{sec:planaraxial}, the planar ellipse of inertia and axial ellipse of inertia are dual to each other: the matrix of the planar inertia operator is the adjugate matrix of the matrix of the axial inertia operator. In the planar case, the family of ellipses homothetic to the concentration ellipse coincides with the family of ellipses homothetic to the axial ellipses of inertia, as it is pointed out in \cite{Cr}.
In our paper, the ellipsoid from the family of homothetic ellipsoids, given by
 $$
\sum_{j,l=1}^kK^{-1}_{jl}x_jx_l=1.
$$
will be also referred as the data ellipsoid.

 The above Pearson results can be naturally restated in terms of the data ellipsoid as well. In particular, in the two-dimensional case, the Galton ellipses
 are examples of data ellipsoids,  and they are, as we mentioned above, dual to ellipses of residuals of Pearson. The data ellipsoids in the two-dimensional case are naturally called the data ellipses; they are presented in Fig. \ref{fig:Galton} and \ref{fig:Pir1}.

\

\subsection{Construction of the confocal pencil of quadrics associated to the data in $\mathbb R^k$}\label{sec:kconstruction}

A confocal pencil of conics in plane is fully determined by two points: the common focal points of all the conics from the pencil.
It is quite challenging to present any  analogous  ``focal set" in arbitrary dimension that would geometrically define a confocal pencil of quadrics through some sort of a rope construction (see   \cite{DG2022a} and references therein). For our purposes, we consider the following set (\cite{DG2022a}).

\begin{dfn}\label{def:assigned}
Let a $(k-1)$-dimensional ellipsoid be given by
\begin{equation}\label{kdimelipsoid}
\frac{x_1^2}{\alpha_1}+...+\frac{x_k^2}{\alpha_k}=1,
\end{equation}
where $\alpha_1>...>\alpha_k>0$. We say that the set of
 $2k-2$ collinear \emph{generalized focal points}
 $
 F_{1i}(\sqrt{\alpha_1-\alpha_i},\allowbreak 0,...,0)$, $F_{2i}(-\sqrt{\alpha_1-\alpha_i},0,...,0)$, $i=2,...,k$,
 organized in $(k-1)$ pairs of points symmetric with respect to one given point is \emph{assigned to} the ellipsoid \eqref{kdimelipsoid}.
\end{dfn}
All these assigned points belong to the coordinate axis $Ox_1$, which contains the major semi-axis of the ellipsoid and remains the same for all quadrics from the confocal pencil
\begin{equation}\label{kdimelipsoidpencil}
\frac{x_1^2}{\alpha_1-\lambda}+...+\frac{x_k^2}{\alpha_k-\lambda}=1.
\end{equation}

Let a dataset of full rank in $\mathbb R^k$ be given.
Suppose that
the central principal moments of inertia satisfy $0<J_1<J_2<...<J_k$. We define $a^2_2,a^2_3,...,a^2_k$ by
$
J_1+ma_2^2=J_2,\quad J_1+ma_3^2=J_3,...,J_1+ma_k^2=J_k.
$
From the last formulas, we obtain
\begin{equation}\label{Jndim}\begin{aligned}
J_1&=J_k-ma_k^2,\, J_2=J_k+ma_2^2-ma_k^2, \dots,\\
J_{k-1}&=J_k+ma_{k-1}^2-ma_k^2.
\end{aligned}
\end{equation}

Important properties of the points with rotational ellipsoid of inertia were recently studied in \cite{DG2022a}.
\begin{dfn}\label{dfn:attach}
For the central hyperplanar ellipsoid of inertia
\begin{equation}\label{eq:hyperplanarellipsoid}
J_1x_1^2+...+J_kx_k^2=1
\end{equation}
let us \emph{attach}  the points $F_{1i}(a_i,0,...,0)$ and $F_{2i}(-a_i,0,...,0)$, for $i=2,...,k$, where $\pm a_i$ are determined from \eqref{Jndim}.
\end{dfn}

As a consequence of the generalization of the Huygens-Steiner theorem, the attached points from Definition \ref{dfn:attach} are characterized by the following property: they lie on the line of the major axis of the central ellipsoid of inertia, where two principal moments of inertia coincide. In other words, these are the points on the major axis of the central ellipsoid of inertia at which the hyperplanar ellipsoid of inertia becomes  rotational.

 Now, let us construct the confocal pencil of quadrics for which these points {\it attached to} the  central hyperplanar ellipsoid of inertia   are  {\it assigned points} in the sense of Definition \ref{def:assigned}:
\begin{equation}\label{kdimkonfokal}
\frac{x_1^2}{\frac{J_1}{m}-\lambda}+\frac{x_2^2}{\frac{J_1}{m}-a_2^2-\lambda}+...+\frac{x_k^2}{\frac{J_1}{m}-a_k^2-\lambda}=1.
\end{equation}

Thus, we construct the pencil of confocal quadrics \eqref{kdimkonfokal} beginning with the  central  hyperplanar ellipsoid of inertia \eqref{eq:hyperplanarellipsoid} in a highly nontrivial manner: first, we {\it attach} points with a  rotational ellipsoid of inertia, and then we {\it assign} to them the confocal pencil of quadrics. The resulting confocal pencil of quadrics possesses a remarkable property concerning the initial system of points that defined
the hyperplanar ellipsoid of inertia.

\begin{thm}\label{nd} Given a system of points in $\mathbb R^k$ with a total mass $m$ and  central principal hyperplanar moments of inertia  $0<J_1<J_2<\dots<J_k$, along with the center of masses $C$, consider the family of hyperplanes for which the system of points exhibits the same hyperplanar moment of inertia.  The envelope of all hyperplanes within this family, that do not contain the origin, is the quadric from the pencil of confocal quadrics:
\begin{equation}\label{kdconfocal}
\frac{x_1^2}{A^2}+\frac{x_2^2}{A^2-a_2^2}+...+\frac{x_k^2}{A^2-a_k^2}=1,
\end{equation}
where
$$
a_s^2=\frac{J_s-J_1}{m}, \quad s=2, \dots, k.
$$

All hyperplanes within this family,  that contain the origin, are asymptotically tangent at infinity to the same quadric \eqref{kdconfocal} from the pencil of confocal quadrics.
\end{thm}
\begin{proof}
Formula \eqref{kdconfocal} is obtained from \eqref{kdimkonfokal} by substitution
\begin{equation}\label{AprekoJ}
A^2=\frac{J_1}{m}-\lambda.
\end{equation}

Given a hyperplane  $\pi_1$  at the distance $d$ from the center of masses $C$, let  $\mathbf{n(\pi_1)}=(n_1,n_2,...n_k)$ be the unit vector orthogonal to the hyperplane $\pi_1$. The hyperplanar moment of inertia is given by
$$
\begin{aligned}
J_{\pi_1}&=J_1n_1^2+....+J_kn_k^2+md^2\\
&=J_k\sum_{j=1}^kn_j^2
+m(d^2+a_2^2n_2^2+a_3^2n_3^2+...+a_{k-1}^2n_{k-1}^2-a_k^2\sum_{j=1}^{k-1}n_j^2)\\
&=J_k+m(d^2+a_2^2n_2^2+a_3^2n_3^2+...+a_{k-1}^2n_{k-1}^2+a_k^2n_k^2-a_k^2).
\end{aligned}
$$
For all the hyperplanes that share the same hyperplanar moment of inertia, the value
\begin{equation}\label{sk}
S_k(\pi_1)=d^2+a_2^2n_2^2+a_3^2n_3^2+...+a_{k-1}^2n_{k-1}^2+a_k^2n_k^2-a_k^2
\end{equation}
has to remain constant, regardless of the choice of $\pi_1$.

Given an arbitrary tangent hyperplane $\pi_2$ at an arbitrary point $(x_{10},x_{20},...,x_{k0})$ of the quadric \eqref{kdconfocal}, its equation is as follows:
$$
\frac{x_1x_{10}}{A^2}+\frac{x_2x_{20}}{A^2-a_2^2}+...+\frac{x_kx_{k0}}{A^2-a_k^2}-1=0,
$$
where the unit normal vector is
$$
\mathbf{n(\pi_2)}=\frac{1}{\Delta}\Big(\frac{x_{10}}{A^2},\frac{x_{20}}{A^2-a_2^2},...,\frac{x_{k0}}{A^2-a_k^2}\Big),
$$
and
$$
\Delta^2=\frac{x_{10}^2}{A^4}+\frac{x_{20}^2}{(A^2-a_2^2)^2}+...+\frac{x_{k0}^2}{(A^2-a_k^2)^2}.
$$
 Here, ${\Delta}^{-1}$ also represents  the distance from the center of masses $C$ to the  hyperplane $\pi_2$. For $S_k(\pi_2)$ we have:
$$
\begin{aligned}
S_k(\pi_2)&
=\frac{1}{\Delta^2}\Big(1+\frac{a_2^2x_{20}^2}{(A^2-a_2^2)^2}+...+\frac{a_k^2x_{k0}^2}{(A^2-a_k^2)^2}\Big)-a_k^2\\
&=\frac{1}{\Delta^2}\Big(\frac{x_{10}^2}{A^2}+\frac{x_{20}^2}{A^2-a_2^2}+...+\frac{x_{k0}^2}{A^2-a_k^2}+\frac{a_2^2x_{20}^2}{(A^2-a_2^2)^2}+
...+\frac{a_k^2x_{k0}^2}{(A^2-a_k^2)^2}\Big)-a_k^2\\
&=\frac{1}{\Delta^2}A^2\Big(\frac{x_{10}^2}{A^4}+\frac{x_{20}^2}{(A^2-a_2^2)^2}+...+\frac{x_{k0}^2}{(A^2-a_k^2)^2}\Big)-a_k^2=A^2-a_k^2.
\end{aligned}
$$
 If $S_k(\pi_2)>0$, then $S_k(\pi_2)$ is equal to the square of the smallest semi-axis of the quadric \eqref{kdconfocal}.   Similarly, if $S_k(\pi_2)<0$, then $S_k(\pi_2)$ is equal to the negative square of the smallest semi-axis of the quadric \eqref{kdconfocal}.  This implies that $S_k=S_k(\pi)$ remains the same  for those hyperplanes $\pi$ which are tangent to the quadric \eqref{kdconfocal}, where the pencil parameter $A$ is given by:
$$
A^2=S_k+a_k^2= S_k+\frac{J_k-J_1}{m}.
$$
This proves the theorem.
\end{proof}

The type of the enveloping quadric depends on the value of the parameter $A$.
Here, $J_{\pi}=J_k+mS_k=J_k+m(A^2-a_k^2)$, or
$
A^2=\frac{J_{\pi}-J_1}{m}.
$
From the last formula, it follows that hyperplanes with a hyperplanar moment of inertia equal to $J_\pi$ are tangent to the following quadric from the confocal pencil \eqref{kdimkonfokal}:
\begin{equation}\label{eq:Jpim}
\frac{x_1^2}{J_\pi-J_1}+\frac{x_2^2}{J_\pi-J_2}+...+\frac{x_k^2}{J_\pi-J_k}=\frac{1}{m}.
\end{equation}

Thus, in terms of the value of the hyperplanar moment of inertia $J_{\pi}$, one gets the following information about the type of the
enveloping quadric:
\begin{itemize}
\item
If $J_{\pi}>J_k$, then the enveloping quadric is a $k$-dimensional
ellipsoid.
\item
If $J_i<J_{\pi}<J_{i+1}$, then the enveloping quadric belongs to the  $i$-th type quadrics.

\item
In the case $J_\pi<J_1$, there is no hyperplane with a hyperplanar moment of inertia equal to $J_\pi$.
\item
If $J_\pi=J_i$, for any $i=1, \dots, k$, then the enveloping quadric is degenerate, and the hyperplane $\pi$ coincides with the enveloping quadric as a coordinate hyperplane.

\end{itemize}

\begin{exm}\label{ex:thk2} We can consider the case $k=2$ of Theorem \ref{nd}. For a given system of points in $\mathbb R^2$, let $C$ denote the center of masses, and $F_1$ and $F_2$ be the two points with a circle as the ellipse of inertia. Then any conic with the foci $F_1$ and $F_2$ has the property that for any of its tangents, $\pi_1$, $\pi_2$, and $\pi_3$, it holds: $J_{\pi_1}=J_{\pi_2}=J_{\pi_3}$, see Fig. \ref{fig:2dthm}.

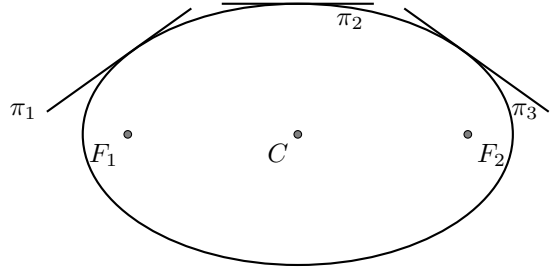
\begin{figure}[h] \centering
\begin{tikzpicture}

\draw[thick](0,0) circle [x radius={sqrt(8)}, y radius={sqrt(3)}];

\draw[black,fill=gray]({sqrt(5)},0) circle [x radius=0.05,y radius=0.05] node[anchor=north west] {$F_2$};
\draw[black,fill=gray]({-sqrt(5)},0) circle [x radius=0.05,y radius=0.05] node[anchor=north east] {$F_1$};
\draw[black,fill=gray](0,0) circle [x radius=0.05,y radius=0.05] node[anchor=north east] {$C$};

\draw[thick] (-1,{sqrt(3)}) -- (1,{sqrt(3)}) node[anchor= north east] {$\pi_2$ };
\draw[thick] (1.4,1.67) -- (3.3,0.3) node[anchor= east] {$\pi_3$};
\draw[thick] (-1.4,1.67) -- (-3.3,0.3) node[anchor= east] {$\pi_1$};

\end{tikzpicture}
\caption{Theorem \ref{nd}, case $k=2$: $J_{\pi_1}=J_{\pi_2}=J_{\pi_3}$}
\label{fig:2dthm}
\end{figure}

\end{exm}

\begin{exm}[\cite{DG2022a}]
Given a system of points of full rank in three-dimensional space with the  central principal planar moments of inertia  $J_1<J_2<J_3$, consider the family of planes for which the system of points has the same planar moment of inertia, equal $J_{\pi}$. All planes from the family are tangent to the same quadric from the pencil of confocal quadrics, which is:
\begin{itemize}
\item[(i)] if $J_{\pi}>J_3$, then the envelope is an ellipsoid;
\item[(ii)] if $J_2<J_{\pi}<J_3$, then  the envelope is a one-sheeted hyperboloid;
\item[(iii)] if  $J_1<J_{\pi}<J_2$, then the envelope is a  two-sheeted hyperboloid;
\item[(iv)] if $J_{\pi}<J_1$, then there is no plane with the planar moment of inertia equal to $J_{\pi}$;
\item[(v)] if $J_{\pi}=J_i$, then for $i=1, 2,3$, the envelope is the corresponding principal coordinate plane.
\end{itemize}
\end{exm}

A trace of the statement of Theorem \ref{nd} in the three-dimensional case can be retrieved from \cite{Ga}.  However, the presentation in  \cite{Ga} was very vague and unusual even by the standards of that historic period. It was written more like a report or an essay rather than as a scientific paper, as it did not provide clear statements, formulas, or  proofs of the main assertions.

\begin{prop}\label{prop:origin} Given a system of points in $\mathbb R^k$ with a total mass $m$ and  central principal hyperplanar moments of inertia  $0<J_1<J_2<\dots<J_k$, along with the center of masses $C$, consider
all hyperplanes $\pi$ that contain the origin
$$
\pi: \beta_1x_1+\dots +\beta_kx_k=0.
$$
All such hyperplanes with the same hyperplanar moment $J_{\pi}=\mu$ for a given value $\mu$ are parameterized as follows:
\begin{equation}\label{eq:orign}
(J_1-\mu)\beta_1^2+\dots(J_k-\mu)\beta_k^2=0.
\end{equation}
\end{prop}

\subsection{Applications in the regularization of the orthogonal least square method}\label{sec:regular}

Using duality between quadrics of points and quadrics of tangential hyperplanes (see, for example, \cite{ArnoldMMM, DR}) , we can reformulate the above Theorem \ref{nd} as follows:

\begin{thm}\label{dualnd} Given a system of points in $\mathbb R^k$ with a total mass of $m$ and  central principal hyperplanar moments of inertia  $0<J_1<J_2<\dots<J_k$, consider the family of hyperplanes for which the system of points has the same hyperplanar moment of inertia. All the hyperplanes from this family  which do not contain the origin form a quadric. By varying the hyperplanar moment of inertia, the resulting quadrics form a linear pencil that is dual to the confocal pencil  \eqref{kdimkonfokal}.
\end{thm}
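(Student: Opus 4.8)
The plan is to pass to the projective dual space and to exploit the classical fact that the polar duality carrying a central quadric to its envelope of tangent hyperplanes turns a \emph{confocal} pencil into a pencil which is \emph{linear} in the confocal parameter. First I would fix the center of masses $C$ as origin and write the member of the confocal pencil \eqref{kdconfocal} with parameter $A$ as $\mathbf{x}^TD^{-1}\mathbf{x}=1$, where $D=\diag(A^2,A^2-a_2^2,\dots,A^2-a_k^2)$.

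Next I would record the tangential (dual) equation. A hyperplane not through $C$ can be written uniquely as $\langle\mathbf{u},\mathbf{x}\rangle=1$, and the standard computation shows it is tangent to $\mathbf{x}^TD^{-1}\mathbf{x}=1$ exactly when $\mathbf{u}^TD\mathbf{u}=1$, that is
\begin{equation}\label{eq:dualquad}
A^2u_1^2+(A^2-a_2^2)u_2^2+\dots+(A^2-a_k^2)u_k^2=1.
\end{equation}
Since $\mathbf{u}=\mathbf{n}/d$ with $\mathbf{n}$ the unit normal and $d$ the distance from $C$, so that $\sum_i u_i^2=1/d^2$, equation \eqref{eq:dualquad} is nothing but the relation $d^2+a_2^2n_2^2+\dots+a_k^2n_k^2=A^2$ already obtained inside the proof of Theorem \ref{nd}. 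By that theorem the hyperplanes realizing a prescribed moment $J_\pi$ are precisely the tangent hyperplanes of the single confocal quadric with $A^2=(J_\pi-J_1)/m$ from \eqref{kdimA}; hence in dual coordinates this family is exactly the quadric \eqref{eq:dualquad}, which proves the first assertion that the family forms a quadric.

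The decisive step is then to collect \eqref{eq:dualquad} by powers of the parameter,
\begin{equation}\label{eq:linpencil}
A^2\Big(\sum_{i=1}^k u_i^2\Big)-\Big(a_2^2u_2^2+\dots+a_k^2u_k^2+1\Big)=0,
\end{equation}
and to observe that this is affine-linear in $A^2$: it has the form $A^2\,Q_\infty(\mathbf{u})+Q_0(\mathbf{u})=0$ with two fixed dual quadratic forms $Q_\infty(\mathbf{u})=\sum_{i=1}^k u_i^2$ and $Q_0(\mathbf{u})=-(a_2^2u_2^2+\dots+a_k^2u_k^2+1)$, neither depending on $A$. Thus, as $A^2$ (equivalently $J_\pi$, by the affine relation \eqref{kdimA}) runs over its admissible range, the dual quadrics \eqref{eq:linpencil} sweep out exactly the linear pencil spanned by $Q_\infty$ and $Q_0$, which is by definition the linear pencil dual to the confocal pencil \eqref{kdimkonfokal}. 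This gives the second assertion.

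The only genuinely delicate point, and the step I would treat most carefully, is the fate of hyperplanes through the center $C$, which cannot be normalized to $\langle\mathbf{u},\mathbf{x}\rangle=1$. To include them I would homogenize: representing a hyperplane as $\xi_0+\xi_1x_1+\dots+\xi_kx_k=0$, the tangential equation becomes $\sum_{i=1}^k(A^2-a_i^2)\xi_i^2=\xi_0^2$, still affine-linear in $A^2$, with the hyperplanes through $C$ sitting on $\xi_0=0$ and the limiting member $A\to\infty$ (equivalently $J_\pi\to\infty$) recovering the degenerate base quadric $Q_\infty=\sum_{i=1}^k\xi_i^2=0$. Apart from this projective bookkeeping, the argument is purely the standard observation that a confocal pencil dualizes to a linear one, so no further obstacle arises.
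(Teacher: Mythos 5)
Your proof is correct and follows exactly the route the paper intends: the paper offers no written proof beyond the remark that Theorem \ref{dualnd} is Theorem \ref{nd} ``reformulated using duality between quadrics of points and quadrics of tangential hyperplanes,'' and your argument simply carries out that duality explicitly, recovering the relation $d^2+a_2^2n_2^2+\dots+a_k^2n_k^2=A^2$ from the proof of Theorem \ref{nd} as the tangential equation and observing its affine-linearity in $A^2$. Your homogenization step handling hyperplanes through $C$ is a careful addition the paper leaves implicit.
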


 This dual formulation,  along with Proposition \ref{prop:origin}, can be used to study nonlinear constraints on the hyperplanes of regression. Such situations may arise in regularization
 problems for the orthogonal least square method. In the case of the regularization of the standard linear regression, there are various shrinkage methods developed (see e.g. \cite{HTF}). For example
 the ridge or the lasso methods depend on the type of the norm used to bound the coefficients of the hyperplanes. Here we develop similar methods for the orthogonal least square. A ridge-type method imposes an $L_2$ bound on the coefficients $\beta_1, \dots, \beta_k$ of the hyperplanes:
 $
 ||\beta||_2\le s.
 $
 A lasso-type method assumes the use of the $L_1$ norm and the condition on the coefficients $\beta_1, \dots, \beta_k$ of the hyperplanes can be written as:
 $
 ||\beta||_1\le t.
 $
 The best-fit hyperplane under each of these conditions is determined as the point of tangency of a quadric from the linear pencil from Theorem
 \ref{dualnd} and Proposition \ref{prop:origin} and the $L_2$ circle of radius $s$ in the first case and the $L_1$ circle (aka the diamond) of radius $t$ in the second case, see Fig. \ref{fig:L1L2}.

\begin{figure}[h]\centering
\includegraphics[width=7.2cm,height=4cm]{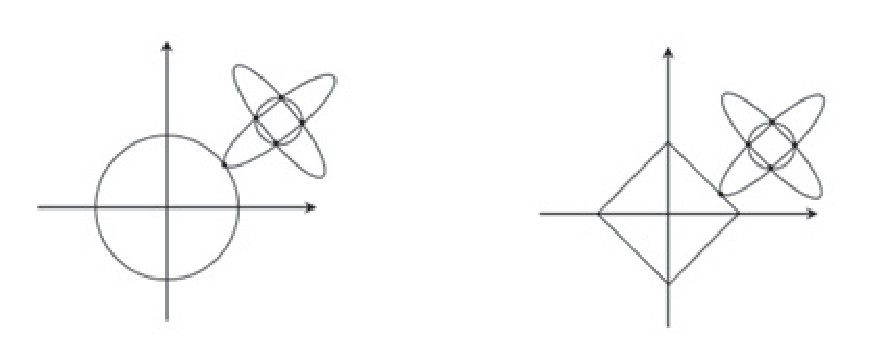}
\caption{Regularization for the orthogonal least square method: on the left, $L_2$ or ridge-type case; on the right, $L_1$ or lasso-type case. See Remark \ref{rem:pencil}.}
\label{fig:L1L2}
\end{figure}

The situation of the lasso and ridge in  classical linear regression was presented in Figure 3.11 of \cite{HTF}. In linear regression, the family of quadrics  presents
a set of homothetic quadrics defined by equation (3.51) from \cite{HTF}, see also Propositions \ref{prop:directhyper} and \ref{prop:homotetic}. On the other hand, in the orthogonal least square situation developed here,  a linear pencil of quadrics is used from Theorem  \ref{dualnd} and Proposition \ref{prop:origin}  We devote separate publications \cite{DG2024a, DG2024b} to a detailed study of geometric aspects of the lasso and its analogue in the orthogonal regression.

\begin{remark}\label{rem:pencil} A linear pencil of quadrics is generated by two quadrics.  In the  complex projective plane, a linear pencil of conics is generated by two conics, as seen in Fig. \ref{fig:L1L2}.
A generic linear pencil of conics consists of all conics that share four common points, see e.g. \cite{DR}.
\end{remark}

Another application of the methods developed in this paper is related to  gradient descent in machine learning. The optimization algorithm may not be guaranteed to arrive at the minimum in a reasonable amount of time. However, as noted in \cite{ML}, it often reaches some quite low value of the cost function (here the hyperplanar moment) equal to $s_0$ quickly enough to be useful. In application to the orthogonal least square, from Theorem \ref{nd}, we know that the hyperplanes which generate the hyperplanar moment equal to $s_0$ are all tangent to the given quadric from the pencil \eqref{kdimkonfokal}; the pencil parameter is determined through $s_0$.

\section{Hyperplanes containing a given point which best fit the data in $\mathbb R^k$. Restricted PCA } \label{sec:direct}

As a further employment of the confocal pencil of quadrics which we constructed in Section \ref{sec:kconstruction}, associated to a given system of points in $\mathbb R^k$, see Theorem \ref{nd}, we are going to reach our second goal as listed in Section \ref{sec:bestfit}. We also formulate and solve the Restricted PCA, restricted at a point, using the same confocal pencil of quadrics as a universal tool, see Theorem \ref{th:principal}. In Example \ref{ex:cells}, we apply the developed methods in a natural situation of an errors-in-variables model. We show how it effectively works
in testing a hypothesis that the line of the best fit contains a given point. We derive a new formula for test statistics in restricted orthogonal least square method in Theorem \ref{th:Flambda}.

Let the system of points $M_1,...,M_N$ with masses $m_1,...,m_N$ be given in $\mathbb{R}^k$ and let $C$ be the center of masses. We consider the following problem: {\it For a given point  $P(x_{01},...,x_{0k})$ in $\mathbb{R}^k$ find the principal hyperplanes of the hyperplanar ellipsoid of inertia at the point $P$. We also find the hyperplane that minimizes the hyperplanar moment of inertia among the hyperplanes that contain $P$}.

The family of confocal quadrics \eqref{kdimkonfokal} which we associated with the given system of points, provides the solution to the above problem in the following way. Assume that the principal axes  at the point $C$ are chosen as the coordinate axes. Let the hyperplane $\pi$  containing the point $P$ be given, with $\mathbf{n}=(n_1,n_2,...,n_k)$, as the unit normal vector to the hyperplane. Using the Huygens-Steiner theorem, the moment of inertia for the hyperplane $\pi$ is
$
J_{\pi}=J_{\pi_C}+md^2,
$
where $J_{\pi_C}$ is the moment of inertia for the hyperplane $\pi_C$, where $\pi_C$ contains the center of masses $C$ and is parallel to the hyperplane $\pi$. Let $d$ be the distance from $C$ to $\pi$. The square of the distance is given by formula
$d^2=(x_{01}n_1+...+x_{0k}n_k)^2.$
One has
$
J_{\pi}=J_1n_1^2+...+J_kn_k^2+md^2=J_1n_1^2+...+J_kn_k^2+m(x_{01}n_1+...+x_{0k}n_k)^2.
$
The direction of the principal axes at the point $P$ can be obtained as the stationary points of the function $J_{\pi}(n_1,...,n_k)$.
Since $n_1^2+...+n_k^2=1$, the stationary points can be obtained using the Lagrange multipliers.
As usual, consider the function
\begin{equation}\label{eq:Lagrange}\begin{aligned}
F(n_1,...,n_k,\mu)&=J_1n_1^2+...+J_kn_k^2+m(x_{01}n_1+...\\
&+x_{0k}n_k)^2-\mu(n_1^2+...+n_k^2-1),
\end{aligned}
\end{equation}
where $\mu$ is the Lagrange multiplier. We get the stationary points from the condition that the partial derivatives of $F$ are equal to zero. One gets the system:
\begin{equation}\label{sistemlambda}
(J_P-\mu E)\mathbf{n}=0, \quad n_1^2+...+n_k^2-1=0,
\end{equation}
where $J_P$ is the planar inertia operator at the point $P$, given by
\begin{equation}\label{JuP}
J_P=\left(\begin{matrix}J_1+mx_{01}^2&mx_{01}x_{02}&...&mx_{01}x_{0k}\\
                     mx_{02}x_{01}&J_2+mx_{02}^2&...&mx_{02}x_{0k}\\
                     .& & &\\
                     .& & &\\
                     .& & &\\
                     mx_{0k}x_{01}&mx_{0k}x_{02}&...&J_k+mx_{0k}^2
\end{matrix}\right).
\end{equation}
The first part in \eqref{sistemlambda} is a homogeneous system of $k$ equations in $n_1,...,n_k$ with $\mu$ as a parameter.
The nontrivial solutions exist under the condition that the determinant of the system
vanishes. This determinant  is equal to the characteristic polynomial of the symmetric matrix $J_P$, with roots $\mu_1,...,\mu_k$ being
the eigenvalues of the matrix $J_P$. They are the principal planar moments of inertia at the point $P$. Each eigenvalue $\mu_j$ has a corresponding eigenvector $\mathbf{n}_{{j}}$, which is the unit normal vector of a principal hyperplane. Since $J_P$ is symmetric, all eigenvalues are real, and corresponding
eigenvectors $\mathbf{n}_{(1)},...,\mathbf{n}_{(k)}$ are mutually orthogonal.

Let us look at the system \eqref{sistemlambda} in another way. First $k$ equations can be rewritten as:
$$\begin{aligned}
\frac{mx_{01}}{\mu-J_1}&=\frac{n_1}{n_1x_{01}+...+n_kx_{0k}},\quad
 \dots,\\
\frac{mx_{0k}}{\mu-J_k}&=\frac{n_k}{n_1x_{01}+...+n_kx_{0k}}.
\end{aligned}
$$
By multiplying $j$-th equation by $x_{0j}$ and  adding all of them, we come to the confocal pencil of quadrics:
\begin{equation}\label{konlambde}
\frac{x_{01}^2}{\mu-J_1}+\frac{x_{02}^2}{\mu-J_2}+...+\frac{x_{0k}^2}{\mu-J_k}=\frac{1}{m}.
\end{equation}
Each eigenvalue $\mu_j$ of the matrix $J_P$ corresponds to  one quadric from \eqref{konlambde} which passes through the point $P$.
Denote by $X_P$ the $k\times N$ matrix of the data related to the point $P$
$$
X_P=\left(\begin{matrix}\sqrt{m_1}(x_1^{(1)}-x_{01})&\sqrt{m_2}(x_1^{(2)}-x_{01})  &...&\sqrt{m_N}(x_1^{(N)}-x_{01})\\
                     \sqrt{m_1}(x_2^{(1)}-x_{02})&\sqrt{m_2}(x_2^{(2)}-x_{02})  &...&\sqrt{m_N}(x_2^{(N)}-x_{02})\\
                     .& & &\\
                     .& & &\\
                     .& & &\\
                     \sqrt{m_1}(x_k^{(1)}-x_{0k})&\sqrt{m_2}(x_k^{(2)}-x_{0k})  &...&\sqrt{m_N}(x_k^{(N)}-x_{0k})
\end{matrix}\right).
$$
Here $(x_1^{(j)}, \dots, x_k^{(j)})$ denotes the coordinates of the point $M_j$, for $j=1, \dots, N.$
We have
$
J_P=X_PX_P^T.
$
We can formulate and solve the restricted principal component analysis (RPCA) restricted at the point $P$ as follows: {\it If $n$ is a norm $1$ vector in $\mathbb R^k$, the variance of $n^TX_P$ is $n^TJ_Pn$. We search for the maximal variance among unit vectors, noncorrelated with the previous ones.}

The determination of the maximal variance
of $n^TX_P$ among all unit vectors leads to the conditional extremum problem exactly as stated above, see \eqref{eq:Lagrange} and \eqref{sistemlambda}.
Thus, the above construction and the confocal pencil of quadrics provide a universal tool to solve Restricted PCA restricted at a given point $P$,
for any point $P$. Finally, we have the following
\begin{thm}\label{th:principal} Let the system of points $M_1,...,M_N$ with masses $m_1,...,m_N$ be given in $\mathbb{R}^k$.  For any point $P(x_{01},...,x_{0k})$ there are $k$ mutually orthogonal confocal quadrics from the confocal pencil \eqref{konlambde} that contain the point $P$. The $k$ tangent hyperplanes to these quadrics are
the principal hyperplanes of inertia at the point $P$. The obtained principal coordinate axes are the principal components solving
Restricted PCA, restricted at the point $P$, i.e. providing the maximum variance among the normalized combinations $n^TX_P$, uncorrelated with previous ones.
\end{thm}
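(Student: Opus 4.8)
The plan is to assemble the statement from three ingredients: the algebraic reduction of the eigenvalue problem for $J_P$ to the pencil equation \eqref{konlambde}, already carried out in the discussion preceding the theorem; a root-counting argument producing exactly $k$ confocal quadrics through $P$; and the classical orthogonality of confocal quadrics together with the variational description of PCA via the spectral theorem.

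First I would recall that $\det(J_P-\mu E)=0$ was shown above to be equivalent, after elimination, to \eqref{konlambde}. Setting $f(\mu)=\sum_{i=1}^k\frac{x_{0i}^2}{\mu-J_i}-\frac1m$ and working in the generic case $x_{0i}\ne 0$ for all $i$, the function $f$ has a simple pole at each $J_i$, with $f\to+\infty$ as $\mu\to J_i^+$ and $f\to-\infty$ as $\mu\to J_i^-$; since moreover $f\to-\frac1m<0$ as $\mu\to+\infty$ and $f<0$ for all $\mu<J_1$, there is exactly one zero in each interval $(J_i,J_{i+1})$ for $i=1,\dots,k-1$ and one more in $(J_k,\infty)$. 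This produces $k$ distinct real roots $\mu_1<\dots<\mu_k$ interlacing the $J_i$ (consistent with the rank-one perturbation $J_P=\diag(J_1,\dots,J_k)+m\,\mathbf x_0\mathbf x_0^T$), which are simultaneously the eigenvalues of $J_P$ and the pencil parameters of the $k$ confocal quadrics from \eqref{konlambde} passing through $P$.

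Next I would identify the eigenvectors with the tangent hyperplane normals. Reading off $(J_P-\mu_j E)\mathbf n=0$ componentwise gives $(\mu_j-J_i)n_i=m\,x_{0i}(\mathbf x_0\cdot\mathbf n)$, so the eigenvector for $\mu_j$ is proportional to $\left(\frac{x_{0i}}{\mu_j-J_i}\right)_{i}$; on the other hand the gradient of the quadric $\mathcal Q_{\mu_j}$ at $P$ is $\left(\frac{2x_{0i}}{\mu_j-J_i}\right)_{i}$, the same direction. Hence the $j$-th principal hyperplane of inertia at $P$ is exactly the tangent hyperplane to $\mathcal Q_{\mu_j}$ at $P$. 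Orthogonality of the $k$ quadrics then follows either from the symmetry of $J_P$ (eigenvectors for distinct eigenvalues are orthogonal) or directly: using
\[
\frac{1}{(\mu_a-J_i)(\mu_b-J_i)}=\frac{1}{\mu_b-\mu_a}\left(\frac{1}{\mu_a-J_i}-\frac{1}{\mu_b-J_i}\right)
\]
and the fact that $P$ satisfies \eqref{konlambde} for both $\mu_a$ and $\mu_b$, the inner product of the two gradients telescopes to $\frac{1}{\mu_b-\mu_a}\left(\frac1m-\frac1m\right)=0$.

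Finally, the Restricted PCA claim is the spectral theorem applied to $J_P=X_PX_P^T$ from \eqref{eq:XPJP}. For a unit vector $\mathbf n$ the variance $\mathbf n^TJ_P\mathbf n=\|X_P^T\mathbf n\|^2$ equals the hyperplanar moment of the hyperplane through $P$ normal to $\mathbf n$, and by the Courant--Fischer characterization its successive maxima over unit vectors uncorrelated with the previously chosen ones are attained precisely at the eigenvectors $\mathbf n_{(k)},\dots,\mathbf n_{(1)}$ of $J_P$, i.e. at the principal components. The main obstacle is not any single hard computation but the bookkeeping of degenerate configurations: if some $x_{0i}=0$ or if $\mathbf x_0\cdot\mathbf n$ vanishes, then $e_i$ becomes an eigenvector with eigenvalue $J_i$ and the corresponding quadric degenerates to a coordinate hyperplane, so the interlacing and the eigenvector formula must be read with equalities; one should therefore either restrict $P$ to general position or treat these boundary cases separately to keep the correspondence between the $k$ eigenvalues and the $k$ confocal quadrics a genuine bijection.
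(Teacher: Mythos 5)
Your proposal follows essentially the same route as the paper: the eigenvalue problem $(J_P-\mu E)\mathbf n=0$ is rewritten componentwise and summed against $x_{0j}$ to land on the pencil equation \eqref{konlambde}, the eigenvectors are identified with the normals of the tangent hyperplanes to the quadrics through $P$, orthogonality comes from the symmetry of $J_P$, and the RPCA claim is the variational characterization of the spectrum of $J_P=X_PX_P^T$. The extra material you supply — the interlacing root count, the telescoping identity for orthogonality of the gradients, and the discussion of degenerate configurations with $x_{0i}=0$ — is a correct and welcome tightening of details the paper leaves implicit, but it does not change the underlying argument.
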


In the three-dimensional case, a similar result for the axial moments of inertia can be found in the classical book by Suslov \cite{Sus}.
Let us observe that the pencils of quadrics \eqref{konlambde} and \eqref{kdimkonfokal} coincide. The correspondence of the pencils is realized through the formula:
\begin{equation}\label{eq:mul}
\mu=2J_1-m\lambda.
\end{equation}

We can reformulate Theorem \ref{th:principal} in the following way:

\begin{thm}\label{th:principal1} Let the system of points $M_1,...,M_N$ with masses $m_1,...,m_N$ be given in $\mathbb{R}^k$ with the associated pencil of confocal quadrics \eqref{kdimkonfokal}.  For any point $P(x_{01},...,x_{0k})$ denote its Jacobi coordinates of the pencil of confocal quadrics \eqref{kdimkonfokal} as $(\lambda_1<\dots<\lambda_k)$.   The $k$ tangent hyperplanes to the quadrics with parameters $(\lambda_1<\dots<\lambda_k)$ are
the principal hyperplanes of inertia at the point $P$.
\end{thm}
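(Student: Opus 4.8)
The plan is to obtain Theorem \ref{th:principal1} as an immediate consequence of Theorem \ref{th:principal}, once the two pencils of confocal quadrics \eqref{konlambde} and \eqref{kdimkonfokal} are identified. The whole content is that passing from the parameter $\mu$ to the pencil parameter $\lambda$ is merely a reparametrization of one and the same family of quadrics, so that the geometric objects produced by Theorem \ref{th:principal} --- the $k$ mutually orthogonal quadrics through $P$ and their tangent hyperplanes --- are left unchanged.

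First I would verify the affine correspondence \eqref{eq:mul}, $\mu=2J_1-m\lambda$. Using the defining relations $J_1+ma_i^2=J_i$, i.e. $a_i^2=(J_i-J_1)/m$, the $i$-th denominator of \eqref{kdimkonfokal} rewrites as $\frac{2J_1-J_i}{m}-\lambda$, whereas in \eqref{konlambde}, after absorbing the factor $1/m$, the $i$-th denominator is $\frac{\mu-J_i}{m}$. Equating these for every $i$ forces $\mu-J_i=2J_1-J_i-m\lambda$, which is exactly \eqref{eq:mul}. Since $m>0$, the map $\lambda\mapsto\mu$ is a strictly decreasing affine bijection of the parameter line onto itself, so the equations \eqref{konlambde} and \eqref{kdimkonfokal} cut out the same quadric whenever their parameters are related by \eqref{eq:mul}.

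Next I would invoke Theorem \ref{th:principal}: for the point $P$ there are exactly $k$ mutually orthogonal quadrics from the pencil \eqref{konlambde} passing through $P$, their parameters being the eigenvalues $\mu_1,\dots,\mu_k$ of the inertia operator $J_P=X_PX_P^T$ from \eqref{eq:XPJP}, and the $k$ tangent hyperplanes to these quadrics at $P$ are the principal hyperplanes of inertia at $P$. Transporting each of these quadrics along the identification from the previous step, the quadric with parameter $\mu_j$ becomes the quadric $\mathcal{Q}_{\lambda_j}$ of the pencil \eqref{kdimkonfokal} with $\lambda_j=(2J_1-\mu_j)/m$. By the definition of the Jacobi coordinates in Section \ref{sec:confocal}, the set of parameters $\lambda$ for which $\mathcal{Q}_\lambda$ contains $P$ is precisely the set of Jacobi coordinates of $P$; hence $\{\lambda_1,\dots,\lambda_k\}$ is that set. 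A tangent hyperplane at a point of a quadric is intrinsic to the quadric and does not depend on how the quadric is parametrized, so the tangent hyperplanes to $\mathcal{Q}_{\lambda_1},\dots,\mathcal{Q}_{\lambda_k}$ at $P$ coincide with those supplied by Theorem \ref{th:principal}, namely the principal hyperplanes of inertia at $P$.

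I do not expect a genuine obstacle, since the substantive analysis --- the Lagrange-multiplier computation producing $J_P$ and the identification of its eigenvectors with normals of principal hyperplanes --- is already carried out in Theorem \ref{th:principal}. The only point requiring care is the bookkeeping of orderings: because $\lambda\mapsto 2J_1-m\lambda$ is decreasing, the increasing order $\mu_1<\dots<\mu_k$ of the eigenvalues of $J_P$ corresponds to the decreasing order of the associated $\lambda_j$, so relabelling them as $\lambda_1<\dots<\lambda_k$ merely reverses the pairing. This relabelling is harmless, as the statement only asserts the equality of the two collections of $k$ tangent hyperplanes and not a term-by-term matching.
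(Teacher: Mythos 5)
Your proposal is correct and follows essentially the same route as the paper: the paper likewise obtains Theorem \ref{th:principal1} by observing that the pencils \eqref{konlambde} and \eqref{kdimkonfokal} coincide under the affine reparametrization $\mu=2J_1-m\lambda$ and then reading Theorem \ref{th:principal} in the $\lambda$-parametrization. Your verification of \eqref{eq:mul} from the relations $a_i^2=(J_i-J_1)/m$ and your remark on the order-reversal of the correspondence (which the paper only makes explicit later, in the proof of Theorem \ref{th:GenP1}) are both accurate and simply fill in details the paper leaves tacit.
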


As a consequence, we get the following generalization of the Pearson Theorem \ref{th:P1}.

\begin{thm}\label{th:GenP1} Let the system of points $M_1,...,M_N$ with masses $m_1,...,m_N$ be given in $\mathbb{R}^k$ with the associated pencil of confocal quadrics \eqref{kdimkonfokal}.  For any point $P(x_{01},...,x_{0k})$ denote its Jacobi coordinates of the pencil of confocal quadrics \eqref{kdimkonfokal} by $(\lambda_1<\dots<\lambda_k)$.  The pencil parameters $\lambda_1,...,\lambda_{k}$ determine the $k$ orthogonal quadrics from the pencil containing $P$. The tangent planes at the point $P$ to these quadrics are the critical points of the hyperplanar moment with respect the hyperplanes which contain $P$.

The hyperplane which is the best fit to the given system of points among the hyperplanes that contain the point $P$ is the tangent hyperplane to the quadric from confocal pencil \eqref{kdimkonfokal} with the pencil parameter $\lambda_k$.
Similarly, the hyperplane which is the worst fit to the given system of points among the hyperplanes that contain the point $P$ is the tangent hyperplane to the quadric from \eqref{kdimkonfokal} with the pencil parameter $\lambda_1$.

\end{thm}

\begin{figure}[h]\centering
\includegraphics[width=7.2cm,height=4cm]{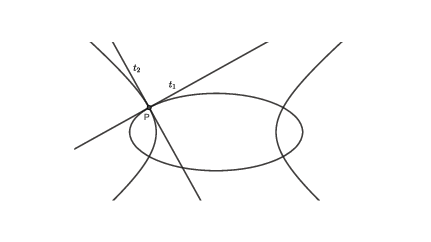}
\caption{With Corollary \ref{cor:d23orthcond} for $k=2$: The ellipse and hyperbola from the confocal pencil passing though $P$. The tangent $t_1$ to the ellipse at $P$ is the worst fit among all the lines containing $P$, while $t_2$, the tangent to the hyperbola at $P$ is the best fit among all such lines.  The tangents $t_1, t_2$ solve RPCA restricted at the point $P$.}
\label{fig:2dorthcond}
\end{figure}

\begin{proof}

The proof follows from Theorem \ref{th:principal1}. The principal hyperplanes of inertia at the point $P$ are tangent hyperplanes to the quadrics from the confocal pencil corresponding to the Jacobi coordinates of the point $P$. The Jacobi coordinates are related to the eigenvalues $\mu$ according to the formula \eqref{eq:mul}. The largest Jacobi coordinate $\lambda_k$ corresponds to the minimal eigenvalue $\mu_1$ and the smallest Jacobi coordinate $\lambda_1$ corresponds  to the largest eigenvalue $\mu_k$.  Comparing  formulas \eqref{konlambde} and \eqref{eq:Jpim}, we see that the smallest eigenvalue $\mu_1$ corresponds to the minimal hyperplanar moment $J_{\pi_1}$. Similarly, the largest eigenvalue $\mu_k$ corresponds to the maximal hyperplanar moment $J_{\pi_k}$. Thus the smallest Jacobi coordinate $\lambda_1$ corresponds to  the maximal hyperplanar moment $J_{\pi_k}$ and the largest Jacobi coordinate $\lambda_k$ corresponds to  the minimal hyperplanar moment $J_{\pi_1}$.
Thus, $\pi_1$ is the tangent hyperplane at $P$ to the quadric with the parameter $\lambda_k$, which is the hyperplane of the best fit among all the hyperplanes containing $P$. Similarly, $\pi_k$ is the tangent hyperplane at $P$ to the quadric with the parameter $\lambda_1$. This quadric is an ellipsoid and its tangent hyperplane at $P$  is the hyperplane of the worst fit among all the hyperplanes containing $P$.
\end{proof}

We now formulate low-dimensional specializations of the last Theorem \ref{th:GenP1}.

\begin{cor}\label{cor:d23orthcond} Let the system of points $M_1,...,M_N$ with masses $m_1,...,m_N$ be given in $\mathbb{R}^k$ with the associated pencil of confocal quadrics \eqref{kdimkonfokal}. Given a point $P$.
\begin{itemize}
\item  For $k=2$ let the point $P$ have the Jacobi coordinates $(\lambda_1<\lambda_2)$. The line which is the best fit for the given system
of points among the lines which contain $P$ is the tangent line to the hyperbola with the parameter $\lambda=\lambda_2$. The line which is the worst fit for the given system
of points among the lines which contain $P$ is the tangent line to the ellipse with the parameter $\lambda=\lambda_1$.   The tangents $t_1, t_2$ solve RPCA restricted at the point $P$. (See Fig. \ref{fig:2dorthcond}.)
\item  For $k=3$ let the point $P$ have the Jacobi coordinates $(\lambda_1<\lambda_2<\lambda_3)$. The plane which is the best fit for the given system
of points among the planes which contain $P$ is the tangent plane  to the two-sheeted hyperboloid with the parameter $\lambda=\lambda_3$. The plane which is the worst fit for the given system
of points among the planes which contain $P$ is the tangent plane to the ellipsoid with the parameter $\lambda=\lambda_1$.  The lines of intersection of each two of the three tangent planes solve RPCA restricted at the point $P$.
\end{itemize}
\end{cor}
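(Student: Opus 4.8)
The plan is to read off both items directly from Theorem \ref{th:GenP1}, the only additional work being to name the geometric type of the two extreme quadrics and, for the PCA statements, to translate Theorem \ref{th:principal} into the language of tangent lines and their pairwise intersections.

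First I would apply part (1) of Theorem \ref{th:GenP1}: among all hyperplanes through $P$, the best fit is the tangent hyperplane at $P$ to the quadric of the pencil \eqref{kdimkonfokal} with the largest Jacobi coordinate $\lambda_k$, and the worst fit is the tangent hyperplane to the quadric with the smallest coordinate $\lambda_1$. It then remains to identify the types of the quadrics $\mathcal Q_{\lambda_1}$ and $\mathcal Q_{\lambda_k}$, using that the $k$ confocal quadrics through a generic point are of pairwise distinct types ordered by the pencil parameter (Section \ref{sec:confocal}). For $k=2$ the planar family splits into ellipses for the smaller parameter and hyperbolas for the larger one, as recalled in Section \ref{sec:bilplane}; hence $\lambda_1$ is the ellipse through $P$ and $\lambda_2$ the hyperbola through $P$, which yields exactly the stated best/worst pair. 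For $k=3$ I would invoke the classification of Example \ref{ex:typescaustics}: raising the parameter carries the quadric from ellipsoid to one-sheeted and then to two-sheeted hyperboloid, so $\lambda_1$ selects the ellipsoid and $\lambda_3$ the two-sheeted hyperboloid through $P$.

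For the RPCA assertions I would quote Theorem \ref{th:principal}, by which the $k$ tangent hyperplanes at $P$ to the confocal quadrics through $P$ are mutually orthogonal and are the principal hyperplanes of inertia, their unit normals $\mathbf n_1,\dots,\mathbf n_k$ being the principal components. When $k=2$ the two tangents $t_1,t_2$ are orthogonal lines, so the direction of each equals the normal of the other; the lines $t_1,t_2$ are therefore the principal axes themselves and solve the restricted PCA. When $k=3$ the normals $\mathbf n_1,\mathbf n_2,\mathbf n_3$ form an orthonormal frame, whence the intersection line of the two tangent planes with normals $\mathbf n_i,\mathbf n_j$ has direction $\mathbf n_i\times\mathbf n_j=\pm\mathbf n_p$, the remaining principal component; the three pairwise intersection lines thus reproduce the principal axes and solve the restricted PCA. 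The one point that is more than bookkeeping is precisely this last observation---that orthogonality of the tangent hyperplanes converts their pairwise intersections into the principal directions---whereas the type identifications, though routine, are where one must take care to match the ordering of the $\lambda_j$ against the ordering of the semi-axes in \eqref{kdimkonfokal}.
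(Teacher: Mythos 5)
Your argument is correct, but it is not the proof the paper actually writes down. Although the corollary is announced as a ``low-dimensional specialization'' of Theorem \ref{th:GenP1}, the paper's proof is an independent coordinate computation carried out only for $k=2$: it forms the $2\times 2$ inertia matrix $J_P$ at $P$, solves the characteristic equation $\mu^2-(J_x+J_y)\mu+J_xJ_y-J_{xy}^2=0$ explicitly, writes the corresponding eigenvectors out as the explicit line equations \eqref{eq:worstfitline} and \eqref{eq:bestfitline}, and only at the end uses the correspondence $\mu=2J_1-m\lambda$ from \eqref{eq:mul} to recognize these lines as the tangents at $P$ to the ellipse ($\lambda=\lambda_1$) and the hyperbola ($\lambda=\lambda_2$) of the pencil. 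You instead deduce the best/worst statements directly from Theorem \ref{th:GenP1}(1), identify the types of $\mathcal Q_{\lambda_1}$ and $\mathcal Q_{\lambda_k}$ from the interlacing of the Jacobi coordinates with the semi-axes of \eqref{kdimkonfokal} (ellipse vs.\ hyperbola for $k=2$; ellipsoid, one-sheeted and two-sheeted hyperboloid for $k=3$, as in Example \ref{ex:typescaustics}), and obtain the RPCA claims from the mutual orthogonality of the tangent hyperplanes in Theorem \ref{th:principal}, the cross-product observation $\mathbf n_i\times\mathbf n_j=\pm\mathbf n_p$ being the only step that is more than bookkeeping. Both routes are sound; yours is shorter and, unlike the paper's written proof, actually covers the $k=3$ item, while the paper's explicit computation buys the closed-form equations of the extremal lines that are reused verbatim in Example \ref{exam2dim} and Example \ref{ex:cells}.
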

\begin{proof}  Let us provide the proof in the case $k=2$.
The matrix of the planar inertia operator at the point $P$ in given coordinates  has the form
$$J_P=\left(
\begin{matrix}
J_x&J_{xy}\\
J_{xy}&J_y
\end{matrix}
\right),
$$
where
$$\begin{aligned}
J_x&=\sum\limits_{i=1}^Nm_i(x_i-x_P)^2,\,
J_y=\sum\limits_{i=1}^Nm_i(y_i-y_P)^2,\\
J_{xy}&=\sum\limits_{i=1}^Nm_i(x_i-x_P)(y_i-y_P).
\end{aligned}
$$
The extremal planar moments of inertia are the eigenvalues of the matrix $J_P$, while the corresponding eigenvectors are orthogonal to the required extremal lines. Thus, $J_{P_1}$ and $J_{P_2}$ are the
solutions of the quadratic equation
$$
\mu^2-(J_x+J_y)\mu+J_xJ_y-J_{xy}^2=0.
$$
One gets that the maximal and the minimal planar moments of inertia are respctively
$$
J_{P_1}=\frac{J_x+J_y+\sqrt{D}}{2},\quad J_{P_2}=\frac{J_x+J_y-\sqrt{D}}{2},
$$
where $D=(J_x-J_y)^2+4J_{xy}$.

After calculating the corresponding eigenvectors, one derives the equations of the extremal lines. The equation of the line of the worst fit among the lines that contain $P$ is
 \begin{equation}\label{eq:worstfitline}
 -2J_{xy}(x-x_P)+(J_x-J_y-\sqrt{D})(y-y_P)=0.
 \end{equation}
 The equation of the line of the that best fit is
 \begin{equation}\label{eq:bestfitline}
(J_y-J_x+\sqrt{D})(x-x_P)-2J_{xy}(y-y_P)=0.
\end{equation}
From
$$
J_{P_1}=2J_{1}-m\lambda_2,\quad J_{P_2}=2J_{1}-m\lambda_1.
$$
we get
$$
\lambda_i=\frac{2J_1-J_{P_j}}{m}, \quad (i,j)\in \{(1,2), (2,1)\}.
$$
The line \eqref{eq:worstfitline} is the tangent at $P$ to the ellipse from the confocal pencil,
which passes through $P$ and has the pencil parameter $\lambda=\lambda_1$.  The line \eqref{eq:bestfitline} is the tangent at $P$ to the hyperbola from the confocal pencil,
which passes through $P$ and has the pencil parameter $\lambda=\lambda_2$.
\end{proof}

\begin{exm}\label{exam2dim}
Let $N$ points with masses $m_1,...,m_N$ be given in the plane. Let us also fix a point $P$ in the plane.
We will give the explicit formulas for the lines that are the best and the worst fit for the given system of $N$ points among the lines that contain the given point $P$, using the confocal pencil of conics \eqref{kdimkonfokal} associated to the given system of $N$ points.
Let $C$ be the centroid of the system of $N$ points. On our way, we provide the solution to the Restricted PCA at the point $P$, see \eqref{eq:n1n2}.

Denote $Cxy$ the principal coordinate system of the confocal pencil and $J_1$ and $J_2$ as before, denote the  principal planar moments of the inertia such that
$J_1+ma_2^2=J_2.$
 The confocal pencil of conics  has the form
\begin{equation}\label{eq:2dimpencil}
\frac{x^2}{\alpha-\lambda}+\frac{y^2}{\beta-\lambda}=1,\quad \alpha=\frac{J_1}{m},\quad \beta=\frac{J_1}{m}-a_2^2.
\end{equation}
For the point $P(x_P,y_P)$, its Jacobi elliptic coordinates $\lambda_1<\lambda_2$ are the solutions of the quadratic equation
\begin{equation}\label{eq:kvadratna}
\lambda^2-(\alpha+\beta-x_P^2-y_P^2)\lambda+\alpha\beta-\beta x_P^2-\alpha y_P^2=0.
\end{equation}

The formula \eqref{eq:mul} gives the connection of the extremal values  $J_{P_1}$ and $J_{P_2}$ of the principal planar inertia operator at the  point $P$ with the Jacobi elliptic coordinates $\lambda_1$ and $\lambda_2$ of the point $P$, associated with the pencil of confocal conics:
$
J_{P_1}=2J_{1}-m\lambda_2,\quad J_{P_2}=2J_{1}-m\lambda_1.
$
We have already mentioned that the line that is  the best fit among the lines that contain the point $P$ is the tangent to the $\lambda_2$-coordinate line through $P$, which is a hyperbola.  The line that is the worst fit among those that contain $P$ is the tangent to the $\lambda_1$-coordinate line though $P$, and this is an ellipse. In order to find the equations of these lines of the best and wort fit, let us
write the coordinate transformation between the Cartesian coordinates and the Jacobi elliptic coordinates (see \cite{ArnoldMMM, DR})
$$
x^2=\frac{(\alpha-\lambda_1)(\alpha-\lambda_2)}{\alpha-\beta},\quad y^2=\frac{(\beta-\lambda_1)(\beta-\lambda_2)}{\beta-\alpha}.
$$
Having in mind that the basal coordinate vectors are $\tilde{\mathbf{n}}_{(i)}=\Big(\frac{\partial x}{\partial \lambda_i}, \frac{\partial y}{\partial \lambda_i}\Big)$ for $i=1,2$ one gets
\begin{equation}\label{eq:n1n2}\begin{aligned}
\tilde{\mathbf{n}}_{(1)}&=-\frac{1}{2}\Big(\frac{\alpha-\lambda_2}{x_P(\alpha-\beta)}\ , \frac{\beta-\lambda_2}{y_P(\beta-\alpha)}\Big), \\ \tilde{\mathbf{n}}_{(2)}&=-\frac{1}{2}\Big(\frac{\alpha-\lambda_1}{x_P(\alpha-\beta)}\ , \frac{\beta-\lambda_1}{y_P(\beta-\alpha)}\Big),
\end{aligned}
\end{equation}
where it is supposed that $\lambda_1$  and $\lambda_2$ are functions of $x_P$ and $y_P$ obtained from \eqref{eq:kvadratna}.
 The pair $(\tilde{\mathbf{n}}_{(1)}, \tilde{\mathbf{n}}_{(2)})$ from \eqref{eq:n1n2} is the solution of the Restricted PCA at the point $P$.

One finally gets the equations of the line that is the best fit:

\begin{equation}\label{worstfit2dline}
\frac{\alpha-\lambda_2}{x_P}(x-x_P)- \frac{\beta-\lambda_2}{y_P}(y-y_P)=0.
\end{equation}
The equations of the line that is the worst fit is obtained analogously:
\begin{equation}\label{bestfit2dline}
\frac{\alpha-\lambda_1}{x_P}(x-x_P)- \frac{\beta-\lambda_1}{y_P}(y-y_P)=0.
\end{equation}

\end{exm}
\begin{exm}\label{ex:cells} Two types of cells in a fraction of the spleens, see \cite{Full}, \cite{Full0}, \cite{CD}, \cite{CDE}.
The data in this example are the numbers of two types of
cells in a specified fraction of the spleens of fetal mice. They are displayed in Table \ref{tab:cells}.  On the basis of sampling, it is reasonable
to assume the original counts to be Poisson random variables, as explained in \cite{CD}. Therefore,
the square roots of the counts are given in the last two columns of the table and they
have, approximately, constant variance equal to $T^{-1}=1/4$. The postulated
model is $y_t = \beta_0 + \beta_1x_t,$
where $(Y_t, X_t) = (y_t, x_t) + (e_t, u_t)$. Here $Y_t$ is the square root of the number of cells
forming rosettes for the $t$-th individual, and $X_t$ is the square root of the number
of nucleated cells for the $t$-th individual. On the basis of the sampling,
the pair of errors $(e_t, u_t)$ have a covariance matrix that is, approximately, $\Sigma= T^{-1}E=\diag(0.25,0.25)$, with $T=4$.
The square roots of the counts cannot be exactly normally distributed, but
we assume the distribution is close enough to normal to permit the use of
the formulas based on normality. Thus, here we assume $\eta=1$.

\begin{table}[h!]
  \begin{center}
    \caption{Numbers of two types of cells; following Cohen and D'Eustachio (1978), see \cite{Full}}
    \label{tab:cells}
    \begin{tabular}{c|c|c|c|c}
    \hline
      j&$m_j$&$n_j$&$Y_j$ & $X_j$  \\ 
      \hline
      1 & 52& 337&7.211 & 18.358 \\ 
      2 & 6 & 141 &  2.449 & 11.874 \\ 
     3 & 14& 177& 3.742 & 13.304 \\
4 & 5& 116 & 2.236 & 10.770  \\
5 & 5& 88&  2.236 & 9.381  \\
     \hline
    \end{tabular}
  \end{center}
\end{table}

The coordinates of the centroid $C$ are $$(\bar x, \bar y)=(12.7374, 3.5748).$$ One calculates that the components of the hyperplanar inertia operator $J_C$ at the centroid $C$ is: $J_{XX}=47.7937, J_{YY}=18.1021, J_{XY}=28.6318$. The principal hyperplanar moments of inertia $J_1, J_2$ are the eigenvalues of the operator $J_C$. The corresponding eigenvectors are directions of the principal axes. The eigenvalues are $J_1= 0.69605, J_2=65.19978$. The corresponding eigenvectors are $\mathbf{n}_1=(-0.51947, 0.85449)^T$, $\mathbf{n}_2=(-0.85449,-0.51947)^T$. The equation of the line that best fits $u_C$ contains the centroid $C$ and is given by
$
u_C: y=0.60793x-4.16865.
$
This coincides with the equations obtained in \cite{Full}. The equation of the line of the worst fit is
$
y=-1.64493x+24.52689.
$

In the original study a hypothesis of interest  is that $\beta_0=0$.
 This corresponds to the condition the line of regression contains the origin of coordinate system. Thus we will consider the origin of coordinate system, denoted by $(X,Y)$, as a point $P$. We want to apply  Theorem \ref{th:GenP1} for a given point $P$.  We introduce $(\tilde{X}, \tilde{Y})$ as the principal coordinates having the centroid $C$ as the origin. Using a coordinate transformation, we recalculate the coordinates of the point $P$ in the principal coordinates and get: $(\tilde{X}_P,\tilde{Y}_P)=(3.56202,12.74098)$.
Using \eqref{eq:2dimpencil}, we get the pencil of conics associated with this data: it is defined with $\alpha=0.13921$, $\beta=-12.76154$:
\begin{equation}\label{eq:confocalcells}
\frac{\tilde{x}^2}{0.13921-\lambda}+\frac{\tilde{y}^2}{-12.76154-\lambda}=1.
\end{equation}

The Jacobi elliptic coordinates of the centroid $C$ are:
$
\lambda_{1_C}=\beta=-12.76154$, $\lambda_{2_C}=\alpha=0.13921.
$
From
$
J_{1}=2J_1-m\lambda_{2_C},
$
we get
$
J_1=m\lambda_{2_C}=m\alpha.
$
Thus the moment of the line $u_C$ is equal to $m\lambda_{2_C}=J_1=0.69605$.

 In order to find the Jacobi elliptic coordinates of the point $P$, associated to the pencil of conics \eqref{eq:confocalcells}, one needs to solve the quadratic equation \eqref{eq:kvadratna}. We get $\lambda_{1_P}=-186.907$ and $\lambda_{2_P}=-0.73589$ and  as the Jacobi elliptic coordinates of the point $P$. Finally, the principal moments of inertia at the point $P$ are
$
J_{1_P}=2J_1-m\lambda_{2_P}=5.071564,\quad J_{2_P}=2J_1-m\lambda_{1_P}=935.9271.
$
Using formulae \eqref{bestfit2dline} and \eqref{worstfit2dline} one gets that the equation of the line $u_P$ that is the best fit among  the lines that contain $P$ is
$
u_P: \tilde{Y}=3.841884\tilde{X}-0.94389.
$
In the original coordinates the equation of this line has the form
$
y=0.30014x.
$
Thus,
$
\hat\beta_1=0.30014.
$
This quantity can be calculated also using \cite{Full}, the equation at the bottom of p. 43, with $M_{xx}=171.8001$, $M_{xy}=51.26003$, $\hat\lambda=4.057252$, $\sigma_{uu}=0.25$, reads as
$
\hat\beta_1=(M_{xx}-\hat\lambda\sigma_{uu})^{-1}M_{xy}=0.30014.
$

The line of best fit $u_P$ is tangent to the hyperbola from the confocal pencil of conics \eqref{eq:confocalcells} which contains $P$:
$$
\frac{\tilde{X}^2}{0.8751}+\frac{\tilde{Y}^2}{-12.0256}=1.
$$
The moment of the line $u_P$ is $m(2\lambda_{2_C}-\lambda_{2_P})=J_{1_P}=5.071564$, with $m=5$.

The line of the worst fit among the lines that contain $P$ is
$
\tilde{Y}=-0.26029\tilde{X}+13.66814.
$
 In the original coordinates the equation of this line has the form
$
y=-3.331376x.
$
It is tangent to the ellipse from the confocal family of conics \eqref{eq:confocalcells} that contains $P$:
$$
\frac{\tilde{X}^2}{187.0462}+\frac{\tilde{Y}^2}{174.1455}=1.
$$
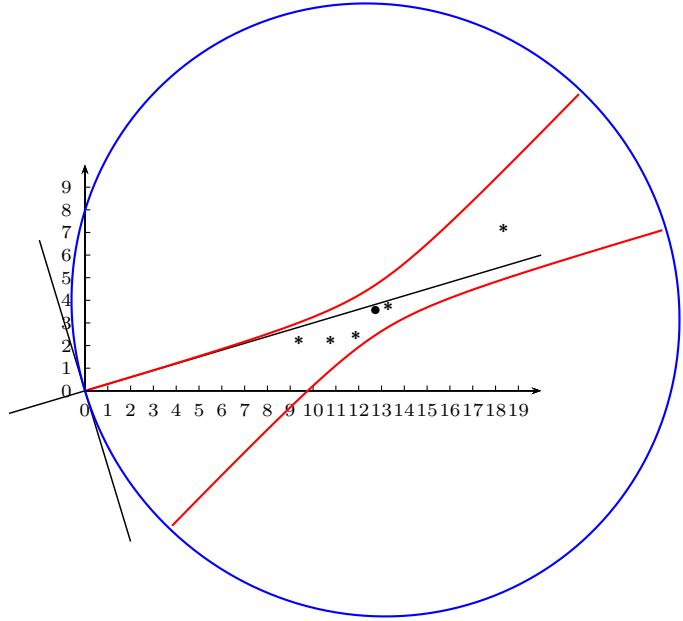
\begin{figure}[h] \centering
\psset{unit =0.7cm, linewidth=0.7\pslinewidth}
\begin{pspicture}(2,-4)(10,7)\psset{xunit=.3cm, yunit=.3cm}
\psaxes[labelFontSize=\scriptstyle, ticksize=2pt]{->}(0,0)(20,10)
\psdots[dotstyle=Basterisk](18.358,7.211)(11.874,2.449)(13.304,3.742)(10.770,2.236)(9.381,2.236)
\psdot[dotstyle=*](12.7374,3.5748)
\psline(-3.33,-1)(20,6)
\psline(-2,6.6626)(2,-6.6626)
\psparametricplot[plotpoints=200, linecolor=blue, linewidth=0.8pt, fillstyle=none]{-180}{180}{12.7374 7.10347 neg t cos  mul add 11.27632 neg t sin mul add 3.5748 11.6864 t cos mul add 6.8542 neg t sin mul add}
\psparametricplot[plotpoints=200, linecolor=red, linewidth=0.8pt, fillstyle=none]{-2}{2}{12.7374 0.4859 neg t COSH mul add 2.9632 neg t SINH mul add  3.5748 0.799348 t COSH mul add 1.8012 neg t SINH mul add}
\psparametricplot[plotpoints=200, linecolor=red, linewidth=0.8pt, fillstyle=none]{-2}{2}{12.7374 0.4859 t COSH mul add 2.9632 neg t SINH mul add  3.5748 0.799348 neg t COSH mul add 1.8012 neg t SINH mul add}
\end{pspicture}
\caption{Example \ref{ex:cells}: The lines of the best and the worst fit tangent to the hyperbola and ellipse respectively, the confocal conics through the point $P$, the origin.}
\label{fig:restortreg}
\end{figure}

See Fig. \ref{fig:restortreg}. The bold point is the center of masses $C$. The two conics from the confocal pencil which pass through the point $P$, the origin,
are presented. The line of the restricted orthogonal regression through the point $P$ is tangent to the hyperbola, while the line of the worst
fit through the point $P$ is tangent to the ellipse.

For testing the null hypothesis $\beta_0 = 0$, the test statistic is \cite{Full}
$
N\hat\lambda/(N-1),
$
whose null distribution can be approximated by Snedecor's F distribution with degrees of freedom $(N-1)$ and $\infty$. Here, as in \cite{Full}, $\hat\lambda$ is the smallest root of the equation $\det(J_P-\lambda\Sigma)=0$, where, as above $\Sigma= T^{-1}E=\diag(0.25,0.25)$, with $T=4$.

 This statistic can be expressed as
$
5\hat\lambda/4=J_{1_P}=5(2\lambda_{2_C}-\lambda_{2_P}),
$
See Theorem \ref{th:Flambda} for a more general statement.
For these data, the value of $J_{1_P}$ is 5.071564. Therefore, the approximate p-value is $P(F_{4, \infty} > 5.07) = 0.00043$, which leads to rejection of the null hypothesis.
\end{exm}
\begin{remark}\label{rem:errorcov} The models with errors in variables for $\eta$ general were first studied by C.H. Kummell in 1879, see \cite{Full}.
Now we want to treat the general cases of models with errors in variables in arbitrary dimension and with a nontrivial error covariance matrix, here denoted as $G$. We assume $G$ is known and positive definite.  As it is well known, see e.g. \cite{Full}, in such cases, the orthogonal regression should be performed not with respect to the ordinary Euclidean metric, but with respect to the metric generated by $G$. Thus, we  consider a  more general case when the inertia operator is defined with respect to the metric $\langle\cdot,\cdot\rangle_G=\langle G\cdot,\cdot\rangle$, where
$\langle\cdot,\cdot\rangle$ is the Euclidean metric and $G$ is a positive definite matrix. The inertia operator is defined by
\begin{equation}\label{metrikaG}
\begin{aligned}
(J^G_O \mathbf{n}_G, \mathbf{m}_G)&=\sum_{i=1}^Nm_i\langle \mathbf{r}_i, \mathbf{n}_G\rangle_G \langle \mathbf{r}_i, \mathbf{m}_G\rangle_G\\
&=\sum_{i=1}^Nm_i\langle \sqrt{G}\mathbf{r}_i, \sqrt{G}\mathbf{n}_G\rangle \langle \sqrt{G}\mathbf{r}_i, \sqrt{G}\mathbf{m}_G\rangle.
\end{aligned}
\end{equation}
 We suppose that $\mathbf{n}_G$ is a unit vector in the metric defined with $G$. Let us introduce $\mathbf{n}=\sqrt{G}\mathbf{n}_G$, $\mathbf{m}=\sqrt{G}\mathbf{m}_G$. Then, $\mathbf{n}$ is a unit vector in the
standard Euclidean metric
$
1=\langle G\mathbf{n}_G, \mathbf{n}_G\rangle=\langle \sqrt{G}\mathbf{n}_G, \sqrt{G}\mathbf{n}_G\rangle=\langle\mathbf{n}, \mathbf{n}\rangle.
$
Thus, we introduce new coordinates:
$
\mathbf{R}_i=\sqrt{G}\mathbf{r}_i.
$
We get
$
(J^G_O \mathbf{n}_G, \mathbf{m}_G)=(J_O\mathbf{n}, \mathbf{m}),
$
where $J_O$ is the inertia operator in the standard Euclidean metric in the new coordinates.
The last relation can be rewritten as
$$\begin{aligned}
(J_O\mathbf{n}, \mathbf{m})&=(J^G_O \mathbf{n}_G, \mathbf{m}_G)=\langle J^G_O \sqrt{G^{-1}}\mathbf{n}, \sqrt{G^{-1}}\mathbf{m}\rangle\\
&=\langle \sqrt{G^{-1}}J^G_O \sqrt{G^{-1}}\mathbf{n}, \mathbf{m}\rangle.\end{aligned}
$$
Thus, we conclude that
$
J_O=\sqrt{G^{-1}}J^G_O \sqrt{G^{-1}}.
$
 Also, if $\mathbf{n}_G$ is an eigen-vector of the operator $J_O^G$, then $\mathbf{n}=\sqrt{G}\mathbf{n}_G$ is an eigen-vector of the operator $J_O$.
The eigenvalue problem \eqref{sistemlambda} in the metric $G$ case can be expressed as
\begin{equation}\label{eq:errorG}
\det({J_O^G-\mu G})=\det(\sqrt{G^{-1}}J_O^G\sqrt{G^{-1}}-\mu E)=\det(J_O-\mu E)=0.
\end{equation}

\end{remark}
\begin{thm}\label{th:Flambda} Let the system of $N$ points $M_1,...,M_N$ with unit masses be given in $\mathbb{R}^k$, $N\ge k$, with the centroid $C$ and the associated pencil of confocal quadrics \eqref{kdimkonfokal}.  For any point $P(x_{01},...,x_{0k})$ denote its Jacobi coordinates of the pencil of confocal quadrics \eqref{kdimkonfokal} by $(\lambda_{1_P}<\dots<\lambda_{k_P})$ and the Jacobi coordinates of the centroid by $(\lambda_{1_C}<\dots<\lambda_{k_C})$. Then:
\begin{enumerate}[label=(\alph*)]
\item
 The hyperplanar moment of the hyperplane of the best fit is equal to $J_1=N\lambda_{k_C}$.
 \item
 The hyperplanar moment of the hyperplane of the best fit that contains the point $P$ is equal to $J_P=N(2\lambda_{k_C}-\lambda_{k_P})$.
\item The test statistic of the hypothesis that the hyperplane of the best fit contains the point $P$ is:
\begin{equation}\label{eq:Flambda}
\frac{N}{N-k+1}(2\lambda_{k_C} - \lambda_{k_P}).
\end{equation}
whose null distribution can be approximated by Snedecor's F distribution with degrees of freedom $(N-k+1)$ and $\infty$.
\end{enumerate}
\end{thm}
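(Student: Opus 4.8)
The plan is to reduce parts (a) and (b) to the confocal--pencil dictionary already assembled in Theorems \ref{th:principal1} and \ref{th:GenP1}, and to handle part (c) by feeding the resulting geometric expression into the normal--theory distribution of the orthogonal--regression residual. First I would record the explicit form of the pencil \eqref{kdimkonfokal}. With unit masses the total mass is $m=N$, and since $a_i^2=(J_i-J_1)/N$ the denominators in \eqref{kdimkonfokal} are $\alpha_i=(2J_1-J_i)/N$ for $i=1,\dots,k$; in particular $\alpha_1=J_1/N>\alpha_2>\dots>\alpha_k$. For part (a) I would compute the Jacobi coordinates of the centroid $C$. Since the principal axes are centered at $C$, the point $C$ is the common center of the pencil; clearing denominators in $\mathcal Q_\lambda(C)=1$ gives $\prod_{j}(\alpha_j-\lambda)=0$, so the Jacobi coordinates of $C$ are exactly $\alpha_1>\dots>\alpha_k$, whence $\lambda_{k_C}=\alpha_1=J_1/N$ (this matches the computed values $\lambda_{1_C}=\beta$, $\lambda_{2_C}=\alpha$ in Example \ref{ex:cells}). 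Combined with Pearson's Theorem \ref{th:P1}, which identifies the hyperplanar moment of the unrestricted best--fit hyperplane with the smallest principal moment $J_1$, this yields $J_1=N\lambda_{k_C}$.

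For part (b) I would invoke Theorem \ref{th:GenP1}(1): among all hyperplanes through $P$ the best fit is the tangent hyperplane to the quadric of the pencil with parameter equal to the largest Jacobi coordinate $\lambda_{k_P}$, and its hyperplanar moment is the smallest eigenvalue $\mu_1$ of the inertia operator $J_P$ at $P$. The dictionary \eqref{eq:mul}, $\mu=2J_1-m\lambda$, sends the largest Jacobi coordinate to the smallest eigenvalue, so $\mu_1=2J_1-N\lambda_{k_P}$. Substituting $J_1=N\lambda_{k_C}$ from part (a) gives at once $J_P=\mu_1=N(2\lambda_{k_C}-\lambda_{k_P})$. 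I would also note the consistency check against \eqref{eq:Jpim} and \eqref{konlambde}, which encode the very same statement read directly off the confocal equations.

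The substantive part is (c), where the geometry only supplies the \emph{value} of the statistic: under the null hypothesis that the best--fit hyperplane passes through $P$, the maximum--likelihood residual sum of squares of the orthogonal regression constrained to pass through $P$ equals the smallest eigenvalue of $J_P$, namely $J_P=N(2\lambda_{k_C}-\lambda_{k_P})$. To obtain the distribution I would appeal to the normal--theory of the isotropic ($\eta=1$) measurement--error model as in \cite{Full} and \cite{CR}: fitting the normal direction of the constrained hyperplane consumes $k-1$ parameters, so the $N$ orthogonal residuals carry $N-k+1$ degrees of freedom, and with the error variance normalized to one $J_P$ is approximately $\chi^2_{N-k+1}$ under $H_0$; dividing by the degrees of freedom produces $\frac{1}{N-k+1}J_P=\frac{N}{N-k+1}(2\lambda_{k_C}-\lambda_{k_P})$, which is approximately $F_{N-k+1,\infty}$, the second parameter being infinite because the variance is known.

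The main obstacle is precisely this distributional step rather than the algebra of (a) and (b): one must reconcile the degrees--of--freedom bookkeeping and the variance normalization with Fuller's statistic (in Example \ref{ex:cells} the extra scale is carried by $\sigma^2=T^{-1}$), and verify that the constrained maximum--likelihood residual is indeed the smallest root of the relevant generalized eigenvalue problem $\det(J_P-\mu\,\Sigma)=0$. The point I would emphasize as the genuinely new content is that this statistic, classically defined through an eigenvalue problem, collapses to the elementary combination $2\lambda_{k_C}-\lambda_{k_P}$ of the two maximal Jacobi coordinates of $C$ and $P$.
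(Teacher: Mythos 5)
Your proposal is correct and follows essentially the same route as the paper: parts (a) and (b) come from the dictionary $\mu=2J_1-m\lambda$ of \eqref{eq:mul} together with the identification (Theorems \ref{th:principal1} and \ref{th:GenP1}) of the restricted best-fit moment with the smallest eigenvalue of $J_P$, and your direct computation that the Jacobi coordinates of the centroid are the pencil denominators $\alpha_1>\dots>\alpha_k$ is equivalent to the paper's application of \eqref{eq:mul} at $C$. For part (c) the paper simply cites Fuller's Theorem 2.3.2 (formula (2.3.19)) and the metric reduction of Remark \ref{rem:errorcov} rather than rederiving the $\chi^2_{N-k+1}$ heuristic, but your degrees-of-freedom bookkeeping lands on the same statistic and the same $F_{N-k+1,\infty}$ approximation.
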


\begin{proof}
The proof follows the lines of the consideration from Example 4.1, with the obvious higher-dimensional adjustments.
First, from the formula \eqref{eq:mul} applied to the centroid $C$ we get:
$$
J_1=2J_1-N\lambda_{k_C},
$$
and deduce
$$
J_1=N\lambda_{k_C}.
$$
From the last formula and \eqref{eq:mul} we get that the hyperplanar moment of the hyperplane of the best fit is
$$
RSS_2=N\lambda_{k_C}.
$$
By using the same formulae, we get the hyperplanar moment of the hyperplane of the best fit containing a given point $P$ to be
$$
RSS_1=N(2\lambda_{k_C}-\lambda_{k_P}).
$$
From Theorem 2.3.2, formula (2.3.19), from \cite{Full} for the approximation of the test statistic we get:
$$
N(N-k-1)^{-1}\tilde\lambda,
$$
where $\tilde\lambda$ is the smallest root of $\det(M-\lambda T^{-1}S)=0$, see formula (2.3.18) in \cite{Full}. Here $T$ is a positive scalar. To relate to our notation, one takes $J_O^G=NM$ and $G=T^{-1}S$.
From there we get that the test statistic is approximated by
$$
\frac{N}{N-k+1}(2\lambda_{k_C} - \lambda_{k_P}),
$$
where $\lambda_{k_C}$ and $\lambda_{k_P}$ are the largest Jacobi coordinates of the points $C$ and $P$, where the Euclidean coordinates of these points are calculated in the coordinates $X=Gx$, and the confocal pencil \eqref{konlambde} (the same as \eqref{kdconfocal}) is generated by $\sqrt{G^{-1}}J_O^G\sqrt{G^{-1}}$.
\end{proof}
\begin{remark} If we take $G_1=S$, then the test statistic is approximated by
$$
\frac{NT}{N-k+1}(2\lambda_{k_C} - \lambda_{k_P}),
$$
where $\lambda_{k_C}$ and $\lambda_{k_P}$ are the largest Jacobi coordinates of the points $C$ and $P$, where the Euclidean coordinates of these points are now calculated in the coordinates $X=G_1x$, and the confocal pencil  is generated by $\sqrt{G_1^{-1}}J_O^G\sqrt{G_1^{-1}}$.
\end{remark}

\section{The directional regressions and ellipsoids}\label{sec:dirregel}
In the previous sections we dealt primarily with geometric aspects of orthogonal regression.
From the work of Galton \cite{Gal} we know that conics play significant role in simple linear regression.
From Fig. \ref{fig:Galton} one observes that points of horizontal and vertical tangency to the Galton ellipse (aka the data ellipse)
 determine the lines of linear regression of $x$ on $y$ and of $y$ on $x$ respectively. This motivates our further study of linear regression in $\mathbb R^k$ in a geometric, invariant, i.e. coordinate free way. We consider linear regression in an arbitrary selected direction, not necessarily horizontal or vertical. In Section \ref{sec:dir2} we deal with the planar case and generalize this to an arbitrary $k$ in Section \ref{sec:dirk}.
A higher-dimensional geometric generalization of the above observation about Galton ellipses is given in Theorem \ref{th:dirregcondk}, see also Corollary \ref{cor:k2dirreg}.
The family of confocal quadrics \eqref{kdimkonfokal} which we associated with the given system of points comes into picture here in reaching our third goal: {\it for a given direction, for a given system of $N$ points, under the  full rank assumption, and for a selected point, find the best fitting hyperplane in  the given direction}. This is done in Theorem \ref{thm8.4}. The effectiveness of the developed methods and results and their applications in statistics are illustrated in Examples \ref{ex:2ddirectional} and \ref{ex:pressure}.

\subsection{The directional axial moments in the plane}\label{sec:dir2}

Given a  line $w$ in the plane. Given $N$  points $M_1,...,M_N$ with masses $m_1, m_2,..., m_N$ of the total mass $m$ in the plane.  The
 point $C$ denotes the center of masses.

For a given line $u\subset \mathbb R^2$, non-parallel to $w$, \emph{the axial moment of inertia in direction $w$}, denoted by $I^w_u$, is defined as
$$I^w_u=\sum\limits_{j=1}^{N}m_j\hat d_j^2,$$
where $\hat d_j$ is the distance from the point $M_j$ to the point of intersection of the line $u$ with the line parallel to $w$ through the point $M_j$, $j=1, \dots, N$.

Suppose the line $u$ contains a point $O$. Denote by $\textbf{u}_0$ the unit vector parallel to the line $u$ and by $\textbf{w}_0$ the unit vector parallel to the line $w$. As before,  see Section \ref{sec:planaraxial}, we denote by $I_u$ the axial moment of inertia for the axis $u$ and by $I_O$ the operator of inertia for the point $O$. Then the axial moment of inertia $I^w_u$ for the axis $u$ in direction $w$ can be rewritten in the form:

\begin{equation}\label{eq:axialdir}
I^w_u=\sum\limits_{i=1}^{N}m_i\hat d_i^2=I_u\cdot \frac{1}{1-\langle\textbf{u}_0,\textbf{w}_0\rangle^2}=\frac{\langle I_O\textbf{u}_0,\textbf{u}_0\rangle}{1-\langle\textbf{u}_0,\textbf{w}_0\rangle^2}.
\end{equation}

\begin{figure}[h]
\begin{minipage}{0.49\textwidth}
\includegraphics[width=5.3cm,height=3cm]{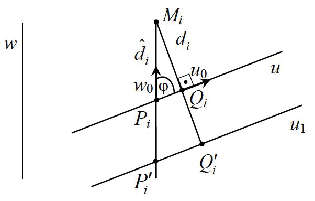}
\caption{With formula \eqref{eq:axialdir} and with Proposition \ref{prop:dirHST}.}
\label{fig:dirHST}
\end{minipage}
\begin{minipage}{0.49\textwidth}
\includegraphics[width=5.3cm,height=3cm]{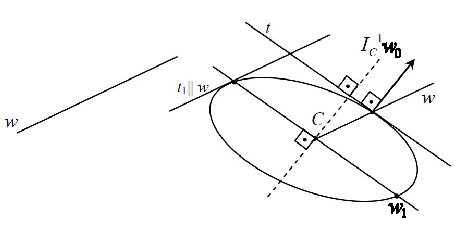}
\caption{With Theorem \ref{th:axialgyration}.}
\label{fig:axialgyration}
\end{minipage}
\end{figure}

From the last formula and Fig. \ref{fig:dirHST}, one easily gets, the following, directional version of the Huygens-Steiner Theorem:

\begin{prop}\label{prop:dirHST}[The directional Huygens-Steiner Theorem]
 Let the axis $u$ contain the center of masses $C$ and let $u_1$ be a line parallel to $u$. Denote by $I^w_{u_1}$ and $I^w_u$ the corresponding
 directional axial moments of inertia of a given system of points with the total mass $m$ in the direction $w$. Then

\begin{equation}\label{eq:HSdir} I^w_{u_1}=I^w_u+m\hat d^2,
\end{equation}
where $\hat d$ is the distance between the points of intersection of a line parallel to $w$ with the parallel lines $u$ and $u_1$.
\end{prop}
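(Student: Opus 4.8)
The plan is to mimic the classical Huygens--Steiner argument, with the orthogonal distance replaced by the oblique distance measured along $w$. First I would pass to a \emph{signed} version of $\hat d_j$. Fix the unit vector $\mathbf{w}_0$ parallel to $w$; for each point $M_j$ let $A_j$ be the intersection of the line through $M_j$ parallel to $w$ with the axis $u$, and let $s_j$ be the scalar determined by $\overrightarrow{A_jM_j}=s_j\mathbf{w}_0$, so that $\hat d_j^2=s_j^2$ and $I^w_u=\sum_j m_j s_j^2$. Writing $u=\{X:\langle\mathbf{n},X\rangle=c\}$ for a normal $\mathbf{n}$ to $u$, a direct substitution of $A_j=M_j-s_j\mathbf{w}_0$ into this equation gives $s_j=(\langle\mathbf{n},M_j\rangle-c)/\langle\mathbf{n},\mathbf{w}_0\rangle$. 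This is well defined precisely because $u$ is not parallel to $w$ (so $\langle\mathbf{n},\mathbf{w}_0\rangle\ne0$), and it exhibits $s_j$ as an affine function of the position of $M_j$ that vanishes exactly on $u$.

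Since $u_1$ is parallel to $u$, the same construction applied to $u_1$ produces signed coordinates $s_j^{(1)}$, and the difference $s_j-s_j^{(1)}=\delta$ is a single constant independent of $j$ (the common $w$-gap between the two parallel axes), with $|\delta|=\hat d$. Expanding the square then yields
\[
I^w_{u_1}=\sum_j m_j\bigl(s_j-\delta\bigr)^2=\sum_j m_j s_j^2-2\delta\sum_j m_j s_j+m\delta^2=I^w_u-2\delta\sum_j m_j s_j+m\hat d^2 .
\]
The whole argument hinges on the linear term vanishing. Because $C\in u$ we have $s(C)=0$, so by affineness $s_j=L(\overrightarrow{CM_j})$ for a fixed linear functional $L$; hence $\sum_j m_j s_j=L\bigl(\sum_j m_j\overrightarrow{CM_j}\bigr)=L(0)=0$ by the defining property of the center of masses. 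This leaves exactly $I^w_{u_1}=I^w_u+m\hat d^2$, as claimed. The only genuinely substantive step here is recognizing that the cross term is governed by the center-of-mass condition; everything else is bookkeeping.

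Alternatively, one can deduce the statement from formula \eqref{eq:axialdir} together with the ordinary Huygens--Steiner theorem \eqref{eq:HS}. Since $u_1\parallel u$ they share the same direction $\mathbf{u}_0$, so the common factor $1/(1-\langle\mathbf{u}_0,\mathbf{w}_0\rangle^2)$ in \eqref{eq:axialdir} gives
\[
I^w_{u_1}-I^w_u=\frac{I_{u_1}-I_u}{1-\langle\mathbf{u}_0,\mathbf{w}_0\rangle^2}=\frac{md^2}{1-\langle\mathbf{u}_0,\mathbf{w}_0\rangle^2},
\]
where $d$ is the \emph{perpendicular} distance between $u$ and $u_1$. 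It then remains to relate $d$ to the oblique gap $\hat d$: projecting the displacement $\hat d\,\mathbf{w}_0$ onto the unit normal $\mathbf{n}$ gives $d=\hat d\,|\langle\mathbf{w}_0,\mathbf{n}\rangle|$, and since $\{\mathbf{u}_0,\mathbf{n}\}$ is orthonormal, $\langle\mathbf{w}_0,\mathbf{n}\rangle^2=1-\langle\mathbf{w}_0,\mathbf{u}_0\rangle^2$, whence $d^2=\hat d^2\bigl(1-\langle\mathbf{u}_0,\mathbf{w}_0\rangle^2\bigr)$ and the two factors cancel to leave $m\hat d^2$. I expect the main point requiring care in this route to be exactly this trigonometric identification of $d$ with $\hat d$, since the cancellation with the denominator of \eqref{eq:axialdir} is what makes the theorem clean; in the first, self-contained route the analogous subtlety is simply the vanishing of the linear term, which is immediate once $C\in u$ is used.
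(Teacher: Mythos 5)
Your proposal is correct, and in fact you give two valid proofs. Your second route is exactly the argument the paper has in mind: the paper offers no written proof, saying only that the statement follows ``from the last formula and Fig.~\ref{fig:dirHST}'', i.e.\ from the identity $I^w_u=I_u/(1-\langle\mathbf{u}_0,\mathbf{w}_0\rangle^2)$ of \eqref{eq:axialdir} combined with the classical Huygens--Steiner theorem, with the figure supplying the trigonometric relation $d^2=\hat d^2\,(1-\langle\mathbf{u}_0,\mathbf{w}_0\rangle^2)$ between the perpendicular gap $d$ and the oblique gap $\hat d$. You correctly identify that this cancellation is the whole content of the reduction, and your derivation of it via $d=\hat d\,|\langle\mathbf{w}_0,\mathbf{n}\rangle|$ and orthonormality of $\{\mathbf{u}_0,\mathbf{n}\}$ is what the paper leaves to the picture. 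Your first route is genuinely different and arguably preferable as a written proof: by introducing the signed oblique coordinate $s_j$ as an affine functional vanishing on $u$, you reprove the directional statement from scratch in the same way the classical theorem is proved, with the center-of-mass condition killing the cross term; this makes the result self-contained (it does not presuppose \eqref{eq:axialdir} or the classical theorem) and makes transparent that the only role of $C\in u$ is the vanishing of $\sum_j m_j s_j$. The one thing worth noting is that writing $u=\{X:\langle\mathbf{n},X\rangle=c\}$ uses that $u$ is a hyperplane, which is automatic here since the proposition lives in the plane; in the hyperplanar directional version \eqref{eq:HSdirplan} the same argument goes through verbatim in $\mathbb R^k$.
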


Thus, again we get a characterization of the center of masses, as a consequence of \eqref{eq:HSdir}:

\begin{cor} Given a direction $w$, the system of points and a direction $u$ not parallel to $w$.
Among all the lines parallel to $u$, the least directional moment of inertia in direction $w$ is attained by the line through the center of masses of the system of points.
\end{cor}

We will investigate the lines which minimize the directional axial moment in the direction $w$ and call them the directional lines of regression in the direction $w$. From the above Corollary we see that they have to pass through the center of masses $C$. Define the central axial ellipse
of gyration as
\begin{equation}\label{eq:gyrellipse}
\langle I^{-1}_C\textbf{x},\textbf{x}\rangle=1.
\end{equation}
This is the same as the data ellipse.

Denote by $\hat u_w$ the direction of the directional line of regression in the direction $w$.

\begin{thm}\label{th:axialgyration} Let $w$ and $w_1$ form a pair of conjugate directions of the axial ellipse of gyration \eqref{eq:gyrellipse}.
Then the direction of the directional line of regression in the direction $w$ is orthogonal to $w_1$, i.e.
$
\langle \hat u_w,w_1\rangle=0.
$
\end{thm}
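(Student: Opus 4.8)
The plan is to reduce the statement to a one–parameter extremal problem at the centroid and then read the conclusion off the conjugate–diameter geometry of the ellipse of gyration. First, by the directional Huygens–Steiner theorem (Proposition \ref{prop:dirHST}) and its corollary, every directional line of regression in the direction $w$ passes through the center of masses $C$; hence it suffices to minimize, over the direction $\mathbf u_0$ of a line through $C$, the directional axial moment
\[
I^w_u=\frac{\langle I_C\mathbf u_0,\mathbf u_0\rangle}{1-\langle \mathbf u_0,\mathbf w_0\rangle^2}
\]
given by \eqref{eq:axialdir}. I would work in the principal basis at $C$, in which $I_C=\diag(I_1,I_2)$, so that numerator and denominator are explicit quadratic forms in $\mathbf u_0$ and the quotient is homogeneous of degree zero; minimizing it is thus a constrained extremum on the unit circle.

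Next I would locate the stationary direction. Writing $\mathbf u_0=(\cos\phi,\sin\phi)$ with $\mathbf w_0$ at a fixed angle, differentiation in $\phi$ (equivalently, Lagrange multipliers with $|\mathbf u_0|=1$) yields the stationarity condition
\[
I_C\,\hat u=\mu\bigl(\hat u-\langle \hat u,\mathbf w_0\rangle\,\mathbf w_0\bigr),
\]
where $\mu$ equals the value of the minimal directional moment. This exhibits $\hat u_w$ as proportional to $(I_C-\mu E)^{-1}\mathbf w_0$ and pins down $\mu$ through the normalization obtained by pairing the relation with $\mathbf w_0$. Geometrically, the condition says that the direction $\hat u_w$ renders the $w$–measured deviation stationary, so $\hat u_w$ is tangent to the level conic of the directional moment in the $w$–direction.

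Finally I would translate this stationarity condition into the conjugate–diameter statement about the gyration ellipse \eqref{eq:gyrellipse}. The guiding picture is Galton's own (Fig. \ref{fig:Galton}): the locus of points of $w$–tangency of the concentric central ellipses is precisely the diameter conjugate to $w$, and the regression line in the direction $w$ is that conjugate diameter. Recalling that two directions are conjugate for $\langle I_C^{-1}\mathbf x,\mathbf x\rangle=1$ exactly when the tangent at the endpoint of one diameter is parallel to the other, I would rewrite the stationarity relation in terms of $I_C^{-1}$ and identify $\hat u_w$ with the tangent direction at the endpoint of the $w_1$–diameter, yielding $\langle \hat u_w,w_1\rangle=0$. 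The main obstacle is exactly this last identification: one must show that the algebraic extremality condition for the ratio \eqref{eq:axialdir} coincides with the conjugacy condition for the gyration ellipse, which is where the reciprocal relationship between the ellipse of inertia $\langle I_C\mathbf x,\mathbf x\rangle=1$ and the ellipse of gyration $\langle I_C^{-1}\mathbf x,\mathbf x\rangle=1$ must be used (in the plane $I_C^{-1}$ is proportional to the adjugate of $I_C$), so that the extremal direction for the $w$–directional moment is governed by the conjugate–diameter structure of \eqref{eq:gyrellipse}.
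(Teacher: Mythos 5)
Your reduction to lines through $C$ and your stationarity analysis are sound, and in fact more careful than the paper's own proof. The paper computes $\grad_{\mathbf{u}_0}I^w_u=\alpha I_C\mathbf{u}_0+\beta\mathbf{w}_0$ and then asserts that the Lagrange condition forces $I_C\mathbf{u}_0$ and $\mathbf{w}_0$ to be collinear, hence $\hat u_w=I_C^{-1}\mathbf{w}_0$, from which the conjugacy statement is read off in one line. That collinearity inference is legitimate for the hyperplanar functional of Theorem \ref{th:dirregcondk}, where $\langle J_C\mathbf{n},\mathbf{n}\rangle/\langle\mathbf{w}_0,\mathbf{n}\rangle^2$ is homogeneous of degree zero in $\mathbf{n}$, so Euler's identity kills the multiplier and the full gradient must vanish; but the denominator $1-\langle\mathbf{u}_0,\mathbf{w}_0\rangle^2$ in \eqref{eq:axialdir} is not homogeneous as written, and after homogenizing on the unit circle (replacing $1$ by $|\mathbf{u}_0|^2$, as you in effect do) the vanishing of the gradient gives exactly your relation $I_C\hat u=\mu\bigl(\hat u-\langle\hat u,\mathbf{w}_0\rangle\mathbf{w}_0\bigr)$, not $I_C\hat u\parallel\mathbf{w}_0$. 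So your stationarity condition and the paper's are genuinely different, and only one can be the right one.

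The gap in your proposal is the final identification, and it cannot be closed in the form you envision. Pairing your own relation with $\mathbf{w}_0$ gives $\langle I_C\hat u_w,\mathbf{w}_0\rangle=0$ (the $\mu$-terms cancel since $|\mathbf{w}_0|=1$), i.e.\ $\hat u_w$ and $w$ are conjugate for the ellipse of inertia $\langle I_C\mathbf{x},\mathbf{x}\rangle=1$ --- equivalently, since $I_C$ is proportional to $J_C^{-1}$ in the plane, conjugate for the concentration ellipse, which is exactly the Galton picture you invoke and is consistent with Corollary \ref{cor:k2dirreg}. That is \emph{not} the same as $\hat u_w\parallel I_C^{-1}\mathbf{w}_0$, which is what ``orthogonal to the direction conjugate to $w$ for the gyration ellipse \eqref{eq:gyrellipse}'' amounts to. A concrete check: take $J_1=1$, $J_2=4$, so $I_C=\diag(4,1)$ in the principal frame, and $\mathbf{w}_0=(1,1)/\sqrt2$. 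Minimizing the sum of squared deviations along $(1,1)$ from lines $y=\kappa x$, which is proportional to $(\kappa^2J_1+J_2)/(1-\kappa)^2$, gives $\kappa=-4$, so $\hat u_w\parallel(1,-4)$ and indeed $\langle I_C\hat u_w,\mathbf{w}_0\rangle=0$; but $I_C^{-1}\mathbf{w}_0\parallel(1,4)$, the gyration-conjugate direction is $w_1\parallel(4,-1)$, and the direction orthogonal to it is $(1,4)$, not $(1,-4)$. (One also checks that $(1,4)$ is not even a critical point of \eqref{eq:axialdir} here.) So completing your argument honestly yields the conjugacy statement for the inertia/concentration ellipse, which disagrees with the statement as printed and with the paper's proof; you should record and resolve that discrepancy rather than try to force the last identification through.
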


\begin{proof}
See Fig. 8.
Let us use the formula \eqref{eq:axialdir} and calculate the gradient of $I^w_u$ with respect to $u$.
We get
$$
\grad_{\textbf{u}_0}I^w_u=\alpha I_C\textbf{u}_0 +\beta \textbf{w}_0,
$$
where $\alpha, \beta$ are some scalars and $\alpha\ne 0$.  Since $\mathbf{u}_0$ is a unit vector, or in other words, $g(\mathbf{u}_0)=\langle\mathbf{u}_0,\mathbf{u}_0\rangle-1=0$, in order to find the directional line of regression, we need to find the conditional extremum. Let us denote by $\mu$ the Lagrange multiplier.
Thus,
$$
\grad_{\textbf{u}_0}I^w_u=\mu \grad_{\textbf{u}_0}{g(\mathbf{u}_0)},
$$

only if the vectors $I_C\textbf{u}$ and $\textbf{w}_0$ are collinear. We get the condition
$
\hat u_w=I_C^{-1}w_0.
$
The direction $w_1$ is conjugate to $w$ with respect to the the axial ellipse of gyration \eqref{eq:gyrellipse} if and only if
$
\langle I_C^{-1}w_0,w_1\rangle=0.
$
This proves the theorem.
\end{proof}

 Now we consider a similar problem, but searching for the best fit among the lines that contain a given point $P$, distinct from the center of masses: \emph{Find the line in the plane that contains a given point $P$
and has the minimal directional axial moment in the direction $w$}.

Let $u$ and $u_1$ be two parallel lines, that contains $C$ and $P$ respectively. We assume that $u$ and $u_1$ are not parallel to $w$. Applying the Huygens-Steiner theorem \eqref{eq:HSdir}, we get
$
I^w_{u_1}=I^w_{u}+m\hat{d}^2.
$
Since $\hat{d}^2=\frac{{d}^2}{1-\langle u_0,w_0\rangle^2}$  using \eqref{eq:axialdir}, we get
$
I_{u_1}^w=\frac{I_u+m{d}^2}{1-\langle u_0,w_0\rangle^2}=\frac{I_{u_1}}{1-\langle u_0,w_0\rangle^2}=\frac{\langle I_Pu_0,u_0\rangle}{1-\langle u_0,w_0\rangle^2}.
$
The last formula is an analog of \eqref{eq:axialdir}. Thus, we get:
\begin{thm}\label{th:dirregcond} Let $w$ and $w_1$ form a pair of conjugate directions of the axial ellipse of gyration $\langle I^{-1}_Px,x\rangle=1$ at a given point $P$.
Then the direction of the directional line of regression in the direction $w$ at the point $P$ is orthogonal to $w_1$, i.e.
$
\langle \hat u_w,w_1\rangle=0.
$
\end{thm}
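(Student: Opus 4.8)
The plan is to reduce the statement to the central case already settled in Theorem~\ref{th:axialgyration}. The essential point is that the computation preceding the theorem has already produced, via the directional Huygens--Steiner relation \eqref{eq:HSdir}, the identity
$$
I^w_{u_1}=\frac{\langle I_P\mathbf{u}_0,\mathbf{u}_0\rangle}{1-\langle\mathbf{u}_0,\mathbf{w}_0\rangle^2}
$$
for a line $u_1$ through $P$ with unit direction $\mathbf{u}_0$ not parallel to $w$. This is formally the same expression as \eqref{eq:axialdir}, with the central operator of inertia replaced by the operator of inertia $I_P$ at the point $P$. Consequently, the problem of minimizing $I^w_{u_1}$ over directions of lines through $P$ is, as an optimization problem, identical to the one solved in Theorem~\ref{th:axialgyration}, and the entire argument there transfers with $I_C$ replaced by $I_P$. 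So the whole content of the proof is to justify this transfer and then read off the geometric conclusion.

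Concretely, I would set up the conditional extremum: minimize $I^w_{u_1}(\mathbf{u}_0)$ subject to $g(\mathbf{u}_0)=\langle\mathbf{u}_0,\mathbf{u}_0\rangle-1=0$, with Lagrange multiplier $\mu$. Computing $\grad_{\mathbf{u}_0}I^w_{u_1}$ by the quotient rule, and using that the denominator $1-\langle\mathbf{u}_0,\mathbf{w}_0\rangle^2$ is nonzero since $u_1$ is not parallel to $w$, the gradient takes the form $\alpha\,I_P\mathbf{u}_0+\beta\,\mathbf{w}_0$ with $\alpha\neq0$. Imposing the stationarity condition $\grad_{\mathbf{u}_0}I^w_{u_1}=\mu\,\grad_{\mathbf{u}_0}g$ then pins the extremal direction down to $\hat u_w=I_P^{-1}\mathbf{w}_0$ (up to a nonzero scalar), exactly as in Theorem~\ref{th:axialgyration}.

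It then remains only to invoke the definition of conjugacy with respect to the axial ellipse of gyration $\langle I_P^{-1}\mathbf{x},\mathbf{x}\rangle=1$: the directions $w$ and $w_1$ are conjugate precisely when $\langle I_P^{-1}\mathbf{w}_0,w_1\rangle=0$. Substituting $\hat u_w\parallel I_P^{-1}\mathbf{w}_0$ yields $\langle\hat u_w,w_1\rangle=0$, which is the assertion of the theorem.

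The main obstacle is the extremization step, i.e. the passage from the stationarity condition to the precise determination of $\hat u_w$. Here one must exploit the explicit coefficients $\alpha,\beta$ delivered by the quotient rule, and not merely the bare membership $I_P\mathbf{u}_0\in\Span(\mathbf{u}_0,\mathbf{w}_0)$, which in the plane carries no information; this is the delicate point where the structure of the objective is actually used. Since this is exactly the computation carried out in the proof of Theorem~\ref{th:axialgyration}, the replacement of $I_C$ by $I_P$ --- legitimate because $P$ enters $I_P$ in precisely the way $C$ enters the central operator in \eqref{eq:axialdir} --- leaves every step intact, and no genuinely new difficulty arises.
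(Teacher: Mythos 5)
Your reduction is exactly the paper's route: the paper likewise derives
$I^w_{u_1}=\langle I_P\mathbf{u}_0,\mathbf{u}_0\rangle/(1-\langle\mathbf{u}_0,\mathbf{w}_0\rangle^2)$
from the directional Huygens--Steiner relation and then declares the theorem to follow by rerunning the argument of Theorem~\ref{th:axialgyration} with $I_C$ replaced by $I_P$. The trouble is the step you yourself single out as delicate: the Lagrange computation does \emph{not} pin the extremal direction down to $\hat u_w=I_P^{-1}\mathbf{w}_0$. Write $N=\langle I_P\mathbf{u}_0,\mathbf{u}_0\rangle$, $c=\langle\mathbf{u}_0,\mathbf{w}_0\rangle$, $D=1-c^2$. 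The quotient rule gives $\grad(N/D)=\bigl(2D\,I_P\mathbf{u}_0+2Nc\,\mathbf{w}_0\bigr)/D^2$; equating this to $2\mu\mathbf{u}_0$ and pairing with $\mathbf{u}_0$ yields $\mu=N/D^2\neq 0$, and substituting back gives $I_P\mathbf{u}_0=\tfrac{N}{D}\,(\mathbf{u}_0-c\,\mathbf{w}_0)$, hence $\langle I_P\hat u_w,\mathbf{w}_0\rangle=0$. So the stationary direction is \emph{orthogonal to} $I_P\mathbf{w}_0$, which is parallel to $I_P^{-1}\mathbf{w}_0$ only when $\mathbf{w}_0$ is an eigenvector of $I_P$. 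The discrepancy is not cosmetic: with $I_P=\diag(I_1,I_2)$, $I_1\neq I_2$, and $\mathbf{w}_0$ vertical, your conclusion would make the line of regression vertical, i.e.\ parallel to $w$ (an excluded, infinite-moment direction), whereas the true minimizer $\hat u_w\perp I_P\mathbf{w}_0$ is horizontal, in agreement with the classical slope $J_{12}/J_{11}$ of the least-squares line of $y$ on $x$.

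The collinearity conclusion you import from Theorem~\ref{th:axialgyration} is valid for the hyperplanar functional $J^w_\pi=\langle J_P\mathbf{n},\mathbf{n}\rangle/\langle\mathbf{w}_0,\mathbf{n}\rangle^2$ of Theorem~\ref{thm8.4}, because that quotient is homogeneous of degree zero in $\mathbf{n}$, so the multiplier vanishes and $J_P\mathbf{n}\parallel\mathbf{w}_0$ does follow; the axial functional is not homogeneous in $\mathbf{u}_0$ and the argument does not transfer. With the corrected stationarity condition $\langle I_P\hat u_w,\mathbf{w}_0\rangle=0$, what one actually proves is that $\hat u_w$ and $w$ are conjugate with respect to $\langle I_P\mathbf{x},\mathbf{x}\rangle=1$ (equivalently, since $I_P=(\det J_P)\,J_P^{-1}$ in the plane, with respect to the concentration ellipse $\langle J_P^{-1}\mathbf{x},\mathbf{x}\rangle=1$), i.e.\ the regression line is \emph{parallel} to the conjugate direction --- the Galton phenomenon, consistent with Corollary~\ref{cor:k2dirreg} and Theorem~\ref{thm8.4}. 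So either carry out the extremization explicitly and adjust the conclusion to match, or deduce the planar axial case from Theorem~\ref{thm8.4} via the correspondence $\mathbf{n}\perp\mathbf{u}_0$; as written, your proposal reproduces, rather than closes, the gap in the paper's own proof of the central case.
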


\subsection{The directional hyperplanar moments in $\mathbb R^k$}\label{sec:dirk}

Let us select a  line $w$ in $\mathbb R^k$. Given $N$  points $M_1,...,M_N$ with masses $m_1, m_2,..., m_N$ of the total mass $m$ in $\mathbb R^k$.  The
 point $C$ denotes the center of masses.

Now, we introduce  the hyperplanar moment of inertia for the given system of points and for a given hyperplane $\pi\subset \mathbb R^k$, which is not parallel to $w$. We define $J^w_{\pi}$
the hyperplanar moment of inertia in direction $w$ as follows:
$J^w_{\pi}=\sum\limits_{j=1}^Nm_j\hat D_j^2,$
where $\hat D_j$ is the distance between the point $M_j$ and the intersection of the hyperplane $\pi$ with the line parallel to $w$ through $M_j$, for $j=1, \dots, N$. Let  $\textbf{n}$ be the unit vector orthogonal to the hyperplane $\pi$, and $O$ a point contained  in $\pi$. Then, using Fig. \ref{fig:hyperplanardir} the hyperplanar moment of inertia $J^w_{\pi}$ can be rewritten in the form
\begin{equation}\label{eq:pmidirect}
J^w_{\pi}=\frac{J_{\pi}}{\langle\textbf{w}_0,\textbf{n}\rangle^2}=\frac{\langle J_O\textbf{n},\textbf{n}\rangle}{\langle\textbf{w}_0,\textbf{n}\rangle^2}.
\end{equation}
Here  $J_{\pi}$ is
the hyperplanar moment of inertia and the operator $J_O$ is the hyperplanar inertia operator.

\begin{prop}[The hyperplanar directional Huygens-Steiner Theorem]
 Let the hyperplane $\pi$ contain the center of masses $C$ and let $\pi_1$ be a hyperplane parallel to $\pi$. Denote by $J^w_{\pi_1}$ and $J^w_{\pi}$ the corresponding
 directional hyperplanar moments of inertia of a given system of points with the total mass $m$ in the direction $w$. Then

\begin{equation}\label{eq:HSdirplan} J^w_{\pi_1}=J^w_{\pi}+m\hat D^2,
\end{equation}
where $\hat D$ is the distance between the points of intersection of a line parallel to $w$ with the parallel hyperplanes $\pi$ and $\pi_1$.
\end{prop}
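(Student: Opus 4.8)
The plan is to reduce the directional statement to the ordinary hyperplanar Huygens--Steiner theorem established in Section~\ref{sec:kconstruction} by exploiting the fact that parallel hyperplanes share a common unit normal. First I would observe that, since $\pi\parallel\pi_1$, the two hyperplanes have one and the same unit normal $\mathbf{n}$, so that formula \eqref{eq:pmidirect} applies with the \emph{same} factor in both cases: $J^w_{\pi_1}=J_{\pi_1}/\langle\mathbf{w}_0,\mathbf{n}\rangle^2$ and $J^w_{\pi}=J_{\pi}/\langle\mathbf{w}_0,\mathbf{n}\rangle^2$, where $\mathbf{w}_0$ is the unit vector along $w$. The non-parallelism hypothesis is exactly what guarantees $\langle\mathbf{w}_0,\mathbf{n}\rangle\neq0$, so all directional quantities are well defined. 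Thus the whole argument comes down to controlling the numerators $J_{\pi_1}$ and $J_{\pi}$ and then reinterpreting the correction term geometrically.

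For the numerators I would invoke the ordinary hyperplanar Huygens--Steiner theorem: since $\pi$ contains the center of masses $C$, one has $J_{\pi_1}=J_{\pi}+md^2$, where $d$ is the perpendicular distance between the two hyperplanes. This identity can also be reproved on the spot, mirroring the non-directional Proposition: fixing $O_1\in\pi_1$ and expanding $\overrightarrow{O_1M_p}=\overrightarrow{O_1C}+\overrightarrow{CM_p}$ inside $J_{\pi_1}=\sum_p m_p\langle\overrightarrow{O_1M_p},\mathbf{n}\rangle^2$, the cross term $2\langle\overrightarrow{O_1C},\mathbf{n}\rangle\sum_p m_p\langle\overrightarrow{CM_p},\mathbf{n}\rangle$ vanishes because $\sum_p m_p\overrightarrow{CM_p}=0$, leaving $m\langle\overrightarrow{O_1C},\mathbf{n}\rangle^2+J_{\pi}=md^2+J_{\pi}$. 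Dividing this relation through by $\langle\mathbf{w}_0,\mathbf{n}\rangle^2$ gives at once
\[
J^w_{\pi_1}=J^w_{\pi}+\frac{md^2}{\langle\mathbf{w}_0,\mathbf{n}\rangle^2}.
\]

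It then remains only to recognize the correction term as $m\hat D^2$, and this single geometric step is the crux of the argument rather than a genuine obstacle. One takes the segment $AB$ cut off on a line parallel to $w$ by the two parallel hyperplanes, so that $\overrightarrow{AB}$ is parallel to $\mathbf{w}_0$ and $|AB|=\hat D$; projecting $\overrightarrow{AB}$ onto the common normal recovers the perpendicular separation, $d=|\langle\overrightarrow{AB},\mathbf{n}\rangle|=\hat D\,|\langle\mathbf{w}_0,\mathbf{n}\rangle|$, whence $d^2/\langle\mathbf{w}_0,\mathbf{n}\rangle^2=\hat D^2$. Substituting into the displayed identity yields $J^w_{\pi_1}=J^w_{\pi}+m\hat D^2$, as claimed, with Fig.~\ref{fig:hyperplanardir} supplying the accompanying picture. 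The only points requiring care are the sign conventions in the projection and the standing assumption that neither hyperplane is parallel to $w$, both of which are harmless.
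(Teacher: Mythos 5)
Your proof is correct and follows essentially the same route the paper intends: the proposition is presented as an immediate consequence of formula \eqref{eq:pmidirect} (applied with the common unit normal of the two parallel hyperplanes), the ordinary hyperplanar Huygens--Steiner theorem, and the relation $d=\hat D\,|\langle\mathbf{w}_0,\mathbf{n}\rangle|$, which is exactly the chain of steps you carry out. Your write-up simply makes explicit what the paper leaves to the reader and to Fig.~\ref{fig:hyperplanardir}.
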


Thus, again we are getting a characterization of the center of masses, using \eqref{eq:HSdirplan}:

\begin{cor} Given a direction $w$, the system of points and one hyperplane $\pi$ not parallel to $w$.
Among all the hyperplanes parallel to $\pi$, the least directional hyperplanar moment of inertia in direction $w$ is attained for the hyperplane which contains the center of masses of the system of points.
\end{cor}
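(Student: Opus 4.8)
The plan is to deduce this characterization of the center of masses directly from the directional Huygens--Steiner Theorem stated just above in \eqref{eq:HSdirplan}, exactly as the ordinary and planar versions earlier in the paper were used to characterize $C$. First I would fix the family $\mathcal{F}$ of all hyperplanes parallel to the given hyperplane $\pi$. Since all members of $\mathcal{F}$ share the common normal direction of $\pi$, and $\pi$ is not parallel to $w$, no member of $\mathcal{F}$ is parallel to $w$; hence the directional hyperplanar moment $J^w_{\pi_1}$ is well defined for every $\pi_1\in\mathcal F$, because the line through any given point parallel to $w$ meets each such hyperplane in exactly one point.

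Next I would single out the unique hyperplane $\pi_C\in\mathcal F$ that contains the center of masses $C$; it is unique because two distinct parallel hyperplanes cannot both pass through the same point. Applying \eqref{eq:HSdirplan} with $\pi_C$ in the role of the hyperplane through $C$ and an arbitrary $\pi_1\in\mathcal F$ in the role of its parallel translate, I obtain
$$
J^w_{\pi_1}=J^w_{\pi_C}+m\hat D^2,
$$
where $\hat D$ is the directional distance between $\pi_C$ and $\pi_1$ measured along a line parallel to $w$.

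Finally, since the total mass $m$ is strictly positive and $\hat D^2\ge 0$ with equality if and only if $\pi_1=\pi_C$, the quantity $J^w_{\pi_1}$ is minimized over $\mathcal F$ precisely at $\pi_1=\pi_C$, that is, at the hyperplane containing the center of masses. This establishes the claim and in fact shows that the minimizer is unique.

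There is no genuine obstacle here: the whole content is already packaged in the preceding Proposition, and the corollary amounts to the elementary observation that an expression of the form $J^w_{\pi_C}+m\hat D^2$ with $m>0$ is smallest when the nonnegative term $m\hat D^2$ vanishes. The only points deserving a word of care are the well-definedness of $J^w_{\pi_1}$ on the entire family, which is guaranteed by the hypothesis that $\pi$ is not parallel to $w$, and the existence and uniqueness of the member passing through $C$; both are immediate.
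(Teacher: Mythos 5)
Your proof is correct and follows exactly the route the paper intends: the corollary is stated as an immediate consequence of the directional Huygens--Steiner relation \eqref{eq:HSdirplan}, with the minimum attained when the nonnegative term $m\hat D^2$ vanishes. The extra remarks on well-definedness and uniqueness are sound but not needed beyond what the paper already assumes.
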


\begin{figure}[h]
\begin{minipage}{0.49\textwidth}\centering
\includegraphics[width=5.3cm,height=3cm]{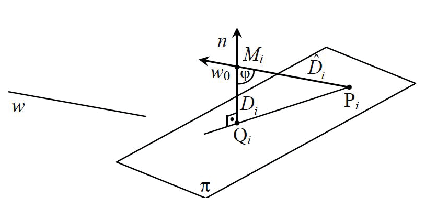}
\caption{With equation \eqref{eq:pmidirect}.}
\label{fig:hyperplanardir}
\end{minipage}
\begin{minipage}{0.49\textwidth}\centering
\includegraphics[width=5.3cm,height=3cm]{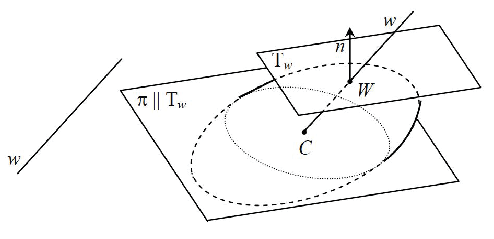}
\caption{With Theorem \ref{th:dirregcondk}.}
\label{fig:dirregcondk}
\end{minipage}
\end{figure}

We will investigate the hyperplanes which minimize the directional hyperplanar moment in the direction $w$ and call them the directional hyperplanes of regression in the direction $w$. From the above Corollary it follows that they contain the center of masses $C$. Define the data ellipsoid as
\begin{equation}\label{eq:concentellipsoid}
\langle J^{-1}_C\textbf{x},\textbf{x}\rangle=1.
\end{equation}
Denote by $\hat \pi_w$  the directional hyperplane of regression in the direction $w$.

\begin{thm}\label{th:dirregcondk} Let the radius vector from the center of masses $C$ in direction $w$ intersect the data ellipsoid \eqref{eq:concentellipsoid} at the point $W$ and let the tangent hyperplane to the data ellipsoid at $W$ be $\pi_W$.
The directional hyperplane of regression in the direction $w$ contains $C$ and is parallel to the tangent hyperplane $\pi_W$.
\end{thm}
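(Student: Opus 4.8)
The plan is to reduce the problem to a constrained minimization over normal directions and then identify the minimizing direction with the polar (tangent) hyperplane of the ellipsoid of concentration. First I would invoke the Corollary following the hyperplanar directional Huygens--Steiner theorem: among all hyperplanes parallel to a fixed $\pi$, the directional hyperplanar moment $J^w_\pi$ is least on the one through $C$. Hence the directional hyperplane of regression $\hat\pi_w$ passes through $C$, and it only remains to determine its normal direction. Writing $J:=J_C$ for the hyperplanar inertia operator at $C$ and using \eqref{eq:pmidirect}, the task is to minimize
$$
J^w_\pi = \frac{\langle J\mathbf{n},\mathbf{n}\rangle}{\langle \mathbf{w}_0,\mathbf{n}\rangle^2}
$$
over normals $\mathbf{n}$ of hyperplanes through $C$ that are not parallel to $w$.

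Next, since the right-hand side is homogeneous of degree zero in $\mathbf{n}$, I would drop the unit-length requirement and instead normalize $\langle \mathbf{w}_0,\mathbf{n}\rangle=1$, so that minimizing $J^w_\pi$ becomes minimizing the positive-definite quadratic form $\langle J\mathbf{n},\mathbf{n}\rangle$ over the affine hyperplane $\{\mathbf{n}:\langle \mathbf{w}_0,\mathbf{n}\rangle=1\}$. This is a strictly convex problem with a unique solution, found by Lagrange multipliers: setting $\grad_{\mathbf{n}}\bigl(\langle J\mathbf{n},\mathbf{n}\rangle-2\mu\langle \mathbf{w}_0,\mathbf{n}\rangle\bigr)=0$ yields $J\mathbf{n}=\mu\mathbf{w}_0$, so the optimal normal satisfies
$$
\mathbf{n}=\mu J^{-1}\mathbf{w}_0\ \propto\ J^{-1}\mathbf{w}_0 .
$$

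It then remains to identify this direction geometrically. The ray from $C$ in direction $w$ meets the ellipsoid of concentration \eqref{eq:concentellipsoid} at $W=t\mathbf{w}_0$, where $t=\bigl(\langle J^{-1}\mathbf{w}_0,\mathbf{w}_0\rangle\bigr)^{-1/2}$. The gradient of the defining form $\langle J^{-1}\mathbf{x},\mathbf{x}\rangle$ at $W$ equals $2J^{-1}W=2t\,J^{-1}\mathbf{w}_0$, so the tangent hyperplane $\pi_W$ to the ellipsoid at $W$ has normal proportional to $J^{-1}\mathbf{w}_0$, exactly the minimizing normal found above. Thus $\hat\pi_w$ and $\pi_W$ share a normal direction, hence are parallel, and $\hat\pi_w$ contains $C$ by the first step; this is the assertion.

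The computation is routine once set up; the only genuinely substantive point is recognizing that the minimizing normal $J^{-1}\mathbf{w}_0$ is precisely the polar direction of $\mathbf{w}_0$ with respect to the concentration ellipsoid, i.e. the normal to the tangent hyperplane at the point where the $w$-ray pierces the ellipsoid. Equivalently, this expresses $\hat\pi_w$ and $w$ as conjugate (pole--polar) data for the concentration ellipsoid, paralleling the planar conjugate-direction formulation of Theorems \ref{th:axialgyration} and \ref{th:dirregcond}.
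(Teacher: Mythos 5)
Your proposal is correct and follows essentially the same route as the paper: a Lagrange-multiplier computation showing the optimal normal is $J_C^{-1}\mathbf{w}_0$, followed by the observation that this is precisely the normal to the tangent hyperplane of the concentration ellipsoid at $W$. Your choice to exploit zero-homogeneity and normalize $\langle\mathbf{w}_0,\mathbf{n}\rangle=1$ rather than $\|\mathbf{n}\|=1$ is a minor but welcome refinement, since it makes the problem strictly convex and renders immediate the collinearity $J_C\mathbf{n}\propto\mathbf{w}_0$ that the paper's version of the multiplier argument leaves somewhat terse.
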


\begin{proof}
Let us use the formula \eqref{eq:pmidirect}  and calculate the gradient of $J^w_{\pi}$ with respect to the normal vector $n$.
We get
$$
\grad_{\textbf{n}}J^w_{\pi}=\alpha J_C\textbf{n} +\beta \textbf{w}_0,
$$
where $\alpha, \beta$ are some scalars and $\alpha\ne 0$.  Since $\mathbf{n}$ is a unit vector, i.e. $g(\mathbf{n})=\langle\mathbf{n},\mathbf{n}\rangle-1=0$, in order to find the directional hyperplane of regression, we need to find the conditional extremum. Let us denote by $\mu$ the Lagrange multiplier.
Thus,
$$
\grad_{\textbf{n}}J^w_{\pi}=\mu\grad_{\textbf{n}}{g(\mathbf{n})},
$$
only if the vectors $J_C\textbf{n}$ and $\textbf{w}_0$ are collinear. We get that
$
\hat n_w=J_C^{-1}\mathbf{w}_0
$
is the vector orthogonal to the directional hyperplane of regression in the direction $w$. On the other hand, $J_C^{-1}\mathbf{w}_0$ is orthogonal to the tangent hyperplane of the data ellipsoid \eqref{eq:concentellipsoid} at its point $W$. See Fig. 10.
This proves the theorem.
\end{proof}

A statement similar to the last theorem can be extracted from \cite{Dem}, based on different considerations.
For $k=2$ we get an important specialization of Theorem \ref{th:dirregcondk}, see Fig. \ref{fig:k2dirreg}.

\begin{cor}\label{cor:k2dirreg} In the case $k=2$ the line of regression in the direction $w$ contains the center of masses $C$ and intersects the data ellipse  at two points at which the tangents to the data ellipse  are parallel to the line $w$. Equivalently, the line of regression is parallel to the tangent line of the data ellipse  at the point of its intersection with the line parallel to $w$ passing through the centroid $C$.
\end{cor}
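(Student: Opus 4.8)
The plan is to derive both characterizations in the corollary directly from Theorem \ref{th:dirregcondk} specialized to $k=2$, together with the symmetry of the inertia operator $J_C$, which is exactly what encodes the classical conjugate-diameter relation for the ellipse of concentration. There is essentially no independent computation to do; the two characterizations are two faces of the same conjugacy statement.

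First I would specialize Theorem \ref{th:dirregcondk} to $k=2$: the directional hyperplane becomes a line and the ellipsoid of concentration \eqref{eq:concentellipsoid} becomes the ellipse of concentration. The theorem then states verbatim that the directional line of regression in the direction $w$ passes through $C$ and is parallel to the tangent line $\pi_W$ of the ellipse at the point $W$, where $W$ is the intersection with the ellipse of the ray from $C$ in the direction $w$. This is precisely the ``Equivalently'' clause of the corollary, so that half is immediate.

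Next, to obtain the first characterization, I would record analytically what ``parallel to $\pi_W$'' means. Writing the ellipse as $\langle J_C^{-1}\mathbf{x},\mathbf{x}\rangle=1$ and $W=t\mathbf{w}_0$ for a suitable scalar $t$, the normal of $\pi_W$ is proportional to $J_C^{-1}W$, hence to $J_C^{-1}\mathbf{w}_0$; thus the direction $\hat u_w$ of the regression line satisfies $\langle\hat u_w,J_C^{-1}\mathbf{w}_0\rangle=0$, in agreement with the normal $\hat n_w=J_C^{-1}\mathbf{w}_0$ found in the proof of Theorem \ref{th:dirregcondk}. Now let $U$ be either intersection point of the regression line with the ellipse, so $U=\pm s\,\hat u_w$; the tangent at $U$ has normal proportional to $J_C^{-1}U$, hence to $J_C^{-1}\hat u_w$. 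Using the symmetry of $J_C^{-1}$,
\[
\langle J_C^{-1}\hat u_w,\mathbf{w}_0\rangle=\langle\hat u_w,J_C^{-1}\mathbf{w}_0\rangle=0,
\]
so the normal at $U$ is orthogonal to $\mathbf{w}_0$, i.e. the tangent at $U$ is parallel to $w$. This yields the first characterization and simultaneously shows it is equivalent to the second, both amounting to the assertion that $w$ and $\hat u_w$ are conjugate directions of the ellipse of concentration.

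The only genuine content is this symmetric conjugate-diameter step, and it is light; the sole point requiring care is that the regression line really is a diameter meeting the ellipse in two points, which holds because it passes through the centre $C$ and the concentration conic is an ellipse (no asymptotic directions). The borderline coincidence $\hat u_w\parallel w$ arises exactly when $w$ is a principal axis, and in that case both characterizations collapse to the familiar horizontal/vertical-tangency picture of Galton's ellipse, recovering the motivating observation from Section \ref{sec:dirregel}.
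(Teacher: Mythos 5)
Your proof is correct and follows the route the paper intends: the corollary is stated without a separate proof as a direct specialization of Theorem \ref{th:dirregcondk} to $k=2$, which is exactly your starting point. The one piece the paper leaves implicit --- that the two phrasings agree, via the symmetry computation $\langle J_C^{-1}\hat u_w,\mathbf{w}_0\rangle=\langle\hat u_w,J_C^{-1}\mathbf{w}_0\rangle=0$ identifying $w$ and $\hat u_w$ as conjugate directions of the concentration ellipse --- is supplied correctly in your argument.
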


\begin{figure}[h]
\begin{minipage}{0.49\textwidth}\centering
\includegraphics[width=5.3cm,height=3cm]{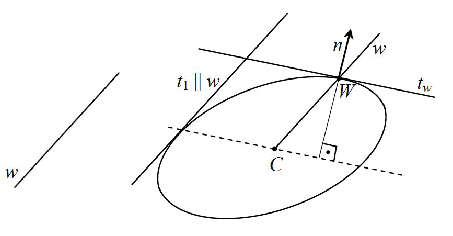}
\caption{With Corollary \ref{cor:k2dirreg}.}
\label{fig:k2dirreg}
\end{minipage}
\begin{minipage}{0.49\textwidth}\centering
\includegraphics[width=5.3cm,height=3cm]{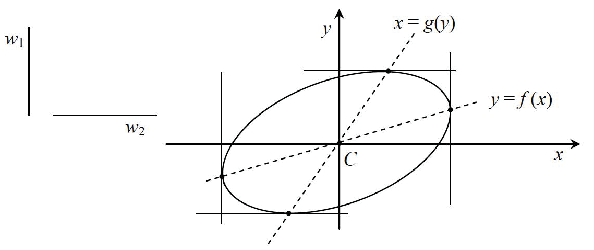}
\caption{Simple linear regression.}
\label{fig:slr}
\end{minipage}
\end{figure}

\begin{exm}\label{ex:slr} Let us consider the two-dimensional case of the last Corollary in more detail. Such a situation is typical in statistics in simple linear models.
Assume that each observation from an experiment is a pair of numbers, and that the goal is
 to predict one of the numbers from the other. Then the method of least squares can be
used for constructing a predictor of one of the variables from the other one, by
 using of a sample of observed pairs (see e.g. \cite{DeSch}).

 Given $N$ points $(x^{(i)}, y^{(i)}),\,_{i=1}^N$ in the plane. The straight line
 $$y=f(x)$$
 that minimizes the sum of the squares of the vertical deviations of all the points from the
line is called the least-squares line in the direction of the $y$-axis. Similarly, the straight line
$$x=g(y)$$
that
minimizes the sum of the squares of the horizontal deviations of all the points from the
line is called the least-squares line in the direction of the $x$-axis. See Fig. \ref{fig:slr}, where the direction $w_1$ is parallel
to the $y$-axis, while $w_2$ is parallel to the $x$-axis. The  least-squares line $y=f(x)$ in the direction of the $y$-axis
intersects the data ellipse at the points of vertical tangency. The  least-squares line $x=g(y)$ in the direction of the $x$-axis
intersects the data ellipse at the points of horizontal tangency.
If
\begin{equation}\label{eq:f}
f(x)= \hat\beta_0 +\hat\beta_1 x
\end{equation}
then, it is well known (see e.g. \cite{DeSch}), that the pair of coefficients $(\hat\beta_0, \hat\beta_1)$ can be determined as follows
\begin{equation}\label{eq:directbeta}
\hat \beta_1=\frac{\sum_{i=1}^N(y^{(i)}-\bar y)(x^{(i)}-\bar x)}{\sum_{i=1}^N(x^{(i)}-\bar x)^2}, \quad \hat\beta_0=\bar y -\hat \beta_1\bar x,
\end{equation}
where $(\bar x, \bar y)$ are the coordinates of the centroid of the given system of $N$ points:
$$
\bar x=\frac{1}{N}\sum_{i=1}^Nx^{(i)}, \quad \bar y=\frac{1}{N}\sum_{i=1}^Ny^{(i)}.
$$
By exchanging the roles of $x$ and $y$ in the above formulas we get the coefficients $\hat \alpha_0, \hat\alpha_1$ of the linear function $g(y)=\hat\alpha_0 + \hat\alpha_1 y$.

If we assume that the $y$ values are observed values of a
collection of random variables $Y$, then under the assumptions of the simple linear regression,  the
method of least squares provides the maximum likelihood estimates
$\hat\beta_0$ and $\hat\beta_1$ for the parameters of the model $\beta_0$ and $\beta_1$ respectively.
Moreover
$$
\hat\sigma^2=\frac{1}{N}\sum_{i=1}^N(y^{(i)}-\hat\beta_0-\hat\beta_1x^{(i)})^2
$$
is the  maximum likelihood estimate for the variance $\sigma^2$ of the normal distribution describing the conditional distribution of $Y$ given $X=x$. While $\hat\beta_0$ and $\hat\beta_1$ are unbiased estimators, $\hat\sigma^2$ is a biased estimator. The corresponding unbiased estimator
is
$$
\hat\sigma'^2=\frac{S^2}{N-2},
$$
where
$$
S^2=\sum_{i=1}^N(Y^{(i)}-\hat\beta_0-\hat\beta_1x^{(i)})^2.
$$

Now, we will derive the formulas \eqref{eq:f}, \eqref{eq:directbeta} directly from Theorem \ref{th:dirregcondk} and Corollary \ref{cor:k2dirreg}.  Let us present the formula for the line that minimize
sum of the squares of deviations in the direction $\mathbf{w}_0=(w_x,w_y)^T$. As mentioned before, this line contains the
centroid (i.e. the center of masses) $(\bar{x},\bar{y})$ of the system of points. Let its equation be of the form
$$y-\bar{y}=\kappa(x-\bar{x}).$$
From Theorem \ref{th:dirregcondk} it follows that this line is parallel to
$\hat n_w=J_C^{-1}\mathbf{w}_0$. Since
$$
J_C=\left(\begin{matrix}J_{11}&J_{12}\\
                        J_{12}&J_{22}
                        \end{matrix}
                        \right)
$$
one calculates
$$
J_C^{-1}=\frac{1}{\det J_C}\left(\begin{matrix}J_{22}&-J_{12}\\
                        -J_{12}&J_{11}
                        \end{matrix}
                        \right).
$$
Thus,
$$
\hat n_w=\frac{1}{\det J_C}\left(\begin{matrix}J_{22}w_x-J_{12}w_y\\
                                                -J_{12}w_x+J_{11}w_y
                                                \end{matrix}\right),
$$
and one gets that the slope of the line is
$$\kappa=\frac{J_{22}w_x-J_{12}w_y}{J_{12}w_x-J_{11}w_y}.$$
In the special case of the vertical deviations, $\mathbf{w}_0=(0,1)$, and the equation of the least squares line becomes
$$
y-\bar{y}=\frac{J_{12}}{J_{11}}(x-\bar{x}).
$$
Using
$$
J_{12}=\sum\limits_{i=1}^N(y^{(i)}-\bar{y})(x^{(i)}-\bar{x}),\quad J_{11}=\sum\limits_{i=1}^N(x^{(i)}-\bar{x})^2,
$$
we get both formulas \eqref{eq:f} and \eqref{eq:directbeta}.
\end{exm}

\begin{exm}
In a similar manner as in Example \ref{ex:slr}, one can consider the general linear model and multiple regression. One can derive the equations in $\mathbb R^k$ of the hyperplane that minimizes the sum of the squares of deviations in the direction $\mathbf{w}_0=(w_1,w_2,...,w_k)^T$. The normal vector
$\mathbf{n}=J_C^{-1}\mathbf{w}_0$ and the equation of the plane is
$$
\sum\limits_{i=1}^N n_i(x_i-\bar{x}_i)=0.
$$
 For example, in the direction normal to hyperplane $x_i=1$, the vector of the normal to the plane $\mathbf{n}$ is the $i$-th column of the matrix of cofactors of $J_C$.

A standard derivation of formulas of the general linear model and multiple regression in direction $x_k$ one can find in e.g. \cite{DeSch}, Ch. 11.5.
\end{exm}

Let the direction $w$ and a system of $N$ points in $\mathbb{R}^k$ with masses $m_1,...,m_N$ be given under the full rank assumption, where $k\geqslant 2$.  We are going to describe all hyperplanes that have the same directional hyperplanar moment of inertia in direction $w$. We consider a coordinate system  with the origin at the centroid $C$, and with the direction $w$ coinciding with the axis $Cx_{k}$.

The  hyperplanar operator of inertia at the point $C$ has the expression
$$
J_{C}=\left( \begin{matrix}
J_{11}&J_{12}&J_{13}&...&J_{1k}\\
J_{12}&J_{22}&J_{23}&...&J_{2k}\\
...&&&&&\\
J_{1k-1}&J_{2k-1}&J_{3k-1}&...&J_{k-1k}\\
J_{1k}&J_{2k}&J_{3k}&...&J_{kk}\\
\end{matrix}
\right).
$$
It is convenient to introduce the submatrix $K_1$ of the matrix $J_C$ and the vector $b$ by:
$$
K_1=\left( \begin{matrix}
J_{11}&J_{12}&J_{13}&...&J_{1k-1}\\
J_{12}&J_{22}&J_{23}&...&J_{2k-1}\\
...&&&&&\\
J_{1k-1}&J_{2k-1}&J_{3k-1}&...&J_{k-1k-1}\\
\end{matrix}
\right),
$$
$$
b=(J_{1k},...,J_{k-1k})^T.
$$
In the case when all masses are equal to $m_i=\frac{1}{N}$, the matrix $K_1$ coincides with the covariance matrix $K$.
Assume that the hyperplane $\pi$ is not parallel to the axis $Cx_k$. The equation of $\pi$ can be  given in the form
\begin{equation}\label{planepi}
x_k=\beta_0-\beta_1x_1-\beta_2x_2-...-\beta_{k-1}x_{k-1}.
\end{equation}
Let us denote $\beta=(\beta_1,...,\beta_{k-1})$.
\begin{prop}\label{prop:directhyper}
Given a real number $\mu$, let a hyperplane \eqref{planepi} have the directional hyperplanar moment of inertia in direction $Cx_k$ equal to $\mu$. Then the coefficients $\beta_0, \beta_1,...,\beta_{k-1}$ of the hyperplane \eqref{planepi} satisfy the equation
\begin{equation}\label{coefeq}
\langle K_1\beta,\beta\rangle-2\langle b,\beta\rangle+m\beta_0^2+J_{kk}=\mu.
\end{equation}
By varying the moment of inertia $\mu$, the equations \eqref{coefeq} represent the family of homothetic ellipsoids in the space of parameters $\beta_0, \beta_1,...,\beta_{k-1}$.
\end{prop}
\begin{proof}
We calculate the directional hyperplanar moment of inertia of the hyperplane \eqref{planepi}:
\begin{equation}\label{dirconst}
\begin{aligned}
J_{\pi}^{x_k}&=\sum_{i=1}^Nm_id_i^2=\sum_{i=1}^Nm_i\big(x^{(i)}_k-\beta_0-\beta_1x^{(i)}_1-...-\beta_{k-1}x^{(i)}_{k-1}\big)^2\\
&=J_{kk}+m\beta_0^2+\beta_1^2J_{11}^2+...+\beta_{k-1}^2J_{k-1\ k-1}-2\sum_{i=1}^{k-1}\beta_iJ_{ik}+2\sum_{i,j=1\atop i<j}^{k-1}\beta_i\beta_jJ_{ij}\\
&=\langle K_1\beta,\beta\rangle-2\langle b,\beta\rangle+m\beta_0^2+J_{kk}.
\end{aligned}
\end{equation}
This gives a proof of the first part of the Proposition. We used here that $C$ is the centroid and consequently, $\sum_{i=1}^N m_ix_j^{(i)}=0$ for $j=1,...,k$.

In the space of parameters $\beta_0, \beta_1,...,\beta_{k-1}$, the equation \eqref{coefeq} defines a family of quadrics that depend on $\mu$.
Since $K_1$ is a positive definite matrix, this is a family of ellipsoids.

\end{proof}

\begin{prop}\label{prop:homotetic} All ellipsoids from the family \eqref{coefeq} have the same center at the point $(0,\hat{\beta})$, where $\hat{\beta}=K_1^{-1}b$ are the coordinates of the best fit hyperplane. The formula \eqref{coefeq} can be rewritten in the form
\begin{equation}\label{coefeq1}
\langle K_1(\beta-\hat{\beta}),\beta-\hat{\beta}\rangle=\mu+\langle K_1^{-1}b,b\rangle-m\beta_0^2-J_{kk}.
\end{equation}

\end{prop}
\begin{proof} We have
$$
\begin{aligned}
\mu&=\langle K_1\beta,\beta\rangle-2\langle b,\beta\rangle+m\beta_0^2+J_{kk}\\
&=\langle K_1(\beta-K_1^{-1}b,\beta-K_1^{-1}b\rangle-\langle K_1^{-1}b,b\rangle+m\beta_0^2+J_{kk}.
\end{aligned}
$$
By introducing $\hat{\beta}=K_1^{1}b$, one gets the formula \eqref{coefeq1}. The coordinates of the vector $\hat{\beta}$ give the best fit hyperplane obtained by least square method (see for example formula (3.6) from \cite{HTF}).

\end{proof}

Proposition \ref{prop:homotetic} shows that the intersection of the family \eqref{coefeq} with the hyperplane $\beta_0=0$ coincides with the family of homothetic ellipsoids of residuals \eqref{eq:residual}.

One can always choose coordinates in which $K_1$ is diagonal, and thus the formula \eqref{coefeq} takes a simpler form. The direction $w$ coincides with $Cx_k$. In the chosen coordinates all the centrifugal moments of inertia $J_{ij}$ for $i,j=1,...,k-1$ are equal to zero. In other words, the  hyperplanar operator of inertia at the point $C$ has the expression
$$
J_{C}=\left( \begin{matrix}
J_1&0&0&...&0&J_{1k}\\
0&J_2&0&...&0&J_{2k}\\
...&&&&&\\
0&0&0&...&J_{k-1}&J_{k-1k}\\
J_{1k}&J_{2k}&J_{3k}&...&J_{k-1k}&J_{k}\\
\end{matrix}
\right).
$$

Let us fix $J_{\pi}^{x_k}=\mu$. Then using that
$$
\det(J_C)=J_1J_2...J_{k-1}\big( J_k-\frac{J_{1k}}{J_1}-...-\frac{J_{k-1k}}{J_{k-1}}\big)
$$
one gets that the equation of the family of ellipsoids \eqref{coefeq} has the form

\begin{equation}\label{coefeqdiag}
J_1\Big(\beta_1-\frac{J_{1k}}{J_1}\Big)^2+...+J_{k-1}\Big(\beta_{k-1}-\frac{J_{k-1k}}{J_{k-1}}\Big)^2
+m\beta_0^2=\mu-\frac{\det(J_{C})}{J_1J_2...J_{k-1}}.
\end{equation}
In such chosen coordinates, the center of the ellipsoids is $(0, \frac{J_{1k}}{J_1},...,\frac{J_{k-1k}}{J_{k-1}})$.

A geometric approach to the lasso with use of Proposition \ref{prop:directhyper} and Proposition \ref{prop:homotetic} is presented in \cite{DG2024}.

 Now, as in the two-dimensional case, we consider the  following problem  for hyperplanes that contain a given point $P$ distinct from the center of masses: \emph{Find the hyperplane  that contains a given point $P$
and has the minimal directional hyperplanar moment in the direction $w$}.

Let $\pi$ and $\pi_1$ be two parallel hyperplanes that contain $C$ and $P$ respectively. We assume that  $\pi$ and $\pi_1$ are not orthogonal to $w$. The Huygens-Steiner theorem \eqref{eq:HSdirplan} gives:
$
J^w_{\pi_1}=J^w_{\pi}+m\hat{D}^2.
$
Since $\hat{D}^2=\frac{{D}^2}{\langle u_0,w_0\rangle^2}$  using \eqref{eq:pmidirect}, we get
$
J_{\pi_1}^w=\frac{J_\pi+m{D}^2}{\langle u_0,w_0\rangle^2}=\frac{J_{\pi_1}}{\langle u_0,w_0\rangle^2}=\frac{\langle J_Pu_0,u_0\rangle}{\langle u_0,w_0\rangle^2}.
$
Here $J_P$ is given by \eqref{JuP}. The last formula is an analog of \eqref{eq:pmidirect}. In statistics, the data ellipsoid is typically considered at the centroid. In mechanics ellipsoids of inertia are not exclusively considered at the center of masses. Transporting this important flexibility from mechanics to statistics, we consider the data  ellipsoid  at a given point. Let us recall that we described its principal hyperplanes in Theorem
\ref{th:principal1}. Here the confocal pencil of quadrics \eqref{kdimkonfokal} we constructed associated to the given system of points appears again.
The principal hyperplanes are tangent hyperplanes at the point $P$ of those quadrics from the confocal pencil \eqref{kdimkonfokal}, which contain $P$. We get
\begin{thm}\label{thm8.4}
 Given the data ellipsoid  at a point $P$
$
\mathcal E_P:\, \langle J^{-1}_P\textbf{x},\textbf{x}\rangle=1.
$
  Let the radius vector from the point $P$ in direction $w$ intersect the ellipsoid $\mathcal E_P$  at the point $W$. Let the tangent hyperplane to the ellipsoid $\mathcal E_P$ at $W$ be $\pi_W$.
The directional hyperplane of regression in the direction $w$, among the hyperplanes which contain the point $P$,  is parallel to the tangent hyperplane $\pi_W$.  In other words, the directional hyperplane of regression is the hyperplane conjugate to $PW$ with respect to $\mathcal E_P$.

\end{thm}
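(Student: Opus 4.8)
The plan is to follow the same variational scheme as in the proof of Theorem \ref{th:dirregcondk}, now anchored at the point $P$ rather than at the centroid $C$, with the inertia operator $J_P$ from \eqref{JuP} replacing $J_C$. The starting point is the analogue of \eqref{eq:pmidirect} established immediately before the statement: for a hyperplane through $P$ with unit normal $\mathbf{n}$ not orthogonal to $w$, the directional hyperplanar moment is
$$
J^w_{\pi_1}=\frac{\langle J_P\mathbf{n},\mathbf{n}\rangle}{\langle \mathbf{w}_0,\mathbf{n}\rangle^2}.
$$
Finding the directional hyperplane of regression therefore amounts to minimizing this Rayleigh-type quotient over unit normals $\mathbf{n}$.

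First I would compute the gradient with respect to $\mathbf{n}$. Since $J_P$ is symmetric, the quotient rule yields, exactly as in Theorem \ref{th:dirregcondk},
$$
\grad_{\mathbf{n}}J^w_{\pi_1}=\alpha J_P\mathbf{n}+\beta\mathbf{w}_0,\qquad \alpha=\frac{2}{\langle \mathbf{w}_0,\mathbf{n}\rangle^2}\ne 0 .
$$
Imposing the constraint $g(\mathbf{n})=\langle\mathbf{n},\mathbf{n}\rangle-1=0$ via a Lagrange multiplier and using that the quotient is homogeneous of degree zero, so that $\grad_{\mathbf{n}}J^w_{\pi_1}$ is orthogonal to $\mathbf{n}$ by Euler's relation, the stationarity condition forces $J_P\mathbf{n}$ and $\mathbf{w}_0$ to be collinear. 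Hence the normal of the extremal hyperplane is
$$
\hat n_w=J_P^{-1}\mathbf{w}_0 ,
$$
which is well defined because $J_P$ is positive definite.

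It remains to identify this direction geometrically. Writing the radius vector from $P$ in direction $w$ as $\mathbf{x}=t\mathbf{w}_0$ in coordinates centred at $P$, its intersection $W$ with $\mathcal E_P$ occurs at $t_*=\langle J_P^{-1}\mathbf{w}_0,\mathbf{w}_0\rangle^{-1/2}$. The tangent hyperplane $\pi_W$ to $\mathcal E_P:\ \langle J_P^{-1}\mathbf{x},\mathbf{x}\rangle=1$ at $W$ has normal proportional to the gradient $J_P^{-1}\mathbf{x}$ evaluated at $W$, namely $t_*J_P^{-1}\mathbf{w}_0$, hence proportional to $\hat n_w=J_P^{-1}\mathbf{w}_0$. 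Since the directional hyperplane of regression and $\pi_W$ share the same normal direction, they are parallel, which is the assertion.

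I do not expect a genuine obstacle, since the computation is the verbatim analogue of Theorem \ref{th:dirregcondk}. The only two points deserving care are: that the unique critical direction is indeed the minimizer, which follows because on each open hemisphere $\{\mathbf{n}:\langle\mathbf{w}_0,\mathbf{n}\rangle>0\}$ the positive quotient tends to $+\infty$ as $\langle\mathbf{w}_0,\mathbf{n}\rangle\to 0$ and has exactly one interior critical point; and the exclusion of the degenerate hyperplanes orthogonal to $w$, for which $\langle\mathbf{w}_0,\mathbf{n}\rangle=0$, already ruled out by hypothesis.
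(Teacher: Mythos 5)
Your proposal is correct and follows essentially the same route as the paper: the paper establishes the identity $J^w_{\pi_1}=\langle J_P\mathbf{n},\mathbf{n}\rangle/\langle \mathbf{w}_0,\mathbf{n}\rangle^2$ immediately before the statement and then invokes, verbatim, the Lagrange-multiplier argument of Theorem \ref{th:dirregcondk} with $J_C$ replaced by $J_P$, yielding $\hat n_w=J_P^{-1}\mathbf{w}_0$ as the normal of the tangent hyperplane to $\mathcal E_P$ at $W$. Your added remarks on why the critical point is the minimizer and on the degenerate case $\langle\mathbf{w}_0,\mathbf{n}\rangle=0$ only make the argument more complete.
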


\begin{exm}\label{ex:2ddirectional}  Let $N$ points with masses $m_1,...,m_N$ be given in the plane. Let us also fix a point $P$ in the plane.
Among the lines that contain the given point $P$, we will derive formulas for the directional line of regression in the direction $w_0=(w_x,w_y)$ for the given system of points in the plane, using the associated confocal pencil of conics \eqref{kdimkonfokal}. We will provide the answer in terms of the Jacobi elliptic coordinates of the point $P$ with respect to the associated confocal pencil of conics.

We use the notation from Example \ref{exam2dim}.  Denote by $C$ the centroid of the system of $N$ points. Denote $Cxy$ the principal coordinate system of the confocal pencil and $J_1$ and $J_2$ as before, denote the  principal planar moments of the inertia. Introduce $a_2$ such that
$J_1+ma_2^2=J_2.$
 The confocal pencil of conics associated to the system of points \eqref{kdimkonfokal} has the form
\begin{equation}\label{eq:pencil2ddirectiona}
\frac{x^2}{\alpha-\lambda}+\frac{y^2}{\beta-\lambda}=1,\quad \alpha=\frac{J_1}{m},\quad \beta=\frac{J_1}{m}-a_2^2.
\end{equation}

We calculated the
principal vectors $\tilde{\mathbf{n}}_{(1)}$ and $\tilde{\mathbf{n}}_{(2)}$ for the planar operator of inertia at the point $P$, see \eqref{eq:n1n2}. In the corresponding orthogonal basis $[\mathbf{n}_{(1)}, \mathbf{n}_{(2)}]$, the planar operator of inertia has a diagonal form
$J_P=\diag(J_{P_1}, J_{P_2}).$ The matrix of the change of bases $S$ is
$$
S=\left(\begin{matrix}
\frac{\alpha-\lambda_2}{\Delta_1 x_P}&\frac{\alpha-\lambda_1}{\Delta_2 x_P}\\
-\frac{\beta-\lambda_2}{\Delta_1 y_P}&-\frac{\beta-\lambda_1}{\Delta_2 y_P},
\end{matrix}\right),\, \text{where}$$
$$
\Delta_1=\sqrt{\frac{(\alpha-\beta)^2(\lambda_2-\lambda_1)}{(\alpha-\lambda_1)(\beta-\lambda_1)}},\
\Delta_2=\sqrt{\frac{(\alpha-\beta)^2(\lambda_2-\lambda_1)}{(\alpha-\lambda_2)(\lambda_2-\beta)}}.
$$
In the new basis, the coordinates of the vector $w_0$ are
$$
\tilde{w}_1=\frac{\alpha-\lambda_2}{\Delta_1 x_P}w_x+\frac{\alpha-\lambda_1}{\Delta_2 x_P}w_y,\quad
\tilde{w}_2=-\frac{\beta-\lambda_2}{\Delta_1 y_P}w_x-\frac{\beta-\lambda_1}{\Delta_2 y_P}w_y.
$$
From Theorem \ref{thm8.4} it follows that the regression line is parallel to $J_{P}^{-1}w_0$. Thus, the vector of the line of regression in the direction $w_0$ is
$
\mathbf{n}_{w}=S^{T}(
\frac{\tilde{w}_1}{J_{P1}}, \frac{\tilde{w}_2}{J_{P2}})^T.
$
 We get that $\mathbf{n}_{w}:=(n_1, n_2)^T$ is given by
$$
\begin{aligned}
n_1&=\frac{1}{(\alpha-\beta)(\lambda_2-\lambda_1)}
\Big[\Big(\frac{(\alpha-\lambda_2)(\beta-\lambda_1)}{J_{P_1}}+\frac{(\alpha-\lambda_1)(\lambda_2-\beta)}{J_{P_2}}\Big)w_x\\
&+\sqrt{(\alpha-\lambda_1)(\alpha-\lambda_2)(\beta-\lambda_1)(\lambda_2-\beta)}\Big(\frac{1}{J_{P_1}}-\frac{1}{J_{P_2}}\Big)w_y\Big]\\
n_2&=\frac{1}{(\alpha-\beta)(\lambda_2-\lambda_1)}
\Big[\sqrt{(\alpha-\lambda_1)(\alpha-\lambda_2)(\beta-\lambda_1)(\lambda_2-\beta)}\Big(\frac{1}{J_{P_1}}
-\frac{1}{J_{P_2}}\Big)w_x\\
&+\Big(\frac{(\alpha-\lambda_1)(\lambda_2-\beta)}{J_{P_1}}+\frac{(\beta-\lambda_1)(\alpha-\lambda_2)}{J_{P_2}}\Big)w_y\Big].
\end{aligned}
$$

 From Example \ref{exam2dim} we know that the formula \eqref{eq:mul} gives the connection of the extremal values  $J_{P_1}$ and $J_{P_2}$ of the principal planar inertia operator at the  point $P$ with the Jacobi elliptic coordinates $\lambda_1$ and $\lambda_2$ of the point $P$, associated with the pencil of confocal conics:
$
J_{P_1}=2J_{1}-m\lambda_1,\quad J_{P_2}=2J_{1}-m\lambda_2.
$
The coordinate transformation between the Cartesian coordinates and the Jacobi elliptic coordinates also gives $x^2_P$ and $y^2_P$
in terms of the Jacobi elliptic coordinates $\lambda_1$ and $\lambda_2$ of the point $P$, associated with the confocal pencil \eqref{eq:pencil2ddirectiona}:
$$
x^2_P=\frac{(\alpha-\lambda_1)(\alpha-\lambda_2)}{\alpha-\beta},\quad y^2_P=\frac{(\beta-\lambda_1)(\beta-\lambda_2)}{\beta-\alpha}.
$$
Thus we get the formula for the line of regression in the direction $w_0$ under the restriction that it contains a given point $P$ in terms
of the Jacobi elliptic coordinates of the point $P$ with respect to the confocal pencil of conics \eqref{eq:pencil2ddirectiona} associated with the given system of points.

The line of the best fit passing through the point $P$ is
$
y-y_P=\kappa(x-x_P), \quad \kappa= -{n_1}/{n_2}.
$
\end{exm}

\begin{exm}\label{ex:pressure} Predicting pressure from the boiling point of water. See \cite{DeSch}, \cite{For}.
The results from experiments
that were trying to obtain a method for estimating altitude were presented in \cite{For}. A formula
is available for altitude in terms of barometric pressure, but it was difficult to carry
a barometer to high altitudes in the XIX century. However, it might be easy for travelers
to carry a thermometer and measure the boiling point of water. Table \ref{tab:water}
contains the measured barometric pressures and boiling points of water from $17$ experiments.
\begin{table}[h!]
  \begin{center}
    \caption{17 Forbes' observations}
    \label{tab:water}
    \begin{tabular}{c|c|c|c|c|c|c}
    \hline
      j&\textbf{1} & \textbf{2} & \textbf{3} & \textbf{4}&\textbf{5} & \textbf{6}\\ 
      \hline
      $x_j$ & 194.5 & 194.3 & 197.9 & 198.4 & 199.4 & 199.9\\
      $y_j$ & 20.79  & 20.79 & 22.40 & 22.67  & 23.15 & 23.35\\
     \hline
    \end{tabular}

  \begin{tabular}{c|c|c|c|c|c|c}
    \hline
      j & \textbf{7} & \textbf{8}&\textbf{9} & \textbf{10}& \textbf{11} & \textbf{12}\\ 
      \hline
      $x_j$ & 200.9 & 201.1 & 201.4 & 201.3 & 203.6 & 204.6 \\ 
      $y_j$ & 23.89  & 23.99 & 24.02 & 24.01 & 25.14 & 26.57  \\ 
     \hline
    \end{tabular}

    \begin{tabular}{c|c|c|c|c|c}
    \hline
      j&\textbf{13} & \textbf{14} & \textbf{15} & \textbf{16}&\textbf{17}\\ 
      \hline
      $x_j$ & 209.5 & 208.6 & 210.7 & 211.9 & 212.2\\ 
      $y_j$ & 28.49  & 27.76 & 29.04 & 29.88  & 30.06 \\ 
     \hline
    \end{tabular}
  \end{center}
\end{table}

One can use the method of least squares to fit a linear relationship between
boiling point and pressure. Let $y_i$ be the pressure for one of Forbes’ observations,
and let $x_i$ be the corresponding boiling point for $i = 1,\dots, 17$. Using the data in
Table \ref{tab:water}, we can compute the covariance matrix. The coordinates of the centroid are $(\bar{x},\bar{y})=(202.9529, 25.05882)$. The components of the hyperplanar inertia operator at the centroid are $J_{xx}=530.78235, J_{yy}=145.93778, J_{xy}=277.54206$. The principal moments of inertia are $J_{1}=0.63839, J_{2}=676.08147$. Then we  compute the line $u_C$ of the best fit as the least-squares line. The intercept and slope of the line $u_C$ are, respectively,
$\hat \beta_0 = -81.06373$ and $\hat\beta_1 = 0.5228$.
 Using formula \eqref{eq:pmidirect}, one calculates the directional moment of inertia $J^w_{u_C}=0.813143014$.

A traveler is
interested in the barometric pressure when the boiling point of water is $201.5$ degrees.
Suppose that this traveler would like to know whether the pressure is $24.5$. Thus, the traveler might wish to test the null hypothesis
$H_0: \beta_0 + 201.5\beta_1 = 24.5$,  versus $H_1:  \beta_0 + 201.5\beta_1 \ne 24.5$.

\begin{figure}[h] \centering
\psset{unit =0.4cm, linewidth=0.5\pslinewidth}
\begin{pspicture}(190.5,19)(228,33)
\psaxes[Ox=195, Oy=20, labelFontSize=\scriptscriptstyle, ticksize=2pt]{->}(195,20)(212,30)
\psdots[dotstyle=Basterisk](194.5,20.79)(194.3,20.79)(197.9,22.4)(198.4,22.67)(199.4,23.15)(199.9, 23.35)(200.9,23.89)(201.1,23.99)(201.4,24.02)(201.3,24.01)(203.6,25.14)(204.6,26.57)(209.5,28.49)(208.6,27.76)(210.7,29.04)(211.9,29.88)(212.2,30.06)
\psdot[dotstyle=*](202.9259,25.05882)
\psdot[dotstyle=*](201.5,24.5)\rput(201.3,24.6){$P$}
\psline(194,20.643983)(212, 29.8984174)
\psparametricplot[plotpoints=200, linecolor=blue, linewidth=0.8pt, fillstyle=none]{-180}{180}{202.9529 3.010289 t cos  mul add 1.370534 t sin mul add 25.05882 5.75007 neg t cos mul add 0.717505 t sin mul add}
\psparametricplot[plotpoints=200, linecolor=red, linewidth=0.8pt, fillstyle=none]{-1}{1}{202.9529 0.08054 t COSH mul add 5.58223 t SINH mul add  25.05882 0.15384 neg t COSH mul add 2.9224495 t SINH mul add}
\psparametricplot[plotpoints=200, linecolor=red, linewidth=0.8pt, fillstyle=none]{-1}{1}{202.9529 0.08054 neg t COSH mul add 5.58223 t SINH mul add  25.05882 0.15384 t COSH mul add 2.9224495 t SINH mul add}
\end{pspicture}
\caption{Example \ref{ex:2ddirectional}: The line of restricted regression and confocal quadrics through the point P.}
\label{fig:restlinreg}
\end{figure}
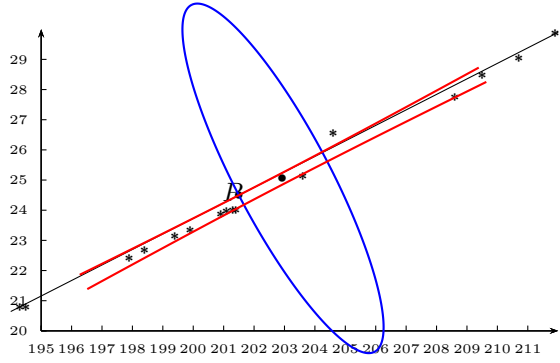

Here we are now interested in the line of regression of $y$ on $x$, i.e. in the vertical direction, under the restriction that it contains the point $P(201.5, 24.5)$.  Now, we introduce $(\tilde{x}, \tilde{y})$ as the principal coordinates at the centroid. Using the coordinate transformation from the original to the principal coordinates, we calculate the coordinates of the point $P$ in the principal system: $\tilde{x}_P=0.08698$, $\tilde{y}_P=-1.68554$.
Similarly, the vertical vector $(0,1)$ in the new coordinate system becomes ${\mathbf{w}}_{0}=(-0.88594, 0.46381)$.
Our pencil of conics associated to the data is defined with $\alpha=0.037552$,
$\beta=-39.69441$:
$$
\frac{\tilde{x}^2}{0.037552-\lambda}+\frac{\tilde{y}^2}{-39.69441-\lambda}=1.
$$
The Jacobi elliptic coordinates of the point $P$ are the solutions of the quadratic equation \eqref{eq:kvadratna}: $\lambda_{1P}=-42.536$, $\lambda_{2P}=0.03049$,. The principal hyperplanar moments of inertia at the point $P$ are
$
J_{1_P}=2J_1-m\lambda_{2_P}=0.75841 \quad J_{2_P}=2J_1-m\lambda_{1_P}=724.3882
$
In the principal coordinates $\mathbf{n}_{w}=\tilde{S}^T(\frac{\tilde{w}_1}{J_{P_1}}, \frac{\tilde{w}_2}{J_{P_2}})^T$.

The equation of the line of the best fit in the sense of least squares passing through the point $P$ is
$
u_P: \tilde{\tilde{y}}=1189.499993\tilde{\tilde{x}}-211.1392.
$
 Using formula \eqref{eq:pmidirect}, we calculate the corresponding directional moment of inertia $J^w_{u_P}=1.455877$.
In the original coordinates, the equation of the line of the best fit $u_P$ is
$
y=0.5141352x-79.0982450.
$

\
The F-statistic
$F=\frac{RSS_1-RSS_2}{p_2-p_1}/\frac{RSS_2}{N-p_2},$
for $N=17$, $p_2=2$, $p_1=1$
is 11.85647, while p-value is $P(F_{1, 15} > 11.85647) = 0.003621119.$
The null hypothesis should be rejected, leading to the conclusion that pressure is not 24.5 (see Fig. \ref{fig:restlinreg}).

\begin{remark}\label{rem:Forbs}
Our calculations from this example do not coincide with the calculations from\- \cite{DeSch}. For example, for unconstrained problem they got
$\hat \beta_0 = -81.049$ while the correct value is $\hat \beta_0 = -81.06373$, as above.
\end{remark}
\

It is interesting to note that in \cite{CR0}, following \cite{Wei}, they discuss this experiment from the point of view of errors in measurement models.

\end{exm}

\subsection*{Acknowledgements}
We are grateful to Maxim Arnold for indicating the reference \cite{Ga} and for interesting discussions. We are also grateful to Pankaj Choudhary, Sam Efromovich, and Frank Konietschke for interesting discussions.  We also express our sincere gratitude to the referees  and editors for their very helpful and constructive remarks and suggestions.

This research has been partially supported by Mathematical Institute of the Serbian Academy of Sciences and Arts, the Science Fund of Serbia grant Integrability and Extremal Problems in Mechanics, Geometry and
Combinatorics, MEGIC, Grant No. 7744592 and the Ministry for Education, Science, and Technological Development of Serbia and the Simons Foundation grant no. 854861.




\begin{thebibliography}{99}


\bibitem{Ad1}
Adcock, R. J. (1877). Note on the method of least squares. \emph{Analyst} \textbf{4}, 183--184.

\bibitem{Ad2}
Adcock. R. J. (1878). A problem in least squares. \emph{Analyst}, \textbf{5}, 53--54.

\bibitem{AB} Akritas, M., Bershady, M. (1996).
Linear Regression for Astronomical Data with Measurement Errors and Intrinsic Scatter.
\emph{The Astrophysical Journal}, 470, 706--714.


\bibitem{And}
Andreson, T. W. (1958). \emph{An Introduction to Multivariate Statistical Analysis} John Wiley and Sons.

\bibitem{ArnoldMMM}
Arnold, V. I. (1989). \emph{Mathematical methods of classical mechanics}, Graduate Texts in Mathematics, {\bf 60}, 2nd edition, Springer-Verlag, New York, pp. xvi+508


\bibitem{AFB}
Arvidsson, D., Fridolfsson J., Börjesson M. (2019). Measurement of physical activity in clinical practice using accelerometers
\emph{Journal of Internal Medicine}, 286,137-153.

\bibitem{BGSY}
Ben Gillen, B., Snowberg, E. and Yariv, L. (2019).
Experimenting with Measurement Error: Techniques with Applications to the Caltech Cohort Study.
Journal of Political Economy, 127,


\bibitem{CR0}
Carroll, R. J.,  Ruppert, D. (1996). The use and misuse of orthogonal regression estimation in linear errors-in-variables models. \emph{The American Statistician}, 50, 1--6.

\bibitem{CR}
Carroll, R.,  Ruppert, D.,  Stefanski L.,  Crainiceanu, C. (2006). \emph{Measurement Error in Nonlinear Models,
A Modern Perspective} Second Edition, Monographs on Statistics and Applied Probability 105, Taylor and Francis.

\bibitem{CB} Casella G.,  Berger, R. L. (2001). \emph{Statistical Inference}, Second Edition, Duxbury Advanced Series, Duxbury.

\bibitem{CDE} Cohen, J. E.,  D'Eustachio, P. and  Edelman, G. M. (1977). The specific antigenbinding
cell populations of individual fetal mouse spleens: Repertoire composition, size and genetic control. \emph{J. Exp. Med.}, \textbf{146},  394--411.

\bibitem{CD}Cohen, J. E. and D'Eustachio, P. (1978). An affine linear model for the relation between
two sets of frequency counts. Response to a query, \emph{Biometrics} 34,514--516.


\bibitem{CMM}
Courel-Ibanez, J., Martinez-Cava, A., Moran-Navarro, R. et al. (2019). Reproducibility and Repeatability of Five Different Technologies for Bar Velocity Measurement in Resistance Training.  \emph{Ann Biomed Eng} 47, 1523?1538.


\bibitem{Cr}Cram\'er, H. (1946). \emph{Mathematical Methods of Statistics,} Princeton: Princeton University Press.

\bibitem{DeSch}DeGroot, M.,  Schervish, M. (2012). \emph{Probability and Statistics}, Fourth Edition, Adison-Wesley.

\bibitem{Dem} Dempster, A. (1969). \emph{Elements of continuous multivariate analysis}, Addison-Wesley.

\bibitem{DG2022a} Dragovi\'c, V. Gaji\'c, B. (2023) Points with rotational ellipsoids of inertia, envelopes of hyperplanes which equally fit the system of points in $R^k$, and ellipsoidal billiards, Physica D: Nonlinear Phenomena, Volume 451, 133776


\bibitem{DG2024a} Dragovi\'c, V. Gaji\'c, B. (2024) Geometric approach and closed formulae for the Lasso, in preparation.

    \bibitem{DG2024b} Dragovi\'c, V. Gaji\'c, B. (2024) Analogues of Lasso in orthogonal regression, in preparation.


\bibitem{DG2022} Dragovi\'c, V., Gaji\'c, B. (2023). Supplement to ``Orthogonal and Linear Regressions and Pencils of Confocal Quadrics'', DOI:.

\bibitem{DR} Dragovi\'c, V. Radnovi\'c, M. (2011). \emph{Poncelet Porisms and Beyond: Integrable Billiards, Hyperelliptic Jacobians and Pencils of Quadrics}, Frontiers in Mathematics, Basel: Springer.

\bibitem{DR0} Dragovi\'c, V., Radnovi\'c, M., (2008). \emph{ Hyperelliptic Jacobians as billiard algebra of pencils of quadrics: Beyond Poncelet porisms}, Advances in Mathematics, Vol. 219, p. 1577--1607.


\bibitem{FHJ} T. B. Fomby, R. C. Hill,  S. R. Johnson (1984). \emph{Advanced Econometric Methods}, Springer Science + Business Media.

\bibitem{For} Forbes, J. D. (1857). \emph{Further experiments and remarks on the measurement of heights by the boiling point of water}, Transactions of the Royal Society of Edinburgh, 21: 135-143.

 \bibitem{FMF}
    Friendly, M., Monette, G., Fox, J. (2013). Elliptic insights: understanding statistical methods through elliptical geometry. \emph{Statist. Sci.} \textbf{28}, no. 1, 1--39.

\bibitem{Full0}  Fuller, W. A. (1978). An affine linear model for the relation between two sets of
frequency counts: Response to query. \emph{Biometrics} \textbf{34}, 517--521.

\bibitem{Full}  Fuller, W. A. (1987). \emph{Measurement Error Models}, John Willey and Sons.

\bibitem{Gal}
 Galton, F. (1886). Regression towards mediocrity in hereditary stature. \emph{Journal of the Anthropological
Institute}, \textbf{15}, pp. 246--263.

\bibitem{Ga} Gardiner, M. (1893). On ''Confocal Quadrics of Moments of Inertia'' pertaining to all Planes in Space, and Loci and Envelopes of Straight Lines whose ''Moments of Inertia'' are Constants, \emph{Proceedings of the Royal Society of Victoria},  \textbf{5}, 200--208.


\bibitem{ML} Goodfellow, I.,  Bengio, Y.,  Courville, A. (2016). \emph{Deep learning} The MIT Press.

\bibitem{HTF} Hastie, T.,   Tibshirani, R.,  Friedman, J. (2009) \emph{The elements of statistical learning} Second Edition, Springer


\bibitem{Jac} Jacobi, C. (1865) \emph{Vorlesungen \"uber dynamik}, (in German) English Translation: (2009) \emph{Jacobi's Lectures on Dynamics}, Springer, TRIM 51.


\bibitem{Jur} Jurdjevic, V. (2023) \emph{Integrable Systems: In the Footprints of the Greats}, Mathematics 2023, 11, 1063.


\bibitem{Ke} Kelly, B. (2007). Some Aspects of Measurement Error in Linear Regression of Astronomical Data.
\emph{The Astrophysical Journal}, 665,  1489--1506

\bibitem{MOW} Meijer, E., Oczkowski, E. Wansbeek, T. (2021). How measurement error affects inference in linear regression. \emph{Empir Econ} 60, 131--155




\bibitem{Pir} Pearson, K. (1901). On Lines and Planes of Closest Fit to Systems of Points in Space,
\emph{Phil. Mag.} \textbf{2}, 559--572.

\bibitem{Ro} Rogers, A. J. (2013). Concentration Ellipsoids, Their Planes of
Support, and the Linear Regression Model, \emph{Econometric Reviews}, 32:2, 220-243, DOI:10.1080/07474938.2011.608055.


\bibitem{SL} Seber, G., Lee, A. (2003) \emph{Linear Regression Analysis} Second Edition, John Wiley and Sons.


\bibitem {Sus} Suslov, G.K. (1900) \emph{Fundamentals of Analytical Mechanics, Vol.1}, Kiev, [in Russian].


\bibitem{Wei} Weisberg, S. (1985) \emph{Applied Linear Regression}, Second Edition, John Willey and Sons.

\end{thebibliography}


\end{document}